\newtheorem{theorem}{Theorem}[section]
\newtheorem{lemma}[theorem]{Lemma}
\newtheorem{corollary}[theorem]{Corollary}
\newtheorem{proposition}[theorem]{Proposition}
\theoremstyle{definition}
\theoremstyle{remark}
\newtheorem{remark}[theorem]{Remark}
\numberwithin{equation}{section}
\begin{document}

\title[Second Derivative of Navier-Stokes]{Second derivatives estimate of suitable solutions to the 3D Navier-Stokes equations}

\author[A. Vasseur]{Alexis Vasseur}
\address{Department of Mathematics, The University of Texas at Austin, 2515 Speedway Stop C1200 Austin, TX 78712, USA}
\email{vasseur@math.utexas.edu}

\author[J. Yang]{Jincheng Yang}
\address{Department of Mathematics, The University of Texas at Austin, 2515 Speedway Stop C1200 Austin, TX 78712, USA}
\email{jcyang@math.utexas.edu}

\subjclass[2020]{76D05, 35Q30}

\thanks{\textit{Acknowledgement}. A.~Vasseur was partially supported by the NSF grant: DMS 1614918.}

\date{\today}

\begin{abstract}
We study the second spatial derivatives of suitable weak solutions to the incompressible Navier-Stokes equations in dimension three. We show that it is locally $L ^{\frac43, q}$ for any $q > \frac43$, which improves from the current result $L ^{\frac43, \infty}$. Similar improvements in Lorentz space are also obtained for higher derivatives of the vorticity for smooth solutions. We use a blow-up technique to obtain nonlinear bounds compatible with the scaling. The local study works on the vorticity equation and uses De Giorgi iteration. In this local study, we can obtain any regularity of the vorticity without any a priori knowledge of the pressure. The local-to-global step uses a recently constructed maximal function for transport equations.

\bigskip

\noindent \textsc{Keywords.} Navier-Stokes Equation, Partial Regularity, Blow-up Techniques, De Giorgi Method, Maximal Function, Lorentz Space.
\end{abstract}

\maketitle
\tableofcontents

\newcommand{\grad}{\nabla}
\renewcommand{\div}{\operatorname{div}}
\newcommand{\R}{\mathbb{R}}
\newcommand{\Rd}{\R ^d}
\newcommand{\vp}{\varphi}
\renewcommand*{\d}{\mathop{\kern0pt\mathrm{d}}\!{}}
\newcommand{\e}{\varepsilon}
\newcommand{\rad}{1}
\newcommand{\br}{B _{\rad}}
\newcommand{\be}{B _{\e}}
\newcommand{\Qe}{Q _\e}
\newcommand{\bex}{\be (x)}
\newcommand{\RpRt}{\R _+ \times \R ^3}
\renewcommand{\L}[2]{L ^#1 (#2)}
\newcommand{\tx}{(t, x)}

\newcommand{\uuhalf}{\frac{|u|^2}{2}}
\newcommand{\vvhalf}{\frac{|v|^2}{2}}
\newcommand{\BB}{\mathbf{B}}
\newcommand{\BL}{\mathbf{L}}
\newcommand{\BW}{\mathbf{W}}
\newcommand{\BC}{\mathbf{C}}
\newcommand{\Cu}{\BC _u}
\newcommand{\Cv}{\BC _v}
\newcommand{\RR}{\mathbf{R}}
\newcommand{\loc}{\mathrm{loc}}
\newcommand{\Lip}{\mathrm{Lip}}
\newcommand{\diam}{\mathrm{diam}}

\newcommand{\pt}{\partial _t}
\newcommand{\La}{\Delta}
\newcommand{\curl}{\operatorname{curl}}
\renewcommand{\div}{\operatorname{div}}
\newcommand{\supp}{\operatorname{supp}}
\newcommand{\Id}{\operatorname{Id}}
\newcommand{\inv}{^{-1}}
\newcommand{\tensor}{\otimes}
\newcommand{\cross}{\times}
\newcommand{\vps}{\vp ^\sharp}
\newcommand{\Pcurl}{\mathbb{P} _{\curl}}
\newcommand{\Pgrad}{\mathbb{P} _{\grad}}
\newcommand{\pq}[1]{L ^{p _#1} _t L ^{q _#1} _x}
\newcommand{\Lp}[1]{L ^{p _#1} _t}
\newcommand{\Lq}[1]{L ^{q _#1} _x}
\newcommand{\LLLH}{L ^\infty _t L ^2 _x \cap L ^2 _t \dot H ^1 _x}
\newcommand{\LH}{L ^2 _t \dot H ^1 _x}
\newcommand{\ind}[1]{\mathbf1 _{#1}}
\newcommand{\inds}[1]{\ind{\{#1\}}}

\newcommand{\psis}[1]{\varrho ^{\sharp #1} _0}
\newcommand{\psisn}[1]{\varrho ^{\sharp #1} _n}

\newcommand{\vr}{\varrho}
\newcommand{\vs}{\varsigma}
\newcommand{\vrn}{\vr _n}
\newcommand{\vsn}{\vs _n}

\newcommand{\BMO}{\textit{BMO}}

\newcommand{\vpe}{\vp _\e}
\newcommand{\ue}{u _\e}
\newcommand{\Xe}{X _\e}
\newcommand{\mm}{\mathcal{M}}
\newcommand{\mmu}{\mm \left(|\grad u|\right)}
\newcommand{\mmut}{\mm \left(|\grad u (t)|\right)}
\newcommand{\mmus}{\mm \left(|\grad u (s)|\right)}
\newcommand{\betxo}{\bar \e _{\toxo}}
\newcommand{\mmq}{\mm _\mathcal{Q}}

\newcommand{\intR}[1]{\int _{\R ^#1}}
\newcommand{\cci}{C _c ^\infty}

\section{Introduction}
We study the three dimensional incompressible Navier-Stokes equations,
\begin{align}
    \label{eqn:ns}
    \pt u + u \cdot \grad u + \grad P = \La u, \qquad \div u = 0.
\end{align}
Here $u: (0, T) \times \R ^3 \to \R ^3$ and $P: (0, T) \times \R ^3 \to \R$ represent the velocity field and the pressure field of a fluid in $\R ^3$, within a finite or infinite timespan of length $T$. Initial condition
\begin{align*}
    u (0, \cdot) = u _0 \in L ^2 (\R ^3)
\end{align*}
is given by a divergence-free velocity profile $u _0$ of finite energy.

Leray (\cite{Leray1934}) and Hopf (\cite{Hopf1951}) proved the existence of weak solutions for all time. They constructed solutions $u \in C _w (0, \infty; L ^2 (\R ^3)) \cap L ^2 (0, \infty; \dot H ^1 (\R ^3))$ corresponding to each aforementioned initial value, and satisfying \eqref{eqn:ns} in the sense of distribution. A weak solution is called a Leray-Hopf solution if it satisfies energy inequality
\begin{align*}
    \frac12 \| u (t) \| _{L ^2 (\R ^3)} ^2 + 
    \| \grad u \| _{L ^2 ((0, t) \times \R ^3)} ^2 \le 
    \frac12 \| u _0 \| _{L ^2 (\R ^3)} ^2 
\end{align*}
for every $t > 0$. Since then, much work has been developed in regard to the uniqueness and regularity of weak solutions. Nonuniqueness of weak solutions was proven very recently by Buckmaster and Vicol (\cite{Buckmaster2019}) using convex integration scheme. However, the question of the uniqueness of Leray-Hopf solutions still remains open. The uniqueness is related with the regularity of solutions by the Lady\v zenskaya-Prodi-Serrin criteria (\cite{Ladyzenskaya1957,Prodi1959,Serrin1962,Serrin1963,Fabes1972}): if the velocity belongs to any space interpolating $L ^2 _t L ^\infty _x$ and $L ^\infty _t L ^3 _x$ then it is actually smooth, hence unique. The endpoint case $L ^\infty _t L ^3 _x$ comes much later by Iskauriaza, Ser\"egin and Shverak \cite{Iskauriaza2003}. These spaces require $\frac16$ higher spatial integrability than the energy space provides, which is $\mathcal E = L ^\infty _t L ^2 _x \cap L ^2 _t \dot H ^1 _x$.

At the level of energy space, Scheffer (\cite{Scheffer1976,Scheffer1977,Scheffer1978,Scheffer1980}) began to study the partial regularity for a class of Leray-Hopf solutions, called suitable weak solutions. These solutions exist globally and satisfy the following local energy inequality,
\begin{align*}
    \pt \frac{|u|^2}{2} + \div\left(u\left(
        \frac{|u|^2}{2} + P
    \right)\right) + |\grad u| ^2 \le \La \frac{|u|^2}{2}.
\end{align*}
Scheffer showed the singular set, at which the solution is unbounded nearby, has time-space Hausdorff dimension at most $\frac53$. This result was later improved by Caffarelli, Kohn and Nirenberg in \cite{Caffarelli1982} (see also \cite{Lin1998,Vasseur2007}), where they showed the 1-dimensional Hausdorff measure of the singular set is zero. We will investigate the regularity of suitable weak solutions. In the periodic setting, Constantin (\cite{Constantin1990}) constructed suitable weak solutions whose second derivatives have space-time integrability $L ^{\frac43 - \e}$ for any $\e > 0$, provided the initial vorticities are bounded measures. This was improved by Lions (\cite{Lions1996}) to a slightly better space $L ^{\frac43, \infty}$, a Lorentz space which corresponds to weak $L ^\frac43$ space. These estimates are extended to higher derivatives of smooth solutions by one of the authors and Choi (\cite{Vasseur2010,Choi2014}) using blow-up arguments: $L ^{p, \infty} _{\loc}$ space-time boundedness for $(-\La) ^\frac\alpha2 \grad ^n u$, where $p = \frac4{n + \alpha + 1}$, $n \ge 1$, $0 \le \alpha < 2$. They also constructed suitable weak solutions satisfying these bounds for $n + \alpha < 3$.

The aim of this paper is to improve these regularity results in Lorentz space. The main result is the following. Note that the estimate does not rely on the size of the pressure.

\begin{theorem}
\label{thm:main}
Suppose we have a smooth solution $u$ to the Navier-Stokes equations in $(0, T) \times \R ^3$ for some $0 < T \le \infty$ with smooth divergence free initial data $u _0 \in L ^2$. Then for any integer $n \ge 0$, for any real number $q > 1$, the vorticity $\omega = \curl u$ satisfies
\begin{align}
\label{eqn:main-thm}
    \left\|
        |\grad ^n \omega| ^\frac{4}{n+2} \inds{|\grad ^n \omega| ^\frac{4}{n+2} > C _n t^{-2}}
    \right\| _{L ^{1, q}((0, T) \times \R ^3)} \le C _{q, n} \| u _0 \| _{L ^2} ^2
\end{align}
for some constant $C _n$ depending on $n$ and $C _{q, n}$ depending only on $q$ and $n$, uniform in $T$. The above estimate \eqref{eqn:main-thm} also holds for suitable weak solutions with only $L ^2$ divergence free initial data in the case $n = 1$.
\end{theorem}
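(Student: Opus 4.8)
The plan is the one sketched in the abstract: a local, pressure-free regularity estimate for the vorticity obtained by De Giorgi iteration, a blow-up rescaling that makes the estimate scaling-critical, and a local-to-global step built on a maximal function adapted to the transport by $u$. The only global a priori information available is the energy inequality $\|\grad u\|_{L^2((0,T)\times\R^3)}^2 \le \tfrac12\|u_0\|_{L^2}^2$, which is also what produces $\|u_0\|_{L^2}^2$ on the right of \eqref{eqn:main-thm}; for $n=0$ the estimate is immediate from $\||\omega|^2\|_{L^1((0,T)\times\R^3)} = \|\grad u\|_{L^2}^2 \le \tfrac12\|u_0\|_{L^2}^2$ and $L^1\hookrightarrow L^{1,q}$, so I assume $n\ge 1$. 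For the local estimate I would work on the vorticity equation $\pt\omega + u\cdot\grad\omega - \omega\cdot\grad u = \La\omega$, which carries no pressure: since $|\omega\cdot\grad u| \lesssim |\grad u|^2$ and $\grad u$ is recovered from $\omega$ by a Calder\'on--Zygmund operator, the forcing $\omega\cdot\grad u$ is controlled over a parabolic cylinder $Q_r(z_0)$ by a scaling-invariant local energy of $u$ built only from $u$ and $\grad u$, while the drift $u$ is absorbed using the same quantity and the fact that $\div u = 0$. Running De Giorgi iteration on the level sets of $|\omega|^2$, I expect to prove that there are $\e _*>0$ and constants $C_n$ such that: if $u$ is a suitable weak solution on $Q_2(z_0)\subset(0,T)\times\R^3$ whose scaling-invariant local energy over $Q_2(z_0)$ is $\le\e _*$, then $|\omega|\le C_0$ on $Q_1(z_0)$; for smooth $u$ a bootstrap on the same equation upgrades this to $|\grad^n\omega|\le C_n$ on $Q_1(z_0)$ for all $n$, and for $n=1$ the local energy inequality by itself suffices. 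The subtlety compared with the classical setting is that $u$, as a drift, sits at a supercritical integrability level for the parabolic scaling, so the iteration must be closed using the Navier--Stokes scaling and the local energy inequality rather than any $L ^p _t L ^q _x$ bound on $u$.

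\emph{Blow-up.} Under $u\mapsto\lambda u(\lambda^2 t,\lambda x)$ one has $\grad^n\omega\mapsto\lambda^{n+2}\grad^n\omega$, so both sides of \eqref{eqn:main-thm} scale like $\lambda^{-1}$ and $|\grad^n\omega|^{\frac{4}{n+2}}$ is exactly the power of $\grad^n\omega$ with the matching weight. Blowing up the local estimate around $z_0$ (rescaling, passing to a limit, and invoking the local regularity) gives: for a.e.\ $z_0=(t_0,x_0)$ and every $\rho\le\sqrt{t_0}$, if the scaling-invariant local energy of $u$ over $Q_\sigma(z_0)$ stays $\le\e _*$ for all $\sigma\le\rho$, then $|\grad^n\omega(z_0)|^{\frac{4}{n+2}}\le C_n\rho^{-4}$. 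Fixing the constant $C_n$ in \eqref{eqn:main-thm} from this, a point $z_0$ lies in the truncated set $\{|\grad^n\omega|^{\frac{4}{n+2}}>C_n t^{-2}\}$ only if that local energy exceeds $\e _*$ at some scale $\rho<\sqrt{t_0}$; letting $r _*(z_0)$ be the first scale at which it equals $\e _*$ — necessarily $r _*(z_0)<\sqrt{t_0}$ — one gets the pointwise bound $|\grad^n\omega(z_0)|^{\frac{4}{n+2}}\,\inds{|\grad^n\omega|^{\frac{4}{n+2}}>C_n t^{-2}}\le C_n\, r _*(z_0)^{-4}$. It therefore remains to bound $\big\| r _*^{-4}\,\inds{r _*<\sqrt{t_0}}\big\|_{L^{1,q}((0,T)\times\R^3)}$ by $C_{q,n}\|u_0\|_{L^2}^2$.

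\emph{Local to global; the main obstacle.} This last estimate is the crux, and it is where the improvement over the literature lives. The naive argument — cover $\{r _*<\rho\}$ by a Vitali subfamily of cylinders $Q_{r _*(z_i)}(z_i)$, on each of which $\int_{Q_{r _*(z_i)}(z_i)}|\grad u|^2\gtrsim\e _* r _*(z_i)$, and combine disjointness with $\|\grad u\|_{L^2}^2\le\tfrac12\|u_0\|_{L^2}^2$ — yields only $|\{r _*<\rho\}|\lesssim\|u_0\|_{L^2}^2\,\rho^4$, which is exactly the weak-type $L^{1,\infty}$ bound of \cite{Lions1996}; given nothing beyond the dissipation bound this covering estimate is sharp, so the gain to $L^{1,q}$ cannot come from it. It must exploit the transport structure: since $\omega$ is carried by the flow of $u$, the natural object is a maximal function $\mm$ of $|\grad u|^2$ formed along the characteristics of $u$, and $r _*(z_0)^{-4}$ is governed by the first level at which $\mm$ attains $\e _*$. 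Using the recently constructed maximal function for transport equations and its mapping properties — which for divergence-free $u$ with $\grad u\in L^2_{t,x}$ go beyond the elementary weak-$(1,1)$ bound, equivalently providing a decay estimate to the effect that the local energy is much smaller than $\e _*$ at most points — I would refine the sublevel estimate so that the dyadic pieces $\{r _*\sim 2^{-k}\}$ carry $L^1$-mass of $|\grad^n\omega|^{\frac{4}{n+2}}$ that is not merely uniformly bounded in $k$ (which would recover only $L^{1,\infty}$) but summable after the weighting that defines the $L^{1,q}$ quasinorm. Turning the transport structure into this quantitative gain is the hard part of the proof.

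\emph{Conclusion.} Feeding the refined sublevel estimate into the dyadic description of the Lorentz quasinorm, together with $|\grad^n\omega|^{\frac{4}{n+2}}\,\inds{|\grad^n\omega|^{\frac{4}{n+2}}>C_nt^{-2}}\le C_n\, r _*^{-4}$, yields $\big\| |\grad^n\omega|^{\frac{4}{n+2}}\,\inds{|\grad^n\omega|^{\frac{4}{n+2}}>C_n t^{-2}}\big\|_{L^{1,q}}\le C_{q,n}\|u_0\|_{L^2}^2$ with constants independent of $T$; by the embedding $L^{1,q_1}\hookrightarrow L^{1,q_2}$ for $q_1\le q_2$ it suffices to treat $q$ close to $1$. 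Finally, since for $n=1$ only the local energy inequality entered the argument, the estimate extends to suitable weak solutions with $L^2$ divergence-free initial data, which is the last assertion of the theorem.
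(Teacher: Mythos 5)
Your proposal has the right skeleton — a pressure-free local estimate on the vorticity equation proved by De Giorgi iteration, a blow-up scaled so that $|\grad^n\omega|^{4/(n+2)}$ matches the $L^1$ weight, and a local-to-global step driven by a maximal function along trajectories of $u$. But you explicitly leave the decisive step unresolved: you write that ``turning the transport structure into this quantitative gain is the hard part of the proof,'' and you misattribute where that gain comes from. You suggest the $\mathcal Q$-maximal function has some mapping property ``beyond the elementary weak-$(1,1)$ bound'' that yields a refined decay of the sublevel sets $\{r_*\sim 2^{-k}\}$. It does not: the $\mathcal Q$-maximal function of the paper is bounded $L^1 \to L^{1,\infty}$ and $L^p \to L^p$ for $p>1$, exactly like the classical Hardy--Littlewood operator, and feeding it $|\grad u|^2 \in L^1$ can only ever return $L^{1,\infty}$, precisely the obstruction you yourself identify. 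Your claim that this barrier is ``sharp given nothing beyond the dissipation bound'' is also wrong, because the paper closes the argument using only the dissipation bound $\grad u \in L^2_{t,x}$.

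The missing idea is to change the \emph{input} to the maximal function, not its mapping properties. One uses a subquadratic pivot: for $p<2$, $|\grad u|^p \in L^{2/p}$ with $2/p>1$, so $\mmq(\mm(|\grad u|)^p)^{2/p} \in L^1$ with norm controlled by $\|\grad u\|_{L^2}^2$. The catch is that the local theorem needs quadratic information to control the nonlinearity, so the pivot is actually $\delta^{-2\nu}\bigl(\fint |\grad u|^p\bigr)^{2/p} + \delta\fint|\grad u|^2$ uniformly in $\delta>0$; feeding this through the maximal function produces a pointwise bound $f \lesssim \delta^{-2\nu}f_1 + \delta f_2$ with $f_1\in L^1$ and $f_2\in L^{1,\infty}$, and the real-interpolation lemma for Lorentz spaces (Lemma~\ref{lem:lorentz}) then places $f$ in $L^{1,1+2\nu}$. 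Letting $\nu\to 0$ gives every $q>1$. Closing the \emph{local} theorem under such mixed subquadratic/quadratic smallness is itself nontrivial — it requires writing $u\cdot\grad u = \omega\times u + \grad(\cdot)$ and pairing $L^{2+}_tL^{2-}_x$ on $\omega$ against $L^{2-}_tL^{6-}_x$ on $u$, plus the auxiliary localized field $v=-\curl\vps\La^{-1}\vp\omega$ to avoid all nonlocal pressure terms — and your sketch, which invokes a nonlocal Calder\'on--Zygmund recovery of $\grad u$ from $\omega$, does not address this either.
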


This theorem gives the following improvement on the second derivatives.
\begin{corollary}
\label{cor:L43}
Let $u$ be a suitable weak solution in $(0, \infty) \times \R ^3$ with initial data $u _0 \in L ^2$. Then for any $q > \frac43$, $K \subset \subset (0, \infty) \times \R ^3$, there exists a constant $C _{q, K}$ depending on $q$ and $K$ such that the following holds,
\begin{align*}
    \left\|
        \grad ^2 u
    \right\| _{L ^{\frac43, q} (K)} \le C _{q, K} \left(
        \| u _0 \| _{L ^2} ^\frac32 + 1
    \right).
\end{align*}
\end{corollary}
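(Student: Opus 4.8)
The plan is to deduce the corollary from Theorem \ref{thm:main} with $n=1$ by (i) upgrading from vorticity to full Hessian, (ii) absorbing the truncation near $t=0$, and (iii) converting the global Lorentz bound with exponent $1$ into a local bound with exponent $\frac43$. For step (i), recall that for a divergence-free field $u$ the Hessian $\grad^2 u$ and $\grad\omega$ are related by a Calder\'on--Zygmund operator (in space), so one expects $\|\grad^2 u\|$ to be controlled by $\|\grad\omega\|$ in the relevant spaces. Since we ultimately want only a \emph{local} estimate on $K$, I would fix a cutoff and use that $\grad^2 u$ on $K$ is, up to lower-order terms coming from the cutoff (which involve only $\grad u$ and $u$, controlled by the energy $\|u_0\|_{L^2}^2$ via the energy inequality together with the smoothing of Navier-Stokes away from $t=0$), a singular integral of $\grad \omega$; boundedness of such operators on Lorentz spaces $L^{\frac43,q}$ for $1<q<\infty$ then transfers the bound.

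For step (ii), note that on $K\subset\subset(0,\infty)\times\R^3$ we have $t\ge t_0>0$ for some $t_0$, so $C_1 t^{-2}\le C_1 t_0^{-2}=:M$ on $K$. Hence on $K$ we can split $|\grad\omega|^{4/3} = |\grad\omega|^{4/3}\inds{|\grad\omega|^{4/3}>M} + |\grad\omega|^{4/3}\inds{|\grad\omega|^{4/3}\le M}$. The first piece is dominated pointwise on $K$ by the truncated quantity appearing in \eqref{eqn:main-thm}, hence lies in $L^{1,q}(K)$ with norm $\lesssim \|u_0\|_{L^2}^2$; equivalently $|\grad\omega|$ on the set $\{|\grad\omega|^{4/3}>M\}\cap K$ lies in $L^{4/3,\,4q/3}(K)$ with the corresponding bound (using the identity $\||f|^{a}\|_{L^{p,r}}=\|f\|_{L^{ap,ar}}^{a}$ and choosing $q$ in Theorem~\ref{thm:main} to match the target exponent $\frac{4q}{3}>1$, which is allowed since $q>\frac43$). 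The second piece is bounded by $M^{3/4}$ pointwise, so it lies in $L^\infty(K)\subset L^{4/3,4q/3}(K)$ with norm $\lesssim M^{3/4}|K|^{3/(4q)}$, a constant depending only on $K$ and $q$. Adding the two pieces and using the quasi-triangle inequality in $L^{4/3,4q/3}(K)$ gives $\|\grad\omega\|_{L^{4/3,4q/3}(K)}\lesssim_{q,K} \|u_0\|_{L^2}^2 + 1$.

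Now combine: by step (i), $\|\grad^2 u\|_{L^{4/3,4q/3}(K)} \lesssim_{q,K} \|\grad\omega\|_{L^{4/3,4q/3}(\tilde K)} + (\text{energy terms})$ for a slightly larger $\tilde K\subset\subset(0,\infty)\times\R^3$, and by step (ii) the right side is $\lesssim_{q,K}\|u_0\|_{L^2}^2+1$. Finally, to reach the stated conclusion with \emph{second} Lorentz index $q$ (rather than $4q/3$) and right-hand side $\|u_0\|_{L^2}^{3/2}+1$, I would simply relabel: given the target $q>\frac43$, the bound just proved gives $\grad^2 u\in L^{4/3,r}(K)$ for every $r>1$, in particular for $r=q$. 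For the power $\frac32$ on $\|u_0\|_{L^2}$: on the compact set $K$ we may use $\|u_0\|_{L^2}^2 \le \|u_0\|_{L^2}^{3/2}\cdot\|u_0\|_{L^2}^{1/2}$ and, when $\|u_0\|_{L^2}$ is large the estimate is not improved, but when it is bounded the extra factor is harmless — more precisely one interpolates the $L^{4/3,\infty}$ bound of Lions (scaling exponent giving the $\frac32$ power, which is the critical homogeneity) against the new higher-integrability bound, or one simply notes $a^2 \lesssim a^{3/2}+1$ fails for large $a$ so instead absorbs via Young's inequality after the local-to-global maximal function estimate; I expect the cleanest route is to retrace the blow-up normalization, where the natural scaling-invariant bound is exactly $\|u_0\|_{L^2}^{3/2}$ and the truncation contributes the additive $+1$.

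The main obstacle I anticipate is \textbf{step (i)}, transferring from $\grad\omega$ to $\grad^2 u$ cleanly on a local set: the Biot--Savart/Calder\'on--Zygmund reconstruction is global in space, so a naive cutoff introduces commutator terms, and one must check these are controlled by the energy norm uniformly — this is where interior regularity of suitable weak solutions away from $t=0$ (hence the need for $K\subset\subset(0,\infty)\times\R^3$) genuinely enters, and where the boundedness of Calder\'on--Zygmund operators on the Lorentz scale $L^{p,q}$, $1<p,q<\infty$, must be invoked carefully. A secondary subtlety is matching the Lorentz indices and the precise power $\frac32$, which I expect requires either interpolation with Lions' $L^{4/3,\infty}$ estimate or a direct appeal to the scaling structure of the blow-up argument underlying Theorem~\ref{thm:main} rather than a soft deduction from its statement alone.
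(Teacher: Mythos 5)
Your overall strategy coincides with the paper's: cut off in space, reduce $\grad^2 u$ to a Calder\'on--Zygmund (Riesz) transform in Lorentz space, invoke Theorem~\ref{thm:main} with $n=1$ for the leading term, absorb the $t^{-2}$ truncation because $t\ge t_0>0$ on $K$, and control the lower-order remainders by the energy norm. But there is a concrete gap in how you track the $\|u_0\|_{L^2}^{3/2}$: you never actually \emph{need} interpolation against Lions' $L^{4/3,\infty}$ bound, nor a retrace of the blow-up normalization, and as you yourself observe, $a^2\lesssim a^{3/2}+1$ is false for large $a$, so something must be wrong with arriving at $\|u_0\|_{L^2}^2+1$. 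The error is that you wrote the identity $\||f|^a\|_{L^{p,r}}=\|f\|_{L^{ap,ar}}^a$ and applied it to the exponents but not to the right-hand side of the estimate. Taking $a=\frac43$, Theorem~\ref{thm:main} (with its parameter $q'=\frac{3q}{4}>1$, legitimate since $q>\frac43$) gives
\[
\|\grad\omega\,\inds{|\grad\omega|>C_1^{3/4}t^{-3/2}}\|_{L^{4/3,q}}
=\big\||\grad\omega|^{4/3}\inds{\cdots}\big\|_{L^{1,q'}}^{3/4}
\le\big(C_{q'}\|u_0\|_{L^2}^2\big)^{3/4}\lesssim_q\|u_0\|_{L^2}^{3/2},
\]
and that is exactly where the $\frac32$ power comes from; the $+1$ is supplied by the $M$-level piece (constant on $K$). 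Once you carry the $\tfrac34$ power through, the mysterious mismatch disappears.

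On your ``main obstacle'' (i), the worry about commutators requiring interior regularity is unfounded, and the paper's route avoids it cleanly: pick $\rho\in C_c^\infty$ with $\rho\equiv 1$ near $K$, note $\La(\rho u)=\rho\,\La u+2\grad\rho\cdot\grad u+(\La\rho)\,u$, so the commutator terms involve only $u$ and $\grad u$ and are bounded in $L^{4/3}(Q)\subset L^{4/3,q}(Q)$ by $\|u_0\|_{L^2}$ straight from the Leray--Hopf energy inequality (no partial-regularity input at all). Since $u$ is divergence free, $\La u=-\curl\omega$, so the leading term is handled by the computation above, and then $\grad^2(\rho u)=-R_{ij}\La(\rho u)$ together with Lemma~\ref{lem:riesz-lorentz} gives $\|\grad^2 u\|_{L^{4/3,q}(K)}\le\|\grad^2(\rho u)\|_{L^{4/3,q}}\lesssim\|\La(\rho u)\|_{L^{4/3,q}}$. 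This is simpler than cutting off $\grad\omega$, because the cutoff commutes trivially with $\La$ up to lower order, whereas cutting off $\grad\omega$ directly and inverting Biot--Savart would indeed produce the nonlocal commutators you were concerned about.
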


Let us explain the main ideas of the proof. Similar as previous work on higher derivatives, the proof is also based on blow-up techniques. In particular, we blow up the equation along a trajectory, using the scaling symmetry and the Galilean invariance of the Navier-Stokes equations. That is, if we fix an initial time $t _0$ and move the frame of reference along some $X (t)$, and zoom in into $\e$ scale, then it is easy to verify that $\tilde u (s, y)$, $\tilde P (s, y)$ defined by
\begin{align}
    \label{eqn:tildeu}
    \frac1\e \tilde u \left(\frac{t - t _0}{\e ^2}, \frac{x - X (t)}{\e} \right) &:= u (t, x) - \dot X(t) \\
    \notag
    \frac1{\e ^2} \tilde P \left(\frac{t - t _0}{\e ^2}, \frac{x - X (t)}{\e} \right) &:= P(t, x) + x \cdot \ddot X(t)
\end{align}
also satisfy the Navier-Stokes equation
\begin{align*}
    \partial _s \tilde u + \tilde u \cdot \grad \tilde u + \grad \tilde P = \La \tilde u, \qquad \div \tilde u = 0.
\end{align*}
We develop the following local theorem for $\tilde u$ and $\tilde P$. Note that it needs nothing from the pressure. Denote $B _r \subset \R ^3$ to be a ball centered at the origin with radius $r$, and $Q _r = (-r ^2, 0) \times B _r \subset \R ^4$ to be a space-time cylinder. 

\begin{theorem}[Local Theorem]
\label{thm:local}
There exists a universal constant $\eta _1 > 0$, such that for any suitable weak solution $u$ to the Navier-Stokes equations in $(-4, 0) \cross \R ^3$ satisfying
\begin{align}
    \label{eqn:mean-zero-velocity}
    &\int _{B _1} u (t, x) \phi (x) \d x = 0 \qquad \text{\upshape a.e. } t \in (-4, 0), \\
    \label{eqn:compensated-integrability}
    &\| \grad u \| _{\pq1 (Q _2)} + \| \omega \| _{\pq2 (Q _2)} \le \eta _1,
\end{align}
where $\phi \in \cci (B _1)$ is a non-negative function with $\int \phi = 1$, $\omega = \curl u$ is the vorticity, $\frac43 \le p _1 \le \infty$, $1 \le p _2 \le \infty$, $1 \le q _1, q _2 < 3$ satisfying
\begin{align*}
    \frac1{p _1} + \frac1{p _2} < 1, \qquad 
    \frac1{q _1} + \frac1{q _2} \le \frac76,
\end{align*}
then for any integer $n \ge 0$, we have
\begin{align*}
    \| \grad ^n \omega \| _{L ^\infty (Q _{8 ^{-n-2}})} \le C _n
\end{align*}
for some constant $C _n$ depending only on $n$.
\end{theorem}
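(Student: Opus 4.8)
The plan is to discard the pressure entirely, work only with the vorticity equation $\pt \omega + u \cdot \grad \omega - \omega \cdot \grad u = \La \omega$, obtain a local $L ^\infty$ bound for $\omega$ by a De~Giorgi iteration, and then bootstrap every higher derivative by linear parabolic regularity. First I would establish --- after a standard regularization, since a suitable weak solution need not a priori have $\grad \omega \in L ^2 _{\loc}$ --- a local energy inequality on the level sets of $|\omega|$. For $k \ge 0$ and a parabolic cutoff $\eta$ supported in some $Q _r$ with $r \le 1$, testing the vorticity equation against $\eta ^2 (|\omega| - k) _+ \frac{\omega}{|\omega|}$, using a Kato-type inequality to handle the vector structure and integrating the transport term by parts via $\div u = 0$, gives schematically
\begin{align*}
    \sup _t \int \eta ^2 (|\omega| - k) _+ ^2 + \iint \left| \grad \bigl( \eta (|\omega| - k) _+ \bigr) \right| ^2
    &\lesssim \iint \bigl( |\grad \eta| ^2 + |\eta \pt \eta| \bigr) (|\omega| - k) _+ ^2 \\
    &\quad {}+ \iint |u| \, \eta \, |\grad \eta| \, (|\omega| - k) _+ ^2 \\
    &\quad {}+ \iint \eta ^2 |\grad u| \bigl( (|\omega| - k) _+ ^2 + k (|\omega| - k) _+ \bigr) \inds{|\omega| > k}.
\end{align*}
The transport velocity appearing here is controlled purely locally by $\grad u$: assumption \eqref{eqn:mean-zero-velocity} together with the Poincar\'e--Sobolev inequality on $\br$ gives $\| u \| _{\Lp1 L ^{q _1 ^*} _x (Q _1)} \lesssim \| \grad u \| _{\pq1 (Q _2)} \le \eta _1$ with $\frac1{q _1 ^*} = \frac1{q _1} - \frac13$ finite since $q _1 < 3$. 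No information about the pressure, and no global bound on $u$, is ever used.

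The De~Giorgi iteration then runs over an increasing sequence of levels $k _j \uparrow 1$ and a decreasing sequence of cylinders shrinking to some universal $Q _{r _0}$, aiming to prove $\| \omega \| _{L ^\infty (Q _{r _0})} \le 1$. Two ingredients make it close. First, the assumption $\| \omega \| _{\pq2 (Q _2)} \le \eta _1$ controls every level-set measure $|\{ |\omega| > k \} \cap Q _r|$ by a positive power of $\eta _1$, and makes the bottom-level energy $U _{1/2} := \sup _t \int \eta ^2 (|\omega| - \frac12) _+ ^2 + \iint | \grad ( \eta (|\omega| - \frac12) _+ ) | ^2$ bounded by a positive power of $\eta _1$, which is the seed of the iteration. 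Second, by distributing the $\omega$-factors in the transport and stretching terms between \eqref{eqn:compensated-integrability} and the energy-class control of the truncations $(|\omega| - k) _+$ furnished by the left-hand side of the energy inequality, and by absorbing the small factors $\| \grad u \| _{\pq1}, \| u \| _{\Lp1 L ^{q _1 ^*} _x} \le \eta _1$, these two nonlinear terms are bounded by a positive power of $\eta _1$ times a fraction of the left-hand side plus lower-order contributions controlled by the level gaps. This last point is exactly where the admissible range $\frac1{p _1} + \frac1{p _2} < 1$, $\frac1{q _1} + \frac1{q _2} \le \frac76$, $q _i < 3$, $p _1 \ge \frac43$ is used: it is engineered so that these products land in a parabolic space strictly subcritical for the Navier--Stokes scaling, leaving room for the absorption. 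One then obtains a De~Giorgi recursion of the form $U _{k _{j+1}} \le C\, b ^j\, \eta _1 ^\theta\, U _{k _j} ^{1 + \delta}$, which forces $U _{k _j} \to 0$ and hence $\| \omega \| _{L ^\infty (Q _{r _0})} \le 1$. This is the case $n = 0$, with room to spare between $r _0$ and $8 ^{-2}$.

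For $n \ge 1$ this $L ^\infty$ bound feeds a classical parabolic bootstrap on the vorticity equation. From $\omega \in L ^\infty (Q _{r _0})$, the Caccioppoli inequality gives $\grad \omega \in L ^2 (Q _{r _0 / 2})$, while elliptic regularity through Biot--Savart --- with the given $\grad u \in \pq1$ absorbing the nonlocal tail --- gives $u \in L ^\infty (Q _{r _0 / 2})$ and $\grad u \in L ^p _t L ^q _x (Q _{r _0 / 2})$ for all finite $p, q$. Hence the right-hand side $- u \cdot \grad \omega + \omega \cdot \grad u$ of the vorticity equation lies in $L ^2 (Q _{r _0 / 2})$, and interior parabolic $L ^2$-regularity upgrades $\omega$ to $\grad ^2 \omega \in L ^2 (Q _{r _0 / 4})$. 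Iterating --- differentiate the vorticity equation $n$ times, keep the transport term $u \cdot \grad (\grad ^n \omega)$ as is, estimate the lower-order products $\grad ^j u \, \grad ^{n - j + 1} \omega$ by Sobolev embedding and the bounds already obtained, and apply interior parabolic $L ^p$ and finally Schauder estimates on a cylinder shrunk by a fixed geometric factor at each stage --- yields $\| \grad ^n \omega \| _{L ^\infty (Q _{8 ^{-n-2}})} \le C _n$; the base $8$ is a comfortable choice that absorbs the finitely many shrinkings needed per derivative.

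The heart of the proof, and its main obstacle, is the combination in the second step: arranging the nonlinear terms --- the transport term after the divergence-free integration by parts and, above all, the vortex-stretching term $\omega \cdot \grad u$ --- to be \emph{simultaneously} strictly subcritical for the scaling (which is what pins down the admissible ranges of $p _1, p _2, q _1, q _2$) and equipped with a positive power of $\eta _1$ that can be absorbed into the dissipation or the level-gap terms of the De~Giorgi scheme; one must be careful never to place an $\omega$-factor in a critical or supercritical space. A secondary, more technical difficulty is that $\omega$ is vector-valued, so a Kato-type inequality is needed to run the iteration on the scalar quantity $|\omega|$ while keeping the stretching term under control, and the level-set energy inequality must be justified for an arbitrary suitable weak solution by regularization. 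I expect no real difficulty in the bootstrap of the third step, which is routine once $\omega \in L ^\infty$ locally.
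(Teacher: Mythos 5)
Your proposal diverges fundamentally from the paper's proof, and the divergence is exactly where the paper's main technical work lies. You attempt to run a De~Giorgi iteration directly on the truncations $(|\omega| - k)_+$ of the scalar vorticity, driven by a level-set energy inequality obtained from testing the vorticity equation. The paper explicitly avoids this route and instead introduces the auxiliary field
\begin{align*}
    v := -\curl \vps \La \inv \vp \omega,
\end{align*}
which is divergence free, compactly supported, and sits at the scaling level of $u$ rather than $\omega$; the De~Giorgi iteration is performed on $v$, not on $\omega$, and the $L^\infty$ bound on $\omega$ is then bootstrapped only \emph{after} $v$ is known to be bounded. This is not a cosmetic difference: the direct-on-$\omega$ scheme does not close under the stated hypotheses.

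The fatal obstruction is the vortex-stretching term. Your level-set energy inequality contains
\begin{align*}
    \iint \eta ^2 \, |\grad u| \, (|\omega| - k) _+ ^2 \d x \d t,
\end{align*}
and the iteration needs this to be dominated by a superlinear power of the truncated energy $U _k$. From $(|\omega| - k)_+ \in L^\infty_t L^2_x \cap L^2_t L^6_x$ one only gets $(|\omega| - k)_+^2 \in L^{p'}_t L^{q'}_x$ along the interpolation line $\frac2{p'} + \frac3{q'} = 3$. Pairing with $\grad u \in \pq1$ by H\"older therefore requires $\frac2{p_1} + \frac3{q_1} \le 2$, i.e. subcritical $\grad u$ in the Prodi--Serrin sense. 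But the theorem allows, and the application in Lemma~\ref{lem:pivot} actually \emph{uses}, $p_1 = q_1 = p$ with $\frac{11}6 < p < 2$, for which $\frac2{p_1} + \frac3{q_1} = \frac5p > \frac52 > 2$. So the stretching term is strictly supercritical in the regime of interest; your claim that the admissible parameter range ``is engineered so that these products land in a parabolic space strictly subcritical'' is the opposite of the truth. Prepending an $\eta_1$-factor does not help: a De~Giorgi recursion $U_{k+1} \le C^k \eta_1^\theta U_k^\beta$ with $\beta \le 1$ does not drive $U_k$ to zero. The structural reason the paper's scheme \emph{does} close is that after passing to $v$, the stretching term appears as $\omega \cross v$ in the equation for $v$, and $v \cdot (\omega \cross v) = 0$ identically: the dangerous term is killed by antisymmetry rather than by scaling. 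The remaining nonlinearity $\grad \RR(u \tensor v)$ is in divergence form and is handled exactly as the pressure-transport term in CKN, with $v$ now in the $u$-scaling slot where the energy space is subcritical.

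There is a second, independent gap. You invoke a ``standard regularization'' to justify the level-set energy inequality for $|\omega|$ for a suitable weak solution that is not a priori known to have $\grad \omega \in L^2_{\loc}$. This is not standard and is not addressable by mollification alone, since the stretching nonlinearity does not commute with mollifiers and you have no compactness in $\grad \omega$ to pass to the limit. The paper avoids the problem entirely: because $v$ sits at the scaling level of $u$, the suitability (local energy inequality) for $v$, equation \eqref{local-inequality}, can be \emph{inherited} from the suitability of $u$; the Appendix is devoted to this derivation (via the four cross-pairings \eqref{eqn:sum-1}--\eqref{eqn:sum-4}), and it uses in an essential way that $w = \vp u - v$ is smooth inside $B_1$. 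Without this step, there is no rigorous starting point for any De~Giorgi iteration here; the paper's own Section~3 argument for the $L^\infty_t L^1_x$ bound on $\omega$ (Lemma~\ref{lem:pivot}) is the most one can extract from the vorticity equation at the $|\omega|$ level without further structure, and it is an $L^1$-in-space inequality, not the $L^2$-based truncated energy inequality your iteration requires. A related point: since $q_2$ can be strictly less than $2$ (and is, in the application), the hypothesis $\|\omega\|_{\pq2} \le \eta_1$ does not even give a finite $L^2$-based seed $U_0$ for an iteration on $\omega$.

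The third step of your proposal (parabolic bootstrap once $\omega \in L^\infty_{\loc}$) is closer in spirit to the paper's Section~6, though the paper is more careful: to control the transport velocity $u$ locally from local data, it systematically uses the decomposition $u = v + w$ with $w$ harmonic in $B_1$, and the bootstrap is phrased as $\omega^{3/4}$ first gaining energy-space regularity, then $\omega$, then higher derivatives inductively. That this last stage is ``routine'' is correct only once the previous two stages --- which your proposal does not establish --- are in hand.
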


Let us illustrate the ideas of how to go from this local theorem towards the main result. We want to choose a ``pivot quantity'', blow up near a point, and use this quantity to control $\grad ^n \omega$. When we patch the local results together, we will obtain a nonlinear bound with the same scaling as the pivot quantity, so we want the pivot quantity to have the best possible scaling. The ideal pivot quantities would be $\int |\grad u| ^2 \d x \d t$ and $\int |\grad ^2 P| \d x \d t$. $\int |u| ^\frac{10}3 \d x \d t$ has a worse scaling and should not be used. However, we still need to control the flux in the local theorem, so we want to take out the mean velocity and control $u$ by $\grad u$ using Poincar\'e's inequality.

In order to take out the mean velocity, we choose $X (t)$ to be the trajectory of the mollified flow so that \eqref{eqn:mean-zero-velocity} can be realized. Notice that a cylinder $Q _r$ in the local $(s, y)$ coordinate will be transformed into a ``skewed cylinder'' growing along $X (t)$ in the global $(t, x)$ coordinate. One of the authors recently constructed a maximal function $\mmq$ associated with these cylinders (\cite{Yang2020}), which serves as a bridge between the local theorem and the global result, and is one of the main reasons for the improvement in this paper. The idea is, if locally the vorticity gradient can be controlled in $L ^\infty$ by the integral of something in the skewed cylinder, and the integral in a skewed cylinder can be controlled by the maximal function $\mmq$, then vorticity gradient is pointwisely bounded by the maximal function. 

If one uses $\int |\grad u| ^2 \d x \d t$ and $\int |\grad ^2 P| \d x \d t$ as the pivot quantity, then unfortunately the best possible outcome would just be an $L ^{1, \infty}$ bound, as obtained in \cite{Yang2020}. The reason is, the maximal function is bounded on $L ^p$ for $p > 1$, but for $p = 1$ it is only bounded from $L ^1$ to $L ^{1, \infty}$. Unfortunately $|\grad u| ^2$ and $|\grad ^2 P|$ are both $L ^1$ quantities, so $\mmq \left(|\grad u| ^2 + |\grad ^2 P|\right)$ is only $L ^{1, \infty}$. We need two things to improve from $L ^{1, \infty}$: replace $\int |\grad u| ^2$ by $\int |\grad u| ^p$, and drop the pressure $\grad ^2 P$. 

Suppose we could use $(\int |\grad u| ^p \d x \d t) ^\frac2p$ as the pivot quantity for some $p < 2$, then we can majorize it by $\mmq \left(|\grad u| ^p\right) ^\frac2p \in L ^1$, since $\frac2p>1$ and $\mmq$ is bounded in $L ^\frac2p$. However, this poses significant difficulties in the local theorem. The nonlinear term $u \cdot \grad u$ is quadratic, and if we only have a subquadratic integrability to begin with, we cannot treat this quadratic transport term as a source term because it is not integrable. Observe that what we lack of is the temporal integrability rather than the spatial one: if $p$ is slightly smaller than two, than $u \cdot \grad u$ is still $L ^{\frac32-}$ in space, but $L ^{1-}$ in time. To overcome this difficulty, we write $u \cdot \grad u$ as $\omega \cross u$ up to a gradient term, and put $L ^{2-} _t L^{6-} _x$ on $u$ and $L ^{2+} _t L ^{2-} _x$ on $\omega$. We compensate the lower integrability term by pairing with a higher integrability term to make $\omega \cross u$ integrable. $L ^{2+} _t L ^{2-} _x$ of $\omega$ can be interpolated between $L ^{2-} _t L^{2-} _x$ and $L ^\infty _t L ^1 _x$, while the latter is controlled by $L ^2 _{t, x}$ of $\grad u$. Since $L ^{2+} _t L ^{2-} _x$ is closer to $L ^{2-} _t L^{2-} _x$ than to $L ^\infty _t L ^1 _x$, the pivot quantity that we use is actually $\delta ^{-\nu} \| \grad u \| ^2 _{L ^p} + \delta \| \grad u \| ^2 _{L ^2}$ for $\nu$ close to 0. By using more subquadratic integrability and a tiny bit of the quadratic one, we can complete the task by interpolation. That is why we obtain $L ^{1, q}$ in the end: it interpolates $L ^1$ bound from $\| \grad u\| _{L ^p}$ and $L ^{1, \infty}$ bound from $\| \grad u \| _{L ^2}$. Unfortunately we still miss the endpoint $L ^1$.

The second task is more subtle and technical. Without any information on the pressure, we don't have any control on the nonlocal effect. However, the role of the pressure is not important at the vorticity level: if we take the curl of the Navier-Stokes equation, the pressure will disappear and we are left with the vorticity equation involving only local quantities:
\begin{align}
    \label{eqn:vorticity}
    \pt \omega + u \cdot \grad \omega - \omega \cdot \grad u = \La \omega.
\end{align}
Inspired by Chamorro, Lemari\'e-Rieusset and Mayoufi (\cite{Chamorro2018}), we introduce a new velocity variable $v = -\curl \vps \La \inv \vp \omega$ using only local information of vorticity ($\vp$ and $\vps$ are spatial cut-off functions), and this helps us to prove the local theorem. This is another main reason for the improvement in this paper. Consequently, the bounds we obtain in the end is on the vorticity $\omega$ rather than on the velocity $u$.

This paper is organized as follows. In the preliminary Section 2 we introduce the analysis tools to the reader. We show how to rigorously derive the main results from the local theorem in Section 3, and then deal with technicalities of the local theorem in the later sections. The proof of the local theorem consists of three parts. Section 4 introduces the new variables $v$, and shows the smallness of $v$ in the energy space. Then we use De Giorgi iteration argument in Section 5 to prove boundedness of $v$. Finally, we inductively bound $\omega$ and all its higher derivatives in Section 6.

\section{Preliminary}

In this section, we introduce a few tools that we are going to use in the paper, including the maximal function, Lorentz space, and Helmholtz decomposition. 

\subsection{Maximal Function associated with Skewed Cylinders} 
This is recently developed for incompressible flows in \cite{Yang2020}. We quote useful results here without proof.

Suppose $u \in L ^p (0, T; \dot W ^{1, p} (\R ^3; \R ^3))$ is a vector field in $\R ^3$. Fix $\phi \in C _c ^\infty (B _1)$ to be a nonnegative function with $\int \phi = 1$ through out the paper. For $\e > 0$ define $\phi _\e (x) = \e ^{-3} \phi(-x/\e)$, and let $u _\e (t, \cdot) = u (t, \cdot) * \phi _\e$ be the mollified velocity. For a fixed $(t, x)$ we let $X (s)$ solve the following initial value problem,
\begin{align*}
    \begin{cases}
        \dot X (s) = u _\e (s, X (s)), \\
        X (t) = x.
    \end{cases}
\end{align*}
The skewed parabolic cylinder $Q _\e (t, x)$ is then defined to be
\begin{align}
\label{eqn:def-Qe}
    Q _\e (t, x) := \left\lbrace
        (t + \e ^2 s, X (t) + \e y) : -9 \le s \le 0, y \in B _3
    \right\rbrace.
\end{align}
We use $\mm$ to denote the spatial Hardy-Littlewood maximal function, which is defined by
\begin{align*}
    \mm (f)(t, x) = \sup _{r > 0} \fint _{B _r(x)} |f(t, y)| \d y.
\end{align*}
Then we construct the space-time maximal function adapted to the flow.

\begin{theorem}[$\mathcal Q$-Maximal Function]
\label{thm:maximal-function}
There exists a universal constant $\eta _0$ such that the following is true. We say $Q _\e (t, x)$ is admissible if $Q _\e (t, x) \subset (0, T) \times \R ^3$ and 
\begin{align}
    \label{eqn:admissibility}
    \e ^2 \fint _{Q _\e (t, x)} \mm (|\grad u|) \d x \d t \le \eta _0.
\end{align}
Define the maximal function 
\begin{align*}
    \mmq (f) (t, x) := \sup _{\e > 0} \left\lbrace 
        \fint _{Q _\e (t, x)} |f(s, y)| \d s \d y: Q _\e (t, x) \text{ is admissible}
    \right\rbrace.
\end{align*}
If $u$ is divergence free and $\mm (|\grad u|) \in L ^q$ for some $1 \le q \le \infty$, then $\mmq$ is bounded from $L ^1 ((0, T) \times \R ^3)$ to $L ^{1, \infty} ((0, T) \times \R ^3)$ and from $L ^p ((0, T) \times \R ^3)$ to itself for any $p > 1$ with norm depending on $p$.
\end{theorem}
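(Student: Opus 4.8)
The plan is the classical one for maximal inequalities: establish the endpoint weak-type $(1,1)$ bound, observe the trivial $L^\infty$ bound, and then obtain $L^p \to L^p$ for every $1 < p < \infty$ by Marcinkiewicz interpolation. The $L^\infty$ bound needs nothing, since $\fint_{Q _\e (t,x)} |f| \le \|f\|_{L^\infty}$ for every admissible $Q_\e(t,x)$, so $\|\mmq f\|_{L^\infty} \le \|f\|_{L^\infty}$. The hypothesis $\mm(|\grad u|) \in L^q$ enters only qualitatively: combined with $u \in L^p _t \dot W ^{1,p} _x$ it makes $u_\e(s,\cdot) = u(s,\cdot) * \phi_\e$ smooth for a.e.\ $s$, so that the ODE $\dot X(s) = u_\e(s,X(s))$ has a well-defined flow, the skewed cylinders $Q_\e(t,x)$ make sense, and for a.e.\ $(t,x)$ all sufficiently small $Q_\e(t,x)$ are admissible (by Lebesgue differentiation along these cylinders).

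The real content is the weak-type $(1,1)$ estimate, which I would deduce from a Vitali/Besicovitch-type covering lemma for the family of admissible skewed cylinders. Its geometric heart is an \emph{engulfing} property: there is a universal $\Lambda$ such that if $Q_{\e_1}(t_1,x_1)$ and $Q_{\e_2}(t_2,x_2)$ are admissible, $\e_2 \le \e_1$, and $Q_{\e_1}(t_1,x_1) \cap Q_{\e_2}(t_2,x_2) \ne \emptyset$, then $Q_{\e_2}(t_2,x_2) \subset Q_{\Lambda \e_1}(t_1,x_1)$. Since $Q_\e(t,x)$ is a tube of radius $\propto \e$ about a single trajectory with temporal length $9\e^2$, one has $|Q_\e(t,x)| = c\e^5$ and $|Q_{\Lambda\e_1}(t_1,x_1)| = \Lambda^5 |Q_{\e_1}(t_1,x_1)|$, so engulfing gives a covering lemma by the usual greedy argument (sort by size, keep a cylinder if disjoint from all previously kept larger ones; the $\Lambda$-dilates of the kept cylinders cover the collection with overlap bounded by a dimensional constant). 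The weak bound then follows in the standard way: for $\lambda > 0$ cover $\{\mmq f > \lambda\}$ by admissible cylinders on each of which $\fint |f| > \lambda$, select the subfamily $\{Q_{\e_i}(t_i,x_i)\}$, and estimate
\begin{align*}
    |\{\mmq f > \lambda\}| \le \sum_i |Q_{\Lambda \e_i}(t_i,x_i)| = \Lambda^5 \sum_i |Q_{\e_i}(t_i,x_i)| \le \frac{\Lambda^5}{\lambda} \sum_i \int_{Q_{\e_i}(t_i,x_i)} |f| \le \frac{\Lambda^5}{\lambda} \|f\|_{L^1}.
\end{align*}

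The main obstacle is the engulfing property, which reduces to showing that two intersecting admissible skewed cylinders have centerline trajectories that remain within $O(\e_1)$ of each other over the time span $9\e_1^2$. Writing $X^{(1)},X^{(2)}$ for these trajectories on a common time interval, $\frac{d}{d\tau}|X^{(1)}(\tau) - X^{(2)}(\tau)| \le |u_{\e_1}(\tau,X^{(1)}(\tau)) - u_{\e_2}(\tau,X^{(2)}(\tau))|$, which one wants to feed into Gronwall's inequality after controlling the mollified velocity gradient along the trajectories (the mismatch between the $\e_1$- and $\e_2$-mollifications costs only a harmless $\e_1\,\mm(|\grad u|)$ term). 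The difficulty is that admissibility supplies merely the space-time \emph{average} $\e^2 \fint_{Q_\e}\mm(|\grad u|)\,dx\,dt \le \eta_0$, not a pointwise-in-time sup; upgrading this averaged control to a Gronwall-usable bound valid \emph{uniformly} along the relevant trajectories is where one must invoke a secondary maximal function / stopping-time argument, and it is here that the divergence-free hypothesis is used, to keep the Jacobian of the flow under control so that these skewed averages are comparable across scales. This is the technical core of the construction in \cite{Yang2020}; once the trajectory-deformation estimate is in place, engulfing and hence the weak-$(1,1)$ bound follow, and Marcinkiewicz interpolation against the $L^\infty$ endpoint yields the $L^p$ bounds for all $p > 1$.
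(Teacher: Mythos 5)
The paper does not prove Theorem~\ref{thm:maximal-function}; it is quoted verbatim from \cite{Yang2020} with the explicit disclaimer ``We quote useful results here without proof.'' So there is no in-paper argument to compare your proposal against, and the comparison you are implicitly asked to make does not apply here.

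On its own terms, your outline is a sound rendering of the standard Vitali-type strategy, and you correctly identify where the real work lies. The observations that $\|\mmq f\|_{L^\infty} \le \|f\|_{L^\infty}$ trivially and that Marcinkiewicz interpolation closes the gap from the weak $(1,1)$ endpoint are exactly right. The reduction of the weak $(1,1)$ bound to an engulfing lemma (two intersecting admissible cylinders, the smaller one absorbed into a bounded dilate of the larger) is the correct geometric skeleton, and the volume count $|Q_{\Lambda\e}| = \Lambda^5 |Q_\e|$ is correct since the skewed cylinder is just a time-dependent translate of a straight parabolic cylinder, so its Lebesgue measure is insensitive to the skew. You also correctly flag the central technical obstacle: admissibility supplies only a space--time average of $\mm(|\grad u|)$ over the cylinder, not a sup in time, and turning that into a Gronwall-usable trajectory-deformation estimate is nontrivial. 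Worth noting explicitly: the appearance of $\mm(|\grad u|)$ (rather than $|\grad u|$) inside the admissibility condition \eqref{eqn:admissibility} is itself the ``secondary maximal function'' device you anticipate needing --- the spatial maximal function lets one pass from averaged control along one trajectory to pointwise control along nearby trajectories, which is what the Gronwall comparison of $X^{(1)}$ and $X^{(2)}$ requires. Your stated role for the divergence-free hypothesis (control of the flow Jacobian, hence comparability of skewed averages across scales and a workable Lebesgue differentiation statement) is plausible but stated as a guess; since the detailed argument lives in \cite{Yang2020}, you should consult that reference rather than treat this as resolved. As a sketch, though, there is no structural gap: the order of reductions is correct and the hardest step is honestly labeled as such.
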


An important consequence of the weak type $(1,1)$ bound of the Hardy-Littlewood maximal function is the Lebesgue differentiation theorem in $\R ^n$. Similarly, we can use the $\mathcal Q$-maximal function to prove the $\mathcal Q$-Lebesgue differentiation theorem.

\begin{theorem}[$\mathcal Q$-Lebesgue Differentiation Theorem]
\label{thm:lebesgue}
Let $f \in L ^1 _{\loc} ((0, T) \times \R ^3)$. Then for almost every $(t, x) \in (0, T) \times \R ^3$,
\begin{align*}
    \lim _{\e \to 0} \fint _{Q _\e (t, x)} |f (s, y) - f (t, x)| \d s \d y = 0.
\end{align*}
In this case we say $(t, x)$ is a $\mathcal Q$-Lebesgue point of $f$.
\end{theorem}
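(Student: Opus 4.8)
The plan is to adapt the classical proof of the Lebesgue differentiation theorem, replacing Euclidean balls by the skewed cylinders $Q_\e$ and the Hardy--Littlewood maximal function by $\mmq$ (working, as in the rest of Section 2.1, with a fixed divergence-free $u$ for which $\mmu \in L^q$). I would first introduce the maximal oscillation
\[
    \Omega f (t, x) := \limsup _{\e \to 0} \fint _{Q _\e (t, x)} |f (s, y) - f (t, x)| \d s \d y
\]
and reduce the statement to $\Omega f = 0$ almost everywhere, since then a.e.\ $(t,x)$ is a $\mathcal Q$-Lebesgue point. By multiplying $f$ by cut-offs supported on an exhausting sequence of compact subsets of $(0,T) \times \R ^3$ --- using that for $(t,x)$ in the interior of such a set the cylinders $Q_\e(t,x)$ stay inside a slightly larger compact set once $\e$ is small --- one may assume $f \in \L1{(0,T)\times\R^3}$.

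Next, given $\delta > 0$, I would split $f = g + h$ with $g \in \cci((0,T)\times\R^3)$ and $\|h\|_{\L1{(0,T)\times\R^3}} < \delta$. For the smooth piece, $\Omega g \equiv 0$: this uses the geometric fact (from the construction in \cite{Yang2020}) that $Q_\e(t,x)$ contains $(t,x)$ and satisfies $\diam Q_\e(t,x) \to 0$ as $\e \to 0$, together with the uniform continuity of $g$. For the remainder, bounding $|f - f(t,x)| \le |g - g(t,x)| + |h(s,y)| + |h(t,x)|$ under the average and using subadditivity of $\limsup$ gives
\[
    \Omega f (t, x) \le \limsup _{\e \to 0} \fint _{Q _\e (t, x)} |h (s, y)| \d s \d y + |h (t, x)| .
\]
To replace the right-hand $\limsup$ by $\mmq(|h|)(t,x)$ I would show that for a.e.\ $(t,x)$ the cylinder $Q_\e(t,x)$ is admissible in the sense of \eqref{eqn:admissibility} for all small $\e$: the inclusion $Q_\e(t,x) \subset (0,T)\times\R^3$ holds for small $\e$ since the cylinders shrink to the interior point $(t,x)$, and for the integral condition one uses that $Q_\e(t,x)$ is contained in an axis-parallel box of time-length of order $\e^2$ and space-width $O(\e^{1/2})$, so that $\e^2 \fint_{Q_\e(t,x)} \mmu \lesssim \e^{1/2}\, M_{\mathrm{str}}(\mmu)(t,x)$, where $M_{\mathrm{str}}$ is the strong maximal function. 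Since $\mmu \in L^q$ with $q > 1$ --- in the setting of this paper $\grad u \in L^2_{t,x}$, so $\mmu \in L^2$ --- the function $M_{\mathrm{str}}(\mmu)$ is finite a.e., hence the left-hand side is $< \eta_0$ once $\e$ is small enough. Therefore $\Omega f \le \mmq(|h|) + |h|$ a.e.

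Finally I would run the standard weak-type estimate: for each $\lambda > 0$,
\[
    \{ \Omega f > \lambda \} \subset \{ \mmq(|h|) > \lambda/2 \} \cup \{ |h| > \lambda/2 \}
\]
up to a null set, so the weak $(1,1)$ bound of $\mmq$ from Theorem \ref{thm:maximal-function} together with Chebyshev's inequality give $|\{\Omega f > \lambda\}| \le C\delta/\lambda$. Letting $\delta \to 0$ yields $|\{\Omega f > \lambda\}| = 0$ for every $\lambda > 0$, and taking the union over $\lambda = 1/k$, $k \in \mathbb N$, gives $\Omega f = 0$ almost everywhere, which is the claim.

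I expect the density/weak-type part to be entirely routine. The delicate point --- the one I would spend the most care on --- is the admissibility of small cylinders at a.e.\ base point, because the $\limsup$ defining $\Omega f$ ranges over all $\e$ whereas $\mmq$ records only admissible cylinders; the strong-maximal-function comparison sketched above, which itself rests on the ``shrinking to the point'' geometry of $Q_\e$ from \cite{Yang2020}, is how I would bridge that gap.
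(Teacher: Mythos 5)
The paper does not actually write out a proof of Theorem~\ref{thm:lebesgue} --- it only says that the argument is ``similar'' to the classical Lebesgue differentiation theorem via the weak $(1,1)$ bound of the maximal operator --- so there is no written proof to compare against. Your outline is the expected one (density of $\cci$, splitting $f=g+h$, oscillation operator $\Omega f$, weak-type estimate via Theorem~\ref{thm:maximal-function}), and you correctly isolate the one step that is genuinely non-routine in the $\mathcal Q$-setting: the $\limsup$ defining $\Omega f$ ranges over all $\e$, while $\mmq$ only sees admissible cylinders, so one must show that $Q_\e(t,x)$ is admissible for all sufficiently small $\e$, for a.e.\ $(t,x)$.

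Your resolution of that step is sound, and the arithmetic checks out: the flow displacement is $|X(t+\e^2 s)-X(t)|\lesssim \e^2\|u_\e\|_{L^\infty}\lesssim \e^2\cdot\e^{-3/2}\|u\|_{L^\infty_t L^2_x}\sim\e^{1/2}$, so $Q_\e(t,x)$ sits in an axis-parallel box $R$ of dimensions $\e^2\times(\e^{1/2})^3$ containing $(t,x)$, with $|R|/|Q_\e|\sim\e^{-3/2}$, and hence $\e^2\fint_{Q_\e}\mmu\lesssim\e^{1/2}M_{\mathrm{str}}(\mmu)(t,x)\to 0$ a.e. Two hypotheses implicit in your argument deserve to be stated explicitly: you need $u\in L^\infty_t L^2_x$ (or $L^2_t L^6_x$, etc.) to get the $\e^{1/2}$ displacement bound, and you need $\mmu\in L^q$ with $q>1$ so that $M_{\mathrm{str}}(\mmu)$ is finite a.e. Both hold for Leray--Hopf solutions and in every application in the paper (where $\grad u\in L^2_{t,x}$, so $\mmu\in L^2$), but the statements of Theorems~\ref{thm:maximal-function} and~\ref{thm:lebesgue} nominally allow $q=1$ and do not separately require $u\in L^\infty_t L^2_x$; your proof as given does not cover that borderline case. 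Also note, as a matter of technique, that your argument for the smooth part $\Omega g\equiv 0$ and for admissibility both rest on the same geometric input $\diam Q_\e(t,x)\to 0$, which again comes from the displacement bound on $X$; worth flagging since it is what ties the ``skewed'' geometry back to the Euclidean one.
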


\subsection{Lorentz Space}

Let $(X, \mu)$ be a measure space. Recall that for a measurable function $f$, its decreasing rearrangement is defined as
\begin{align*}
    f ^* (\lambda) := \inf \left\lbrace
        \alpha > 0: \mu (\{|f| > \alpha\}) < \lambda
    \right\rbrace, \qquad \lambda \ge 0.
\end{align*}
For $0 < p < \infty$, $0 < q \le \infty$, Lorentz space $L ^{p,q} (X)$ is defined as the set of functions $f$ for which
\begin{align*}
    \| f \| _{L ^{p,q} (X)} := \| t ^\frac1p f ^* \| _{L ^q (\frac{\d t}{t})} < \infty.
\end{align*}
Now we introduct the interpolation lemma for Lorentz spaces.

\begin{lemma}[Interpolation of Lorentz Spaces]
\label{lem:lorentz}
Let $\nu > 0$ be a fixed positive number. Assume $f _0 \in L ^{p _0, q _0}$, $f _1 \in L ^{p _1, q _1}$, where $0 < p _0, p _1 < \infty, 0 < q _0, q _1 \le \infty$. If $f$ is a measurable function satisfying
\begin{align*}
    2|f| \le \delta f _0 + \delta ^{-\nu} f _1 \qquad \forall \delta > 0
\end{align*}
then $f \in L ^{p, q}$, where
\begin{align*}
    \frac1p = \frac \nu{1+\nu} \frac1{p _0} + \frac 1{1+\nu} \frac1{p _1}, \qquad \frac1q = \frac \nu{1+\nu} \frac1{q _0} + \frac 1{1+\nu} \frac1{q _1}.
\end{align*}
\end{lemma}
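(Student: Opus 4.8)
The plan is to collapse the $\delta$-indexed family of pointwise inequalities into a single pointwise bound of geometric-mean type, and then read off $f \in L^{p,q}$ from a Hölder inequality for Lorentz norms. Write $\theta := \frac{\nu}{1+\nu} \in (0,1)$, so that $1-\theta = \frac1{1+\nu}$ and the claimed exponents are exactly $\frac1p = \frac\theta{p_0}+\frac{1-\theta}{p_1}$ and $\frac1q = \frac\theta{q_0}+\frac{1-\theta}{q_1}$.

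First I would optimize in $\delta$. At a point where $f_0$ and $f_1$ are both strictly positive, the choice $\delta = (f_1/f_0)^{1/(1+\nu)}$ makes the two terms on the right equal, $\delta f_0 = \delta^{-\nu}f_1 = f_0^\theta f_1^{1-\theta}$, so the hypothesis $2|f| \le \delta f_0 + \delta^{-\nu}f_1$ becomes $|f| \le f_0^\theta f_1^{1-\theta}$; where $f_0$ vanishes one sends $\delta \to \infty$, where $f_1$ vanishes one sends $\delta \to 0$, and in both degenerate cases $f = 0$. This gives the pointwise bound $|f| \le f_0^{\theta} f_1^{1-\theta}$ a.e.

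Next I would package this as a product estimate in Lorentz spaces. Since $\{|g|^r > \alpha\} = \{|g| > \alpha^{1/r}\}$, the decreasing rearrangement obeys $(|g|^r)^*(\lambda) = (g^*(\lambda))^r$ for $r>0$, whence $\big\|\,|g|^r\big\|_{L^{a/r,\,b/r}} = \|g\|_{L^{a,b}}^r$; thus $f_0^\theta \in L^{p_0/\theta,\,q_0/\theta}$ and $f_1^{1-\theta}\in L^{p_1/(1-\theta),\,q_1/(1-\theta)}$. The elementary set inclusion $\{h_0 h_1 > h_0^*(\lambda)h_1^*(\lambda)\} \subset \{h_0 > h_0^*(\lambda)\}\cup\{h_1 > h_1^*(\lambda)\}$ yields $(h_0 h_1)^*(2\lambda) \le h_0^*(\lambda)h_1^*(\lambda)$; combining this (with $h_0 = f_0^\theta$, $h_1 = f_1^{1-\theta}$) with the pointwise bound gives $f^*(2\lambda) \le (f_0^*(\lambda))^\theta(f_1^*(\lambda))^{1-\theta}$. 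Writing $\lambda^{1/p} = (\lambda^{1/p_0})^\theta(\lambda^{1/p_1})^{1-\theta}$ and substituting $\lambda \mapsto \lambda/2$ in the definition of the norm, $\|f\|_{L^{p,q}}$ is controlled by $2^{1/p}$ times the $L^q(\d\lambda/\lambda)$ norm of $\big(\lambda^{1/p_0}f_0^*(\lambda)\big)^\theta\big(\lambda^{1/p_1}f_1^*(\lambda)\big)^{1-\theta}$, and Hölder's inequality in $L^q(\d\lambda/\lambda)$ with the conjugate-type exponents $q_0/\theta$ and $q_1/(1-\theta)$ (whose reciprocals sum to $1/q$ by the choice of $q$, and which is valid also when $q_0$ or $q_1$ equals $\infty$) closes the estimate:
\begin{align*}
    \|f\|_{L^{p,q}} \le 2^{1/p}\,\|f_0\|_{L^{p_0,q_0}}^{\theta}\,\|f_1\|_{L^{p_1,q_1}}^{1-\theta} < \infty.
\end{align*}

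Every step is routine; the only places that need a little care are the reduction in the first step (extracting the geometric mean from the one-parameter family and handling the points where $f_0$ or $f_1$ vanishes) and the bookkeeping of Lorentz exponents under taking powers and products in the last step. I do not expect a genuine obstacle here: once the pointwise bound is in hand, the remainder is O'Neil's Hölder inequality for Lorentz spaces.
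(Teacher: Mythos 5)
Your proof is correct and follows the same strategy as the paper's: optimize over $\delta$ to collapse the one-parameter family into the pointwise geometric-mean bound $|f|\le f_0^\theta f_1^{1-\theta}$, then pass to decreasing rearrangements and apply H\"older in $L^q(\d\lambda/\lambda)$. You are in fact a bit more careful than the paper at the rearrangement step: the paper simply says ``upon decreasing rearrangement we may assume $f,f_0,f_1$ are nonnegative decreasing,'' whereas the pointwise inequality does not literally survive rearrangement; what one actually has is $(h_0 h_1)^*(2\lambda)\le h_0^*(\lambda)h_1^*(\lambda)$, exactly the product-rearrangement inequality you invoke, at the innocuous cost of the factor $2^{1/p}$ that you track explicitly.
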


\begin{proof}
Upon on decreasing rearrangement, we may assume $f, f _0, f _1$ are nonnegative decreasing functions on $[0, \infty)$. Set $\theta = \frac1{1+\nu}$, $\delta = f _0 ^{-\theta} f _1 ^{\theta}$, then
\begin{align*}
    2|f| \le f _0 ^{-\theta} f _1 ^{\theta} f _0 + f _0 ^{\nu \theta} f _1 ^{-\nu \theta} f _1 = 2 f _0 ^{1 - \theta} f _1 ^\theta.
\end{align*}
Then
\begin{align*}
    \| f \| _{L ^{p, q}} &= \| \lambda ^{\frac1p - \frac1q} f (\lambda) \| _{L ^q} \\
    &\le \| 
    \lambda ^{\frac{1-\theta}{p _0}-\frac{1-\theta}{q _0}} f _0 ^{1-\theta} (\lambda) \cdot
    \lambda ^{\frac\theta{p _1}-\frac \theta{q _1}} f _1 ^\theta (\lambda) \| _{L ^q} \\
    &\le \| \lambda ^{\frac{1-\theta}{p _0}-\frac{1-\theta}{q _0}} f _0 ^{1-\theta} (\lambda) \| _{L ^{\frac {q _0}{1 - \theta}}} \| \lambda ^{\frac\theta{p _1}-\frac \theta{q _1}} f _1 ^\theta (\lambda) \| _{L ^\frac{q _1}\theta} \\
    &= \| \lambda ^{\frac 1{p_0}-\frac 1{q _0}} f _0 \| _{L ^{q _0}} ^{1 - \theta} \| \lambda ^{\frac 1{p _1}-\frac 1{q _1}} f _1 \| _{L ^{q _1}} ^\theta \\
    &= \|f _0\| _{L ^{p _0, q _0}} ^{1 - \theta} \|f _1\| _{L ^{p _1, q _1}} ^{\theta}.
\end{align*}
\end{proof}

We would also like to mention that Riesz transform is bounded on Lorentz space. The proof can be found in \cite{Carrillo2007}. See \cite{Sawyer1990} for general Lorentz spaces.

\begin{lemma}
\label{lem:riesz-lorentz}
For $1 < p < \infty$, $1 \le q \le \infty$, $R _{ij} = \partial _i \partial _j \La \inv$ is a bounded linear operator from $L ^{p,q} (\R ^n)$ to itself. As a spatial operator, it is also bounded in time-space from $L ^{p,q} ((0, T) \times \R ^n)$ to itself.
\end{lemma}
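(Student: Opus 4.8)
The statement to prove is Lemma~\ref{lem:riesz-lorentz}: boundedness of the double Riesz transform $R_{ij} = \partial_i \partial_j \Delta^{-1}$ on Lorentz spaces $L^{p,q}(\mathbb{R}^n)$ for $1<p<\infty$, $1\le q\le\infty$, and the corresponding statement when it acts as a spatial operator on a space-time domain.

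Here is my plan.

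\textbf{Step 1: Reduce to real interpolation.} The classical Calderón–Zygmund theory gives that $R_{ij}$ is bounded on $L^p(\mathbb{R}^n)$ for every $1<p<\infty$ (it is a convolution with a kernel that is a Calderón–Zygmund kernel, or equivalently a Fourier multiplier with symbol $-\xi_i\xi_j/|\xi|^2 \in L^\infty$ that is smooth away from the origin and homogeneous of degree $0$). Fix the target exponent $p$ with $1<p<\infty$ and choose $p_0, p_1$ with $1<p_0<p<p_1<\infty$. Then $R_{ij}$ is bounded on $L^{p_0}$ and on $L^{p_1}$, i.e., it is of strong type $(p_0,p_0)$ and $(p_1,p_1)$. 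By the Marcinkiewicz real interpolation theorem for Lorentz spaces (which is where one cites \cite{Sawyer1990} or \cite{Carrillo2007}), a linear operator of strong types $(p_0,p_0)$ and $(p_1,p_1)$ is bounded from $L^{p,q}$ to $L^{p,q}$ for every $q \in (0,\infty]$, since $L^{p,q} = (L^{p_0,p_0}, L^{p_1,p_1})_{\theta,q}$ with $\frac1p = \frac{1-\theta}{p_0} + \frac{\theta}{p_1}$. This immediately gives the first assertion for the full range $1\le q\le\infty$ (indeed $0<q\le\infty$).

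\textbf{Step 2: The space-time statement.} When $R_{ij}$ acts only in the spatial variables on a function $f(t,x)$ on $(0,T)\times\mathbb{R}^n$, one uses the same argument slice-wise together with a Fubini/mixed-norm observation: $R_{ij}$ is still bounded on $L^{p}((0,T)\times\mathbb{R}^n)$ for each $1<p<\infty$ because $\|R_{ij}f\|_{L^p_{t,x}}^p = \int_0^T \|R_{ij}f(t,\cdot)\|_{L^p_x}^p\,dt \le C_p^p \int_0^T \|f(t,\cdot)\|_{L^p_x}^p\,dt$. Thus $R_{ij}$ is of strong type $(p_0,p_0)$ and $(p_1,p_1)$ on the measure space $(0,T)\times\mathbb{R}^n$ with Lebesgue measure, and the same Marcinkiewicz interpolation on this measure space yields boundedness on $L^{p,q}((0,T)\times\mathbb{R}^n)$. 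Note that the Lorentz space here is defined with respect to the full $(n+1)$-dimensional measure, so this is genuinely using interpolation on the product space rather than a vector-valued extension; the key point is only that we already have the $L^p$ bound there, which is elementary.

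\textbf{Main obstacle.} There is essentially no deep obstacle; the lemma is a citation-level fact. The only thing to be careful about is the precise form of the real interpolation identity $(L^{p_0}, L^{p_1})_{\theta,q} = L^{p,q}$ and the fact that Marcinkiewicz interpolation between \emph{strong}-type endpoints (not just weak-type) gives the strong $L^{p,q}$ bound for \emph{all} $q$, including $q=\infty$ and $q<p$; this is exactly the content of the references \cite{Carrillo2007,Sawyer1990}, so in the paper it suffices to invoke them. One should also remark that it is enough to treat a single $R_{ij}$, and that the statement for the Hessian operator $\nabla^2\Delta^{-1}$ follows by summing finitely many $R_{ij}$'s, each bounded on $L^{p,q}$.

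\begin{proof}
Fix $1<p<\infty$ and pick $1<p_0<p<p_1<\infty$. By the classical Calderón–Zygmund theory, $R_{ij}=\partial_i\partial_j\Delta^{-1}$ is a Fourier multiplier operator with bounded symbol $-\xi_i\xi_j/|\xi|^2$, hence bounded on $L^{p_0}(\mathbb{R}^n)$ and on $L^{p_1}(\mathbb{R}^n)$. Since $L^{p,q}(\mathbb{R}^n)=(L^{p_0}(\mathbb{R}^n),L^{p_1}(\mathbb{R}^n))_{\theta,q}$ where $\frac1p=\frac{1-\theta}{p_0}+\frac{\theta}{p_1}$, the Marcinkiewicz real interpolation theorem (see \cite{Sawyer1990} for general Lorentz spaces and \cite{Carrillo2007} for this statement) gives that $R_{ij}$ is bounded from $L^{p,q}(\mathbb{R}^n)$ to itself for every $1\le q\le\infty$.

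For the space-time statement, regard $R_{ij}$ as acting on the spatial variable of $f=f(t,x)$ on $(0,T)\times\mathbb{R}^n$. By Fubini and the $L^{p_k}_x$ bound, for $k=0,1$,
\begin{align*}
    \| R _{ij} f \| _{L ^{p _k} ((0,T)\times\mathbb{R}^n)} ^{p _k} = \int _0 ^T \| R _{ij} f (t, \cdot) \| _{L ^{p _k} (\mathbb{R}^n)} ^{p _k} \d t \le C _{p _k} ^{p _k} \int _0 ^T \| f (t, \cdot) \| _{L ^{p _k} (\mathbb{R}^n)} ^{p _k} \d t,
\end{align*}
so $R_{ij}$ is of strong type $(p_0,p_0)$ and $(p_1,p_1)$ on the measure space $(0,T)\times\mathbb{R}^n$. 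The same real interpolation argument, now on this measure space, yields boundedness of $R_{ij}$ from $L^{p,q}((0,T)\times\mathbb{R}^n)$ to itself. Finally, $\nabla^2\Delta^{-1}$ is a finite sum of operators $R_{ij}$, each bounded on $L^{p,q}$, so it is bounded on $L^{p,q}$ as well.
\end{proof}
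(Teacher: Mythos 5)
Your proof is correct, and it supplies the standard argument that the paper itself delegates to the references \cite{Carrillo2007,Sawyer1990} rather than spelling out: the paper states the lemma without proof and cites. Your route — strong $L^{p_0}$ and $L^{p_1}$ bounds from Calderón–Zygmund theory, followed by the real interpolation identity $(L^{p_0},L^{p_1})_{\theta,q}=L^{p,q}$, applied once on $\mathbb{R}^n$ and once on the product measure space $(0,T)\times\mathbb{R}^n$ after the Fubini slice-wise $L^{p_k}$ bound — is exactly the standard proof that those references contain, so you are matching the intended approach rather than diverging from it. (A trivial typo: the symbol of $\partial_i\partial_j\Delta^{-1}$ is $\xi_i\xi_j/|\xi|^2$, not $-\xi_i\xi_j/|\xi|^2$, but of course the sign is irrelevant to boundedness.)
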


\subsection{Helmholtz decomposition}

First recall two vector calculus identities:
\begin{align}
    \label{vc1}
    \grad (u \cdot v) &= (u \cdot \grad) v + (v \cdot \grad) u + u \cross \curl v + v \cross \curl u, \\
    \label{vc3}
    \curl (u \cross v) &= u \div v - v \div u + (v \cdot \grad) u - (u \cdot \grad) v.
\end{align}
For operators $A$ and $B$, denote $[A, B] = AB - BA$ to be their commutator. Define $\Pcurl = -\curl \curl \La \inv$ and $\Pgrad = \grad \La \inv \div = \Id - \Pcurl$ to be the Helmholtz decomposition. Then we compute the following commutators.
\begin{align}
    \label{cm1}
    [\vp, \curl] u &= -\grad \vp \times u, \\
    \label{cm2}
    [\vp, \La] u &= -2 \grad \vp \cdot \grad u - (\La \vp) u = -2 \div (\grad \vp \tensor u) + (\La \vp) u, \\
    \label{cm3}
    [\vp, \La \inv] u &= \La \inv \left\{2 \grad \vp \cdot \grad \La \inv u + (\La \vp) \La \inv u\right\}, \\
    \label{cm4}
    [\vp, \Pcurl] u &= \grad \vp \cross \curl \La \inv u + 
        \grad \vp \div \La \inv u 
        - \La \inv u \La \vp \\
    \notag
    & \qquad + (\La \inv u \cdot \grad) \grad \vp 
        - (\grad \vp \cdot \grad) \La \inv u
    \\
    \notag 
    &\qquad + \Pcurl \left\{2 \grad \vp \cdot \grad \La \inv u + (\La \vp) \La \inv u\right\}.
\end{align}
The first two are straightforward. The third uses 
$$[\vp, \La \inv] = -\La \inv [\vp, \La] \La \inv,$$
and the last one is because
\begin{align*}
    [\vp, \Pcurl] &= [\vp, -\curl \curl \La \inv] \\
    &= -[\vp, \curl] \curl \La \inv - \curl [\vp, \curl] \La \inv - \curl \curl [\vp, \La \inv], \\
    [\vp, \Pcurl] u &= \grad \vp \cross \curl \La \inv u + \curl (\grad \vp \cross \La \inv u) \\
    & \qquad - \curl \curl \La \inv \left\{2 \grad \vp \cdot \grad \La \inv u + (\La \vp) \La \inv u\right\},
\end{align*}
and we can expand $\curl (\grad \vp \cross \La \inv u)$ by \eqref{vc3}.

\begin{lemma}
\label{lem-commutator}
$\partial _i [\vp, \Pcurl]$ and $[\vp, \Pcurl] \partial _i$ are both bounded linear operator from $L ^p$ to $L ^p$ for any $1 < p < \infty$, i.e.
\begin{align*}
    \| \partial _i [\vp, \Pcurl] u \| _{L ^p} + \| [\vp, \Pcurl] \partial _i u \| _{L ^p} \le C _{p, \vp} \| u \| _{L ^p}.
\end{align*}
\end{lemma}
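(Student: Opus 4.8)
The strategy is to use that $\Pcurl = \Id - \grad \La \inv \div$ is a Calder\'on--Zygmund operator: its symbol is the matrix $\delta _{jk} - \xi _j \xi _k / |\xi| ^2$, so $\Pcurl$ equals a constant matrix multiple of the identity plus convolution with a matrix-valued kernel $K (x)$ on $\R ^3$ that is homogeneous of degree $-3$, smooth away from the origin, and has mean zero on spheres. In particular $\Pcurl$ is bounded on every $L ^p$, $1 < p < \infty$, and since the local part commutes with multiplication by $\vp$, the commutator $[\vp, \Pcurl]$ is the integral operator with off-diagonal kernel $\mathcal K (x, y) := K (x - y) \left(\vp (x) - \vp (y)\right)$.

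First I would reduce both operators to a single one. On $\cci (\R ^3)$ one has the identity $\partial _i [\vp, \Pcurl] = [\partial _i \vp, \Pcurl] + [\vp, \Pcurl] \partial _i$ because $\partial _i$ commutes with $\Pcurl$; here $[\partial _i \vp, \Pcurl] = (\partial _i \vp) \Pcurl - \Pcurl \left((\partial _i \vp)\,\cdot\,\right)$ is bounded on $L ^p$ as a difference of compositions of $\Pcurl$ with multiplication by $\partial _i \vp \in L ^\infty$. Differentiating $\mathcal K$ (in $x$, respectively in $y$ after an integration by parts), both $\partial _i [\vp, \Pcurl]$ and $[\vp, \Pcurl] \partial _i$ differ, by operators of the form $(\partial _i \vp) \Pcurl$, $\Pcurl \left((\partial _i \vp)\,\cdot\,\right)$, and multiplication by a bounded function, from the operator $T$ with off-diagonal kernel
\begin{align*}
    \mathcal L (x, y) := (\partial _i K)(x - y)\left(\vp (x) - \vp (y)\right).
\end{align*}
So it suffices to prove $T$ is bounded on $L ^p$ for $1 < p < \infty$.

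The $L ^2$ bound I would obtain from Plancherel. The symbol $M _i (\xi)$ of $\Pcurl \partial _i$, namely the matrix $\left(\delta _{jk} - \xi _j \xi _k / |\xi| ^2\right) i \xi _i$, is homogeneous of degree $1$ and smooth on $\R ^3 \setminus \{0\}$; being homogeneous of degree one with bounded gradient off the origin and continuous at the origin, it is Lipschitz on $\R ^3$, so $|M _i (\eta) - M _i (\xi)| \le C |\eta - \xi|$. From
\begin{align*}
    \widehat{[\vp, \Pcurl \partial _i] u}\,(\xi) = \int _{\R ^3} \hat \vp (\xi - \eta) \left(M _i (\eta) - M _i (\xi)\right) \hat u (\eta) \d \eta
\end{align*}
and this bound, $\big| \widehat{[\vp, \Pcurl \partial _i] u}\,(\xi)\big| \le C \left(\psi * |\hat u|\right)(\xi)$ with $\psi (\zeta) = |\hat \vp (\zeta)|\,|\zeta|$, which lies in $L ^1 (\R ^3)$ since $\hat \vp$ is Schwartz; Young's inequality then gives $\big\| [\vp, \Pcurl \partial _i] u \big\| _{L ^2} \le C \| u \| _{L ^2}$. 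Since $[\vp, \partial _i] = -(\partial _i \vp)\,\cdot\,$, one has $[\vp, \Pcurl] \partial _i = [\vp, \Pcurl \partial _i] + \Pcurl \left((\partial _i \vp)\,\cdot\,\right)$, and together with the reduction above this shows $T$ is bounded on $L ^2$.

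It remains to check that $\mathcal L$ is a Calder\'on--Zygmund kernel on $\R ^3$. Using $|\partial _i K (z)| \lesssim |z| ^{-4}$, $|\grad \partial _i K (z)| \lesssim |z| ^{-5}$ and $|\vp (x) - \vp (y)| \lesssim \min(|x - y|, 1)$, one gets the size bound $|\mathcal L (x, y)| \lesssim |x - y| ^{-3}$ and the H\"ormander bound $|\grad _{x, y} \mathcal L (x, y)| \lesssim |x - y| ^{-4}$. With the $L ^2$ boundedness already established, the Calder\'on--Zygmund theorem yields boundedness of $T$ on $L ^p$ for all $1 < p < \infty$, and adding back the bounded pieces proves the lemma. (Alternatively one may expand $[\vp, \Pcurl]$ by \eqref{cm4} and recognize each resulting term, after composing with $\partial _i$, as a smooth compactly supported coefficient times a second-order Riesz transform of $u$, plus $\Pcurl$ of such terms; the kernel computation above seems shorter.) I expect the only genuinely delicate step to be the $L ^2$ endpoint — the Lipschitz estimate on $M _i$ and the Schwartz decay of $\hat\vp$ are exactly what makes the commutator gain; the passage to general $p$ is routine once the kernel bounds are in hand.
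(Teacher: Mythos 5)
Your proof is correct, but it takes a genuinely different route from the paper. The paper's argument is more elementary: it expands $[\vp, \Pcurl]$ explicitly via \eqref{cm4}, observes that each resulting piece of $[\vp, \Pcurl]\partial_i u$ is either a bounded Riesz transform of $u$ or a lower-order potential localized to $\supp\vp$, and uses the bounded-domain Sobolev embedding $L^{p^*}(\supp\vp) \hookrightarrow L^p(\supp\vp)$ to absorb the latter; this handles $1<p<3$ directly, and the range $\frac32 < p < \infty$ follows by duality, the two cases overlapping to cover all $p$. Your approach instead recognizes $[\vp,\Pcurl]\partial_i$ — modulo the manifestly bounded pieces $(\partial_i\vp)\Pcurl$, $\Pcurl((\partial_i\vp)\,\cdot\,)$, and multiplication by $\partial_i\vp$ — as a Calder\'on--Zygmund operator with kernel $(\partial_iK)(x-y)(\vp(x)-\vp(y))$: the commutator gain $\vp(x)-\vp(y)=O(|x-y|)$ lowers the homogeneity of $\partial_i K$ from $-4$ to the critical $-3$, the $L^2$ bound follows from Plancherel plus the (correct) observation that the degree-one-homogeneous symbol $M_i$ of $\Pcurl\partial_i$ is globally Lipschitz, and the full $L^p$ range then comes from CZ theory in one stroke. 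Both are valid; the paper's proof is shorter and avoids CZ machinery at the cost of a case split, while yours is more robust and conceptually uniform in $p$ but requires invoking the CZ theorem and a slightly delicate identification of the off-diagonal kernel. The one step worth spelling out in a careful writeup is that the kernel identity $Tu(x)=\int\mathcal L(x,y)u(y)\,dy$ for $x\notin\supp u$ — which is the input CZ theory actually requires once $L^2$ boundedness is known — holds because the integrand is locally bounded away from the diagonal, so the formal integration by parts in $y$ is legitimate there.
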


\begin{proof}
First, we observe that by Jacobi identity $[\vp, \Pcurl] \partial _i$ and $\partial _i [\vp, \Pcurl]$ differ by
\begin{align*}
    [[\vp, \Pcurl], \partial _i] = [\vp, [\Pcurl, \partial _i]] - [\Pcurl, [\vp, \partial _i]] = 0 - [\Pcurl, \partial _i \vp]
\end{align*}
which is bounded from $L ^p$ to $L ^p$ for any $p$, because both $\Pcurl$ and multiplication by $\partial _i \vp$ are bounded from $L ^p$ to $L ^p$, so we can complete the proof by duality. For $1 < p < 3$, set $\frac1{p ^*} = \frac1p - \frac13$, from \eqref{cm4} we can see
\begin{align*}
    \| [\vp, \Pcurl] \partial _i u \| _{L ^p} & \lesssim
    \| \grad \La \inv \partial _i u \| _{L ^{p} (\R ^3)} + C _{p, \vp} 
    \| \La \inv \partial _i u \| _{L ^{p} (\supp \vp)} \\ 
    & \lesssim 
    \| u \| _{L ^{p} (\R ^3)} + C _{p, \vp} 
    \|\partial _i \La \inv u \| _{L ^{p ^*} (\supp \vp)} \\
    & \le C \| u \| _{L ^p (\R ^3)}.
\end{align*}
For $\frac32 < p < \infty$, set $1 - \frac1p = \frac1q = \frac1{q ^*} + \frac13$, then $1 < p, q, q^* < \infty$. Take any $u \in L ^p (\R ^3)$ and any vector field $v \in L ^q (\R ^3)$, 
\begin{align*}
    \int \partial _i [\vp, \Pcurl] u \cdot v \d x 
    & = -\int [\vp, \Pcurl] u \cdot \partial _i v \d x \\
    & = \int u \cdot [\vp, \Pcurl] \partial _i v \d x \\
    & \le \| u \| _{L ^p (\R ^3)} (\| v \| _{L ^{q} (\R ^3)} + \|\partial _i \La \inv v\| _{L ^{q} (\supp \vp)}) \\
    & \le \| u \| _{L ^p (\R ^3)} (\| v \| _{L ^{q} (\R ^3)} + C _{q, \vp} \|\partial _i \La \inv v\| _{L ^{q ^*} (\supp \vp)}) \\
    & \le C \| u \| _{L ^p (\R ^3)} \| v \| _{L ^{q} (\R ^3)}.
\end{align*}
\end{proof}

\begin{corollary}
$\partial _i [\vp, \Pgrad]$ and $[\vp, \Pgrad] \partial _i$ are both bounded linear operator from $L ^p$ to $L ^p$ for any $1 < p < \infty$.
\end{corollary}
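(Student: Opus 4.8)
The plan is to reduce this immediately to Lemma~\ref{lem-commutator} using the identity $\Pgrad = \Id - \Pcurl$ that was recorded when the Helmholtz projections were introduced. Since multiplication by the cut-off $\vp$ commutes with the identity operator, $[\vp, \Id] = 0$, and therefore
$$[\vp, \Pgrad] = [\vp, \Id - \Pcurl] = [\vp, \Id] - [\vp, \Pcurl] = -[\vp, \Pcurl].$$
Composing on either side with $\partial _i$ gives $\partial _i [\vp, \Pgrad] = -\partial _i [\vp, \Pcurl]$ and $[\vp, \Pgrad] \partial _i = -[\vp, \Pcurl] \partial _i$.

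From here I would simply invoke Lemma~\ref{lem-commutator}, which asserts exactly that $\partial _i [\vp, \Pcurl]$ and $[\vp, \Pcurl] \partial _i$ are bounded from $L ^p$ to $L ^p$ for every $1 < p < \infty$, with norm $\le C _{p, \vp} \| u \| _{L ^p}$. Passing to the negatives changes nothing, so $\partial _i [\vp, \Pgrad]$ and $[\vp, \Pgrad] \partial _i$ enjoy the same bound, with the same constant $C _{p, \vp}$.

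There is no genuine obstacle: all of the analytic content lives in Lemma~\ref{lem-commutator}, and this corollary is merely its statement transported across the Helmholtz decomposition. The one point worth keeping in mind is that the identity $\Pgrad + \Pcurl = \Id$ is to be read as an identity of bounded operators on $L ^p$ for $1 < p < \infty$ (valid since $\Pcurl = -\curl\curl\La\inv$ has Riesz-transform structure and is thus Calderón--Zygmund bounded on those spaces), so the algebraic manipulation above is legitimate on precisely the range of exponents claimed, with no density or distributional subtlety to address.
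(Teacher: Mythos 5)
Your proof is correct and is essentially identical to the paper's: both reduce the claim to Lemma~\ref{lem-commutator} via the identity $[\vp, \Pgrad] = -[\vp, \Pcurl]$, which follows because $\Id = \Pgrad + \Pcurl$ commutes with multiplication by $\vp$.
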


\begin{proof}
$\Id = \Pgrad + \Pcurl$ commutes with $\vp$, so $[\vp, \Pgrad] = -[\vp, \Pcurl]$.
\end{proof}

Because of the smoothing effect of the Laplace potential, we have the following.

\begin{lemma}
\label{lem:smoothing}
    Let $\vp \in \cci (\R ^3)$ be supported away from some openset $\Omega \subset \R ^3$, that is, $\operatorname{dist} (\supp \vp, \Omega) = d > 0$. Then for any $f \in L ^1 _{\loc} (\R ^3)$, $k > 0$,
    \begin{align*}
        \| \La \inv (\vp f) \| _{C ^k (\Omega)} \lesssim _{k, d} \| f \| _{L ^1 (\supp \vp)}.
    \end{align*}
    We also have
    \begin{align*}
        \| \Pgrad (\vp f) \| _{C ^k (\Omega)}, \| \Pcurl (\vp f) \| _{C ^k (\Omega)} \lesssim _{k, d} \| f \| _{L ^1 (\supp \vp)}.
    \end{align*}
\end{lemma}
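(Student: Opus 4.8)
The plan is to use the explicit Newtonian potential representation of $\La\inv$ in $\R^3$ and to exploit that its kernel becomes smooth and bounded once evaluated away from the diagonal. Since $f \in L^1 _\loc(\R^3)$ and $\vp$ is compactly supported, $\vp f \in L^1(\R^3)$ with compact support, so
\begin{align*}
    \La\inv (\vp f)(x) = -\frac1{4\pi} \int _{\supp \vp} \frac{\vp(y) f(y)}{|x - y|} \d y
\end{align*}
is well-defined and smooth for every $x \notin \supp \vp$, in particular for every $x \in \Omega$. The key geometric observation is that for $x \in \Omega$ and $y \in \supp \vp$ one has $|x - y| \ge d$, while the Newtonian kernel obeys $\bigl| \partial _x ^\alpha |x-y|\inv \bigr| \lesssim _{|\alpha|} |x - y| ^{-1-|\alpha|} \le d ^{-1-|\alpha|}$ for every multi-index $\alpha$.

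I would first justify differentiating under the integral sign: on any ball compactly contained in $\Omega$ the integrand $\partial _x ^\alpha (|x-y|\inv)\, \vp(y) f(y)$ is dominated, uniformly in $x$, by $C _\alpha\, d ^{-1-|\alpha|} \|\vp\| _{L^\infty} |f(y)| \ind{\supp \vp}(y) \in L^1(\R^3)$, so dominated convergence gives
\begin{align*}
    \bigl| \partial _x ^\alpha \La\inv(\vp f)(x) \bigr| \le \frac1{4\pi}\int _{\supp \vp} \bigl| \partial _x ^\alpha |x-y|\inv \bigr|\, |\vp(y)|\, |f(y)| \d y \lesssim _{|\alpha|} d ^{-1-|\alpha|} \|\vp\| _{L^\infty} \|f\| _{L^1(\supp \vp)}.
\end{align*}
Taking the supremum over $x \in \Omega$ and over all $\alpha$ with $|\alpha| \le k$ (and, when $k \notin \mathbb N$, bounding the top-order Hölder seminorm by the next derivative in exactly the same way) yields the claimed bound on $\| \La\inv(\vp f) \| _{C^k(\Omega)}$. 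For the Helmholtz pieces, I would use that $\La\inv$ is a scalar multiplier and commutes with $\grad$, $\div$ and $\curl$, so that $\Pgrad(\vp f) = \grad \div \La\inv(\vp f)$ and $\Pcurl(\vp f) = -\curl \curl \La\inv(\vp f)$ are fixed linear combinations of second-order spatial derivatives of $\La\inv(\vp f)$; hence
\begin{align*}
    \| \Pgrad(\vp f) \| _{C^k(\Omega)} + \| \Pcurl(\vp f) \| _{C^k(\Omega)} \lesssim \| \La\inv(\vp f) \| _{C^{k+2}(\Omega)} \lesssim _{k, d} \| f \| _{L^1(\supp \vp)}
\end{align*}
follows from the first estimate applied with $k+2$ in place of $k$.

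There is no real obstacle in this lemma; the only point worth flagging is that one should write $\Pgrad = \grad \div \La\inv$ rather than $\grad \La\inv \div$, so that both derivatives fall on the smooth, supported-away kernel instead of on $f$, which is merely $L^1 _\loc$. After that reduction the estimate is just differentiation under the integral together with the elementary kernel bound above.
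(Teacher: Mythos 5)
The paper states this lemma without proof, treating it as a standard consequence of the smoothing of the Newtonian potential away from the diagonal, so there is no paper argument to compare against. Your proof is correct and is the expected one: writing $\La\inv(\vp f)$ as the convolution with $-\frac{1}{4\pi|\cdot|}$, using that for $x\in\Omega$ and $y\in\supp\vp$ the kernel and all its $x$-derivatives are bounded by $C_\alpha d^{-1-|\alpha|}$, and invoking dominated convergence to differentiate under the integral. The reduction $\Pgrad(\vp f)=\grad\div\La\inv(\vp f)$ and $\Pcurl(\vp f)=-\curl\curl\La\inv(\vp f)$ is also the right move; pulling both derivatives onto $\La\inv(\vp f)$ avoids having to make distributional sense of $\div(\vp f)$ for merely $L^1_{\loc}$ data, and it matches the paper's definitions of $\Pgrad$, $\Pcurl$ since $\La\inv$ commutes with $\div$ as Fourier multipliers. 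One small point worth tightening if this were to appear in print: when $k\notin\mathbb N$ and $\Omega$ is unbounded, the Hölder seminorm of a derivative $\partial^\alpha\La\inv(\vp f)$ on $\Omega$ should be bounded by combining the sup bound on $\partial^\alpha$ (for $|x-x'|\ge 1$) with the sup bound on $\partial^{\alpha}\grad$ (for $|x-x'|\le 1$), rather than by the next derivative alone; both bounds are already in your estimate, so this is a presentational fix rather than a gap. Also, strictly speaking the constant inherits a factor of $\|\vp\|_{L^\infty}$, which is harmless since in every use in the paper $\vp$ is a fixed cutoff with $\|\vp\|_{L^\infty}\le 1$, but you may wish to note it.
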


\section{Proof of the Main Results}

In this section, we show that the Local Theorem \ref{thm:local} leads to the main results. First, we show the pivot quantity is indeed enough to bound $\grad ^n \omega$. 

\begin{lemma}
    \label{lem:pivot}
    There exists $\eta _2 > 0$ such that the following holds. Let $\frac{11}6 < p < 2$, $\frac{2-p}{p-1} < \nu \le \frac{7p-12}{6-p}$. If $u$ is a suitable solution to the Navier-Stokes equations in $(-9, 0) \times \R ^3$ satisfying the following conditions,
    \begin{align}
        \label{eqn:mean-zero-property-2}
        &\int _{B _1} u (t, x) \phi (x) \d x = 0, \qquad \text{\upshape a.e. } t \in (-9, 0), \\
        \label{eqn:Lp-smallness}
        & \delta ^{-\nu} \left(\int _{Q _3} |\grad u| ^p \d x \d t\right) ^\frac1p \le \eta _2, \\
        & \delta \int _{Q _3} |\grad u| ^{2} \d x \d t \le \eta _2,
    \end{align}
    for some $\delta \le \eta _2$, 
    then we have for any $n \ge 0$,
    \begin{align*}
        \| \grad ^n \omega \| _{L ^\infty _{t, x} (Q _{8 ^{-n-2}})} \le C _n.
    \end{align*}
    Here $C _n$ is the same constant in Theorem \ref{thm:local}.
\end{lemma}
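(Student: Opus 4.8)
The goal is to verify the hypotheses of the Local Theorem \ref{thm:local} from the pivot-quantity smallness, so that the $L^\infty$ bounds on $\grad^n\omega$ follow directly. The plan is to choose suitable exponents $(p_1,q_1)$ and $(p_2,q_2)$ and show that $\|\grad u\|_{\pq1(Q_2)} + \|\omega\|_{\pq2(Q_2)} \le \eta_1$, while the mean-zero condition \eqref{eqn:mean-zero-velocity} is already given by \eqref{eqn:mean-zero-property-2} (note $Q_2\subset Q_3$ and $(-4,0)\subset(-9,0)$, so the hypotheses of Theorem \ref{thm:local} are posed on a smaller set). The core computation is an interpolation: we control a mixed norm $\pq2$ of $\omega$ that is ``slightly superquadratic in time, slightly subquadratic in space'' by interpolating between $L^p_{t,x}$ (subquadratic, coming from the first smallness assumption after paying a factor $\delta^{-\nu}$) and $L^2_tL^\infty_x\hookleftarrow$ no — rather between $L^p_{t,x}$ and $L^\infty_t L^1_x$, the latter being dominated by $L^2_{t,x}$ of $\grad u$ on the bounded cylinder. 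Here one uses $\omega = \curl u$ so $|\omega|\lesssim|\grad u|$ pointwise, hence $\|\omega\|_{L^p(Q_3)}\lesssim\|\grad u\|_{L^p(Q_3)}$ and $\|\omega\|_{L^\infty_tL^1_x(Q_3)}\lesssim\|\grad u\|_{L^\infty_tL^1_x(Q_3)}\lesssim\|\grad u\|_{L^2_{t,x}(Q_3)}$ by Cauchy–Schwarz in space on the fixed ball $B_3$.

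Concretely, I would first fix the admissible target exponents. For the $\grad u$ term in \eqref{eqn:compensated-integrability} it suffices to take $(p_1,q_1)=(p,p)$: since $\frac{11}{6}<p<2$ we have $\frac43\le p$, $q_1=p<3$, so this choice is legal provided the companion $(p_2,q_2)$ satisfies $\frac1{p_1}+\frac1{p_2}<1$ and $\frac1{q_1}+\frac1{q_2}\le\frac76$, i.e. $\frac1{p_2}<1-\frac1p=\frac{p-1}{p}$ and $\frac1{q_2}\le\frac76-\frac1p$. The smallness of $\|\grad u\|_{L^p(Q_3)}$ is then immediate from \eqref{eqn:Lp-smallness}: since $\nu>0$ and $\delta\le\eta_2\le1$ we get $\|\grad u\|_{L^p(Q_3)}\le\delta^\nu\eta_2\le\eta_2$. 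For the $\omega$ term I interpolate: writing $\frac1{p_2}=\frac{1-\theta}{p}+\frac\theta\infty$ and $\frac1{q_2}=\frac{1-\theta}{p}+\frac\theta1$ for a suitable $\theta\in(0,1)$, Hölder in time gives
\begin{align*}
    \|\omega\|_{\pq2(Q_3)} \le \|\omega\|_{L^p(Q_3)}^{1-\theta}\,\|\omega\|_{L^\infty_tL^1_x(Q_3)}^\theta \lesssim \|\grad u\|_{L^p(Q_3)}^{1-\theta}\,\|\grad u\|_{L^2_{t,x}(Q_3)}^{\theta}.
\end{align*}
The exponent $\theta$ should be taken close to $0$ so that $p_2$ is close to $p<2$ and $q_2$ is close to $p<3$; the constraints $\frac1{p_2}<\frac{p-1}{p}$ and $\frac1{q_2}\le\frac76-\frac1p$ then translate, after a short arithmetic, into the stated range $\frac{2-p}{p-1}<\nu\le\frac{7p-12}{6-p}$ once we set $\theta=\frac1{1+\nu}$ (so $1-\theta=\frac{\nu}{1+\nu}$), matching exactly the weighting in which $\delta^{-\nu}\|\grad u\|_{L^p}$ and $\delta\|\grad u\|_{L^2}$ combine. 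Indeed $\|\grad u\|_{L^p}^{1-\theta}\|\grad u\|_{L^2}^\theta = \big(\delta^{-\nu}\|\grad u\|_{L^p}\big)^{\frac\nu{1+\nu}\cdot\frac1\nu\cdots}$ — the point is that the product $\delta^{-\nu(1-\theta)}\cdot\delta^{\,?}$ has a nonpositive net power of $\delta$ precisely when $\theta$ sits in the admissible interval, so the right-hand side is bounded by a power of $\eta_2$. Choosing $\eta_2$ small enough (depending on $\eta_1$ and the interpolation constant) makes both quantities $\le\eta_1$, and Theorem \ref{thm:local} applies.

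The main obstacle is the bookkeeping of exponents: one must check that the $\theta$ forced by the $q_2$-constraint is consistent with the $\theta$ forced by the $p_2$-constraint and with $\theta=\frac1{1+\nu}$, and that in the resulting product of smallness factors the power of $\delta$ is genuinely $\le 0$ so that no large $\delta^{-\text{positive}}$ survives; this is exactly where the hypothesis $\nu\le\frac{7p-12}{6-p}$ (which forces $p>\frac{12}{7}$, compatible with $p>\frac{11}{6}$) and $\nu>\frac{2-p}{p-1}$ are used, and it is the only delicate computation. Everything else — the pointwise bound $|\omega|\le\sqrt2|\grad u|$, Hölder on the bounded domain $Q_3$, Cauchy–Schwarz to pass from $L^\infty_tL^1_x$ to $L^2_{t,x}$, and the inclusion of cylinders — is routine. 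A minor point to state carefully is that $\eta_2$ must be chosen after $\eta_1$ and the universal interpolation constants, and that it also absorbs the requirement $\delta\le\eta_2\le1$ used to discard favorable powers of $\delta$.
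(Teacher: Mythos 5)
The overall strategy is right — take $(p_1,q_1)=(p,p)$, interpolate the $\pq2$ norm of $\omega$ between $L^p_{t,x}$ and $L^\infty_t L^1_x$ with $\theta=\frac1{1+\nu}$, then verify the constraints of Theorem \ref{thm:local} — and indeed this matches the paper's skeleton. But there is a genuine gap at the crucial point: you claim $\|\omega\|_{L^\infty_t L^1_x(Q_3)} \lesssim \|\grad u\|_{L^\infty_t L^1_x(Q_3)} \lesssim \|\grad u\|_{L^2_{t,x}(Q_3)}$ ``by Cauchy--Schwarz in space,'' but the second inequality is false. Cauchy--Schwarz in space on the ball $B_3$ gives $\|f(t)\|_{L^1_x}\lesssim\|f(t)\|_{L^2_x}$ for each fixed $t$, hence $\|\grad u\|_{L^\infty_t L^1_x}\lesssim\|\grad u\|_{L^\infty_t L^2_x}$; there is no Hölder-type inequality that dominates an $L^\infty$-in-time norm by an $L^2$-in-time norm on an interval. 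The quantity $\|\grad u\|_{L^\infty_t L^2_x(Q_3)}$ is simply not controlled by the hypothesis $\delta\int_{Q_3}|\grad u|^2\le\eta_2$, and for a Leray--Hopf solution there is no a priori $L^\infty_t$ bound on $\grad u$ at all.

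The paper's bound $\delta\|\omega\|_{L^\infty(-4,0;L^1(B_2))}\le C\eta_2$ (equation \eqref{eqn:LinfL1}) is not a soft embedding; it is a PDE estimate. One takes the inner product of the vorticity equation \eqref{eqn:vorticity} with $\omega^0=\omega/|\omega|$, uses the convexity inequality $\omega^0\cdot\La\omega\le\La|\omega|$ to get the transport--diffusion inequality \eqref{eqn:|omega|} for $|\omega|$, tests against a cutoff $\psi$ with $\ind{Q_2}\le\psi\le\ind{Q_3}$, and obtains $\frac{\d}{\d t}\int\psi|\omega|\lesssim 1+\int_{B_3}(|u|^2+|\grad u|^2)$. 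Here the mean-zero hypothesis \eqref{eqn:mean-zero-property-2} is used \emph{substantively} (not merely noted as satisfied): it is what allows Poincar\'e's inequality to replace $\int|u|^2$ by $\int|\grad u|^2$. Integrating this differential inequality in time converts the space-time $L^2$ smallness of $\grad u$ into the $L^\infty_t L^1_x$ bound on $\omega$, with the correct $\eta_2/\delta$ scaling so that the subsequent interpolation with $\|\omega\|_{L^p}\lesssim\delta^\nu\eta_2$ makes the powers of $\delta$ cancel. Your proposal replaces this entire argument with an inequality that does not hold, and it also never invokes the mean-zero condition beyond noting it is inherited, so this part would need to be rebuilt from scratch along the paper's lines.
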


\begin{proof}
First, we claim that 
\begin{align}
\label{eqn:LinfL1}
    \delta \| \omega \| _{L ^\infty (-4, 0; L ^1 (B _2))} \le C \eta _2.
\end{align}
Formally, we can take the dot product of both sides of the vorticity equation \eqref{eqn:ns} with $\omega ^0 := \frac{\omega}{|\omega|}$, and recalling the convexity inequality $\omega ^0 \cdot \La \omega \le \La |\omega|$, we have
\begin{align}
\label{eqn:|omega|}
    (\pt + u \cdot \grad - \La) |\omega| - \omega \cdot \grad u \cdot \omega^ 0 \le 0.
\end{align}
Let $\psi \in \cci ((-9, 0] \times \R ^3)$ be a cut-off function such that $\ind{Q _2} \le \psi \le \ind{Q _3}$. Multiply \eqref{eqn:|omega|} by $\psi$ and then integrate in space,
\begin{align*}
\frac{\d}{\d t} \int \psi |\omega| \d x
    &\le \int \left[
        (\pt + u \cdot \grad + \La) \psi
    \right] |\omega| \d x 
    + 
    \int \psi \omega \cdot \grad u \cdot \omega ^0 \d x \\
    &\le C \int _{B _3} 1 + |u| ^2 + |\grad u| ^2 \d x \le C \left(1 + \int _{B _3} |\grad u| ^2 \d x\right).
\end{align*}
for some large universal constant $C > 1$. The last step uses Poincar\'e's inequality and \eqref{eqn:mean-zero-property-2}. Integrate in time we obtain 
\begin{align*}
    \| \omega \| _{L ^\infty (-4, 0; L ^1 (B _2))} \le C \left(1 + \frac{\eta _2}\delta \right) \le 2 C \frac{\eta _2}\delta.
\end{align*}
This proves the claim. A more rigorous proof can be obtained by difference quotient same as in Constantin \cite{Constantin1990} or Lions \cite{Lions1996} Theorem 3.6, so we omit the details.

Now we interpolate between \eqref{eqn:Lp-smallness} and \eqref{eqn:LinfL1}. 
Let $\theta = \frac1{1+\nu}$,
\begin{align*}
    \| \omega \| _{\pq2 (Q _2)} \le
    \| \omega \| _{L ^p (Q _2)} ^\theta 
    \| \omega \| _{L ^\infty _t L ^1 _x (Q _2)} ^{1 - \theta} \le (2 C) ^{1-\theta} \eta _2 \delta ^{\theta \nu + \theta - 1} \le \frac12 \eta _1,
\end{align*}
where we choose $\eta _2 = \frac{\eta _1}{4C + 1} \le \frac12 \eta _1$ from Theorem \ref{thm:local}, 
and $p _2, q _2$ are determined by
\begin{align*}
    \frac1{p _2} = \frac\theta p, \qquad \frac1{q _2} = \frac\theta p + 1 - \theta.
\end{align*}
Combine the above with \eqref{eqn:Lp-smallness} we have
\begin{align}
    \label{eqn:local-requirement2}
    \| \grad u \| _{L ^p _t L ^p _x (Q _2)}  + \| \omega \| _{\pq2 (Q _2)} \le \frac12 \eta _1 + \frac12 \eta _1 = \eta _1.
\end{align}
By the choice of $\theta$ and the range of $\nu$,
\begin{align*}
    \frac1p + \frac1{p _2} &= \frac1p + \frac1{p (1 + \nu)} = \frac{2 + \nu}{p (1 + \nu)} < 1, \\
    \frac1p + \frac1{q _2} &= \frac1p + \frac{1 + \nu p}{p (1 + \nu)} = \frac{2 + \nu + \nu p}{p (1 + \nu)} \le \frac76.
\end{align*}
One can also easily check that $p < 2$ implies $q _2 < 2$, and thus by \eqref{eqn:mean-zero-property-2} and \eqref{eqn:local-requirement2} the requirements of the Local Theorem \ref{thm:local} are satisfied with $p _1 = q _1 = p$, and it completes the proof of the lemma.
\end{proof}





Now we transform this lemma into the global coordinate. Recall that $Q _\e (t, x)$ is defined by \eqref{eqn:def-Qe}.

\begin{corollary}
\label{cor:eta3}
There exists $\eta _3 > 0$ such that the following holds. If for some $\delta \le \eta _2$,
\begin{align}
    \label{eqn:eta2}
    \delta ^{-2\nu} \left(
        \fint _{Q _\e (t, x)} |\grad u| ^p \d x \d t
    \right)^\frac2p 
    + \delta \fint _{Q _\e (t, x)} |\grad u| ^2 \d x \d t \le \eta _3 \e ^{-4},
\end{align}
then
\begin{align*}
    |\grad ^n \omega (t, x)| \le C _n \e ^{-n-2}.
\end{align*}
\end{corollary}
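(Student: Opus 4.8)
The plan is to undo the blow-up: apply Lemma~\ref{lem:pivot} to the rescaled, Galilean-boosted solution built from $u$ along the mollified trajectory through $(t,x)$. Fix $(t,x)$; the statement is only meaningful when $Q_\e(t,x)\subset(0,T)\times\R^3$ (equivalently $t>9\e^2$, $t<T$), which we assume. Let $X(\cdot)$ solve $\dot X(s)=u_\e(s,X(s))$ with $X(t)=x$, and define $(\tilde u,\tilde P)$ by \eqref{eqn:tildeu} with $t_0=t$; by the parabolic scaling and Galilean invariance recalled after \eqref{eqn:tildeu}, $(\tilde u,\tilde P)$ is again a suitable weak solution of \eqref{eqn:ns} on $(-9,0)\times\R^3$. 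Under the associated change of variables $(s,y)\mapsto(t',x')=(t+\e^2 s,\,X(t')+\e y)$ — which carries $Q_3$ onto $Q_\e(t,x)$ and, being for each fixed time a dilation composed with a translation, multiplies Lebesgue measure by $\e^5$ — one has $\grad_y^n\tilde\omega(s,y)=\e^{n+2}(\grad^n\omega)(t',x')$ and $|\grad_y\tilde u(s,y)|=\e^2|\grad u(t',x')|$.

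First I would check hypothesis \eqref{eqn:mean-zero-property-2} for $\tilde u$. Unwinding \eqref{eqn:tildeu} and the mollification convention $\phi_\e(z)=\e^{-3}\phi(-z/\e)$ gives, for a.e.\ $s\in(-9,0)$,
\[
    \int_{B_1}\tilde u(s,y)\phi(y)\d y=\e\bigl(u_\e(t',X(t'))-\dot X(t')\bigr)=0,
\]
precisely because $X$ is an integral curve of $u_\e$; this is the reason for moving the frame along the mollified flow. Next I would translate the smallness hypotheses: using $|\grad_y\tilde u|=\e^2|\grad u|$ and the measure-preserving change of variables,
\[
    \int_{Q_3}|\grad\tilde u|^p\d y\d s=|Q_3|\,\e^{2p}\fint_{Q_\e(t,x)}|\grad u|^p\d x\d t,
    \qquad
    \int_{Q_3}|\grad\tilde u|^2\d y\d s=|Q_3|\,\e^{4}\fint_{Q_\e(t,x)}|\grad u|^2\d x\d t.
\]
Since the two terms on the left of \eqref{eqn:eta2} are nonnegative, each is individually $\le\eta_3\e^{-4}$; feeding this in (and keeping the same $\delta$) gives $\delta^{-\nu}\bigl(\int_{Q_3}|\grad\tilde u|^p\bigr)^{1/p}\le|Q_3|^{1/p}\eta_3^{1/2}$ and $\delta\int_{Q_3}|\grad\tilde u|^2\le|Q_3|\,\eta_3$. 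Choosing $\eta_3$ small enough, depending only on $\eta_2$, $p$ and $\nu$, makes both quantities $\le\eta_2$, and the remaining requirement $\delta\le\eta_2$ is assumed. Hence Lemma~\ref{lem:pivot} applies to $\tilde u$ and yields $\|\grad^n\tilde\omega\|_{L^\infty(Q_{8^{-n-2}})}\le C_n$.

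Finally, the center $(s,y)=(0,0)$ corresponds to $(t',x')=(t,x)$ and lies in $\overline{Q_{8^{-n-2}}}$, so
\[
    |\grad^n\omega(t,x)|=\e^{-(n+2)}\,|\grad_y^n\tilde\omega(0,0)|\le\e^{-(n+2)}\,\|\grad^n\tilde\omega\|_{L^\infty(Q_{8^{-n-2}})}\le C_n\e^{-n-2},
\]
understood in the essential-sup / a.e.\ sense for merely suitable weak solutions (where only $n\in\{0,1\}$ occurs). The two steps I expect to need care are: (i) confirming that the \emph{time-dependent} Galilean boost together with the parabolic rescaling preserves suitability — the local energy inequality must transform correctly once the extra pressure term $x\cdot\ddot X$ is included, which is routine but not automatic; and (ii) the bookkeeping of the powers of $\e$ and the ensuing choice of $\eta_3$ in terms of $\eta_2$. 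Everything else is substitution into Lemma~\ref{lem:pivot}.
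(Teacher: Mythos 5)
Your proposal is correct and follows the same route as the paper's proof: define $\tilde u$ via \eqref{eqn:tildeu} with $t_0 = t$, observe that each term of \eqref{eqn:eta2} yields the corresponding condition of Lemma~\ref{lem:pivot} after the scaling $|\grad_y\tilde u|=\e^2|\grad u|$ (so the $\e^{-4}$ in \eqref{eqn:eta2} is exactly the right power to cancel), choose $\eta_3$ so that both rescaled quantities drop below $\eta_2$, and conclude by scaling $\grad^n_y\tilde\omega=\e^{n+2}\grad^n\omega$. You supply a few details the paper leaves implicit (the verification that the mollified-trajectory frame forces \eqref{eqn:mean-zero-property-2} for $\tilde u$, the Jacobian $\e^5$, and the flag that suitability must survive the time-dependent Galilean boost), but these do not change the argument.
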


\begin{proof}
Define $\tilde u$ by \eqref{eqn:tildeu}. Then \eqref{eqn:eta2} implies
\begin{align*}
    \delta ^{-2\nu} \left(
        \fint _{Q _3} |\grad \tilde u| ^p \d x \d t
    \right)^\frac2p 
    \le \eta _3, &\qquad \delta \fint _{Q _3} |\grad \tilde u| ^2 \d x \d t \le \eta _3 \\
    \Rightarrow \delta ^{-\nu} \left(
        \int _{Q _3} |\grad \tilde u| ^p \d x \d t 
    \right) ^\frac1p \le \eta _3 ^\frac12 |Q _3| ^\frac1p, &\qquad \delta \int _{Q _3} |\grad \tilde u| ^2 \d x \d t \le \eta _3 |Q _3|.
\end{align*}
Moreover, \eqref{eqn:mean-zero-property-2} is satisfied by $\tilde u$. Therefore, if we choose $\eta _3$ such that 
\begin{align*}
    \max \left\lbrace 
        \eta _3 ^\frac12 |Q _3| ^\frac1p, \eta _3 |Q _3| 
    \right\rbrace = \eta _2,
\end{align*}
then by Lemma \ref{lem:pivot}, $\tilde \omega := \curl \tilde u$ has bounded derivatives at $(0, 0)$, and thus finish the proof of the corollary by scaling.
\end{proof}


Then we use the maximal function to go from the local bound to a global bound.

\begin{proof}[Proof of Theorem \ref{thm:main}]
First, we fix $\frac{11}6 < p < 2$, $\frac{2-p}{p-1} < \nu \le \frac{7p-12}{6-p}$. Let $\eta << 1$ be a small constant to be specified later. Finally we fix a $0 < \delta < \infty$. For $(t, x) \in (0, T) \times \R ^3$, define
\begin{align*}
    I (\e) = \e ^4 \left[ 
        \delta ^{-2\nu} \left(
            \fint _{Q _{\e} (t, x)} |\mm (\grad u)| ^p
        \right)^\frac2p 
        + \delta \fint _{Q _\e(t, x)} |\mm (\grad u)| ^2
    \right].
\end{align*}
If $(t, x)$ is both a $\mathcal Q$-Lebesgue point of $|\mm (\grad u)| ^p$ and of $|\mm (\grad u)| ^2$, then we claim that there exists a positive $\e = \e _{(t, x)}$ such that one of the two cases is true:
\begin{enumerate}[{\ttfamily C{a}se 1.}]
    \item $3 \e _{(t, x)} < t ^\frac12$, and $I (\e _{(t, x)}) = \eta$.
    \item $3 \e _{(t, x)} = t ^\frac12$, and $I (\e _{(t, x)}) \le \eta$.
\end{enumerate}
This is because by Theorem \ref{thm:lebesgue}
\begin{align*}
    \lim _{\e \to 0} I (\e) = 0 ^4 \left[ 
        \delta ^{-2 \nu} (|\mmu (t, x)| ^p) ^\frac2p + \delta |\mmu (t, x)| ^2
    \right] = 0,
\end{align*}
and $I (\e)$ is a continuous function of $\e$. 

One the one hand, in both cases we have $I (\e) \le \eta$, which implies that
\begin{align*}
    \delta ^{-\nu} \e ^2 \left(
        \fint _{Q _{\e} (t, x)} |\mm (\grad u)| ^p
    \right)^\frac1p \le \sqrt \eta, \qquad 
    \delta ^\frac12 \e ^2 \left(
        \fint _{Q _\e(t, x)} |\mm (\grad u)| ^2
    \right) ^\frac12 \le \sqrt \eta.
\end{align*}
If we set $\eta < \eta _0 ^2$, then depending on $\delta \ge 1$ or $\delta \le 1$, one of the two would imply admissibility condition \eqref{eqn:admissibility} by Jensen's inequality. Therefore $Q _\e (t, x)$ is admissible and
\begin{align*}
    I (\e) \le \e ^4 \left[
        \delta ^{-2\nu} \mmq (\mm(\grad u) ^p) ^\frac2p + \delta \mmq (\mm (\grad u) ^2)
    \right],
\end{align*}
so we can combine two canses and conclude
\begin{align}
    \label{eqn:e-4}
    \e ^{-4} _{(t, x)} \le \max \left\lbrace 
        \frac1\eta \left[ 
            \delta ^{-2\nu} \mmq (\mm(\grad u) ^p) ^\frac2p + \delta \mmq (\mm (\grad u) ^2)
        \right], 81 t ^{-2}
    \right\rbrace.
\end{align}


On the other hand, if we set $\eta < \eta _3$, then in both cases $I (\e) \le \eta _3$. If $\delta \le \eta _2$ one would have
\begin{align}
    \label{eqn:dnomega}
    |\grad ^n \omega (t, x)| \le C _n \e ^{-n-2}
\end{align}
by Corollary \ref{cor:eta3}. If $\delta > \eta_2$, notice that by Jensen's inequality, 
\begin{align*}
    \left(
            \fint _{Q _{\e} (t, x)} |\mm (\grad u)| ^p
        \right)^\frac2p 
    \le \fint _{Q _\e(t, x)} |\mm (\grad u)| ^2,
\end{align*}
so
\begin{align*}
    I (\e) &\ge \e ^4 \left[ 
        (\delta ^{-2\nu} + \delta - \eta _2) \left(
            \fint _{Q _{\e} (t, x)} |\mm (\grad u)| ^p
        \right)^\frac2p 
        + \eta _2 \fint _{Q _\e(t, x)} |\mm (\grad u)| ^2
    \right] \\
    &\ge \e ^4 \left[ 
        (1 - \eta _2) \left(
            \fint _{Q _{\e} (t, x)} |\mm (\grad u)| ^p
        \right)^\frac2p 
        + \eta _2 \fint _{Q _\e(t, x)} |\mm (\grad u)| ^2
    \right] \\
    &\ge (1 - \eta _2) \eta _2 ^{2 \nu} \e ^4 \left[ 
        \eta _2 ^{-2\nu} \left(
            \fint _{Q _{\e} (t, x)} |\mm (\grad u)| ^p
        \right)^\frac2p 
        + \eta _2 \fint _{Q _\e(t, x)} |\mm (\grad u)| ^2
    \right].
\end{align*}
If we require $\eta < (1 - \eta _2) \eta _2 ^{2 \nu} \eta _3$, then
\begin{align*}
    \e ^4 \left[ 
        \eta _2 ^{-2\nu} \left(
            \fint _{Q _{\e} (t, x)} |\mm (\grad u)| ^p
        \right)^\frac2p 
        + \eta _2 \fint _{Q _\e(t, x)} |\mm (\grad u)| ^2 
    \right] \le \eta _3.
\end{align*}
again by Corollary \ref{cor:eta3} we would still have \eqref{eqn:dnomega}. In conclusion, we choose
\begin{align*}
    \eta = \min \left\lbrace
        \eta _0 ^2, (1 - \eta _2) \eta _2 ^{2 \nu} \eta _3
    \right\rbrace ,
\end{align*}
then for any $0 < \delta < \infty$ one would have
\begin{align*}
    |\grad ^n \omega (t, x)| ^\frac{4}{n+2} \le C _n ^\frac{4}{n+2} \max \left\lbrace 
        \frac1\eta \left[ 
            \delta ^{-2\nu} \mmq (\mm(\grad u) ^p) ^\frac2p + \delta \mmq (\mm (\grad u) ^2)
        \right], 81 t ^{-2}
    \right\rbrace
\end{align*}
by putting \eqref{eqn:dnomega} and \eqref{eqn:e-4} together. Denote $f = |\grad ^n \omega| ^\frac{4}{n+2}$, and we denote $f _1 = \mmq (\mm(|\grad u|) ^p) ^\frac2p$, $f _2 = \mmq (\mm (|\grad u|) ^2)$. Then we have almost everywhere
\begin{align*}
    f \inds{f > C _n t ^{-2}} \lesssim _n \delta ^{-2 \nu} f _1 + \delta f _2.
\end{align*}
By Theorem \ref{thm:maximal-function},
\begin{align*}
    \| f _1 \| _{L ^1} &\le C _p \| \mm (\grad u) ^p \| _{L ^\frac2p} ^\frac2p \lesssim C _p \|\mm (\grad u) ^2 \| _{L ^1} \le C _p \|\grad u\| _{L ^2} ^2, \\
    \| f _2 \| _{L ^{1, \infty}} &\le C _1 \| \mm (\grad u) ^2 \| _{L ^1} \le C _1 \|\grad u\| _{L ^2} ^2.
\end{align*}
Finally, by the interpolation between Lorentz spaces Lemma \ref{lem:lorentz}, 
\begin{align*}
    \| f \inds{f > C _n t ^{-2}} \| _{L ^{1, 1 + 2 \nu}} \lesssim _{p, n} \| \grad u \| _{L ^2 ((0, T) \times \R ^3)} ^2 \le \| u _0 \| _{L ^2 (\R ^3)} ^2.
\end{align*}
This proves the theorem for $q \ge 1 + 2 \nu$. Recall that 
$p$ can be arbitrarily chosen between $\frac{11}6$ and $2$, and $\nu$ can be chosen between $\frac{2-p}{p-1}$ and $\frac{7p-12}{6-p}$, so $\nu$ can be arbitrarily small, therefore we prove the theorem for any $q > 1$.
\end{proof}

Estimates on $\grad ^2 u$ can be obtained by a Riesz transform of $\La u = -\curl \omega$.

\begin{proof}[Proof of Corollary \ref{cor:L43}]
We can put $K \subset (t _0, T) \times B _R$ for some $t _0, T, R > 0$. Denote $Q = (t _0, T) \times B _{2R}$. Let $\rho \in C _c ^\infty (\R ^3)$ be a smooth spatial cut-off function between $\ind{B _R} \le \rho \le \ind{B _{2R}}$. Then 
\begin{align*}
    &\| \La (\rho u) \| _{L ^{\frac43,q} ((t _0, T) \times \R ^3)}
    \lesssim _\rho \| \La u \| _{L ^{\frac43,q} (Q)} 
    + \| \grad u \| _{L ^{\frac43,q} (Q)}
    + \| u \| _{L ^{\frac43,q} (Q)}.
\end{align*}
Since $\La u = -\curl \omega$, the case $n = 1$ of Theorem \ref{thm:main} gives
\begin{align*}
    \| \La u \inds{|\La u| > C _1 t ^{-\frac32}} \| _{L ^{\frac43, q} ((0, T) \times \R ^3)} \le C _q \| u _0 \| _{L ^2 (\R ^3)} ^\frac32,
\end{align*}
so
\begin{align*}
    \| \La u \| _{L ^{\frac43,q} (Q)} \le C _q \| u _0 \| _{L ^2 (\R ^3)} ^\frac32 + C _1 \| t ^{-\frac32} \| _{L ^{\frac43} (Q)} \lesssim C _q \| u _0 \| _{L ^2 (\R ^3)} ^\frac32 + C _1 \left(\frac{R ^3}{t _0}\right)^\frac34.
\end{align*}
As for lower order terms,
\begin{align*}
    \| \grad u \| _{L ^\frac43 (Q)} &\lesssim \| \grad u \| _{L ^2 (Q)}, \\
    \| u \| _{L ^\frac43 (Q)} &\le \| u \| _{L ^\infty _t L ^2 _x (Q)}.
\end{align*}
For Leray-Hopf solution, $\|\grad u\| _{L ^\infty _t L ^2 _x \cap L ^2 _t \dot H ^1 _x ((0, T) \times \R ^3)} \le \|u _0\| _{L ^2}$, so
\begin{align*}
    \| \La (\rho u) \| _{L ^{\frac43,q} ((t _0, T) \times \R ^3)} &\lesssim _{q, K} 
    \| u _0 \| _{L ^2 (\R ^3)} ^\frac32 + 1 + 
    \| u _0 \| _{L ^2 (\R ^3)} \lesssim \| u _0 \| _{L ^2 (\R ^3)} ^\frac32 + 1.
\end{align*}
Because Riesz transform is bounded from $L ^{\frac43, q} ((t _0, T) \times \R ^3)$ to itself by Lemma \ref{lem:riesz-lorentz}, 
\begin{align*}
    \| \grad ^2 u \| _{L ^{\frac43,q} (K)} \le \| \grad ^2 (\rho u) \| _{L ^{\frac43,q} (Q)} \lesssim _{q, K} \| u _0 \| _{L ^2 (\R ^3)} ^\frac32 + 1.
\end{align*}
\end{proof}

\begin{remark}
For smooth solutions to the Navier-Stokes equation, we have $L ^{1, q}$ estimate for the third derivatives for any $q > 1$,
\begin{align*}
    \left\|
        \grad ^2 \omega \inds{|\grad ^2 \omega| > C t ^{-2}}
    \right\| _{L ^{1, q} ((0, T) \times \R ^3)} \le C _q \| u _0 \| _{L ^2} ^2.
\end{align*}
\end{remark}

\section{Local Study: Part One, Initial Energy}

The following three sections are dedicated to the proof of the Local Theorem \ref{thm:local}. In \cite{Vasseur2010}, the proof of the local theorem consists of the following three parts:
\begin{enumerate}[{\ttfamily Step 1.}]
    \item Show the velocity $u$ is locally small in the energy space $\mathcal E = L ^{\infty} _t L ^2 _x \cap L ^2 _t H ^1 _x$.
    \item Use De Giorgi iteration and the truncation method developed in \cite{Vasseur2007} to show $u$ is locally bounded in $L ^\infty$.
    \item Bootstrap to higher regularity by differentiating the original equation.
\end{enumerate}
In our case, directly working with $u$ is difficult due to the lack of control on the pressure, which is nonlocal. Therefore, we would like to work on vorticity, whose evolution is governed by \eqref{eqn:vorticity} and only involves local quantities. Since $\omega$ is one derivative of $u$, we have less integrability to do any parabolic regularization, and we don't have the local energy inequality to perform De Giorgi iteration. This motivates us to work on minus one derivative of $\omega$, but instead of $\omega$ we use a localization of $\omega$. Similar as \cite{Chamorro2018}, we introduce a new local quantity
\begin{align*}
\boxed{
    v := -\curl \vps \La \inv \vp \curl u = -\curl \vps \La \inv \vp \omega.
}
\end{align*}
where $\vp$ and $\vps$ are a pair of fixed smooth spatial cut-off functions, which are defined between $\ind{B _\frac65} \le \vp \le \ind{B _\frac54}$, $\ind{B _\frac43} \le \vps \le \ind{B _\frac32}$. This $v$ is divergence free and compactly supported. It will help us get rid of the pressure $P$, while staying in the same space as $u$: it scales the same as $u$, has the same regularity, inherit a local energy inequality from $u$, and its evolution only depends on local information. We will follow the same three steps above, but we will work on $v$ instead of $u$.

For convenience, from now on we will use $\eta$ to denote a small universal constant depending only on the smallness of $\eta _1$, such that $\lim _{\eta _1 \to 0} \eta = 0$. Similar as the constant $C$, the value of $\eta$ may change from line to line. The purpose of this section is to obtain the smallness of $v$ in the energy space $\mathcal E$, which is the following proposition. 
\begin{proposition}
\label{prop:v-L-inf-L-2}
Under the same assumptions of the Local Theorem \ref{thm:local}, we have
\begin{align}
    \label{eqn:v-L-infty-L-2}
    \| v \| _{\mathcal E (Q _1)} ^2 = \sup _{t \in (-1, 0)} \int _{B _1} |v (t)| ^2 \d x + \int _{Q _1} |\grad v| ^2 \d x \le \eta.
\end{align}
\end{proposition}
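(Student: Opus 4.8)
The plan is to derive an energy inequality for $v$ on the cylinder $Q_1$ and absorb all source terms using the smallness assumption \eqref{eqn:compensated-integrability}. First I would write down the evolution equation for $v$. Since $v = -\curl \vps \La\inv \vp \omega$, I apply $-\curl \vps \La\inv \vp$ to the vorticity equation \eqref{eqn:vorticity}, $\pt \omega + u\cdot\grad\omega - \omega\cdot\grad u = \La\omega$. The operator $-\curl \vps\La\inv\vp$ does not commute with $\pt$ trivially (it does, actually, since it is time-independent) but it does not commute with $\La$ or with the transport and stretching terms; the commutators are controlled exactly by the identities \eqref{cm1}--\eqref{cm4} and Lemmas \ref{lem-commutator}, \ref{lem:smoothing}. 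The upshot should be an equation of the schematic form $\pt v + u\cdot\grad v - \La v = \div F + G$, where $F$ and $G$ collect (i) commutator terms between the cutoffs and $\La$, which are lower-order and supported in the annulus where $\grad\vp$ or $\grad\vps$ live, hence smoothing by Lemma \ref{lem:smoothing}, and (ii) genuinely quadratic terms coming from $u\cdot\grad\omega$ and $\omega\cdot\grad u$ rewritten via \eqref{vc3} and pushed through the operator. On $B_1$, where $\vp \equiv \vps \equiv 1$, $v$ differs from $-\curl\La\inv\omega = \Pcurl u$ by a term that is smooth (again Lemma \ref{lem:smoothing}), so $v$ is genuinely a good localized substitute for $u$.

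Next I would run the standard energy estimate: multiply the equation for $v$ by $v \chi(t)$ for a temporal cutoff $\chi$ with $\ind{(-1,0)} \le \chi \le \ind{(-2,0)}$ (or by $v$ times a full space-time cutoff if $v$ is not already compactly supported in space — it is, by construction), integrate over $(-2,0)\times\R^3$, and integrate by parts. This produces $\sup_t \frac12\int |v|^2 + \int|\grad v|^2$ on the left, against $\int \partial_t\chi\, |v|^2$, the transport term $\int u\cdot\grad v\, v\,\chi = -\frac12\int (\div u)|v|^2\chi = 0$, and the source terms $\int F\cdot\grad v\,\chi + \int G\cdot v\,\chi$. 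The source terms are estimated by Hölder and Young: the $F\cdot\grad v$ term is absorbed into $\frac12\int|\grad v|^2$ at the cost of $\|F\|_{L^2_{t,x}}^2$, and $\|F\|_{L^2}$, $\|G\|_{L^2}$ on $Q_2$ are bounded by $\|\grad u\|_{\pq1(Q_2)}^2 + \|\omega\|_{\pq2(Q_2)}^2 \lesssim \eta_1^2$ together with the smoothing/commutator bounds — here one uses that the Hölder pair $\frac1{p_1}+\frac1{p_2}<1$ and $\frac1{q_1}+\frac1{q_2}\le\frac76$ (plus Sobolev embedding $\dot W^{1,q_1}\hookrightarrow L^{q_1^*}$, $\frac1{q_1^*}=\frac1{q_1}-\frac13$) is exactly what makes the quadratic terms like $\omega\otimes u$ lie in $L^1_{t,x}$ with a bound $\lesssim \eta_1^2$; the $\frac76$ is the Sobolev budget for one factor to gain the integrability the other lacks. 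One also needs to bound $\|v\|$ itself in lower norms appearing in lower-order source terms — this is bootstrapped from the same energy quantity via Sobolev/Ladyzenskaya, or handled by noting those terms are smoothing and only see $\|\omega\|_{L^1}$ locally.

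The main obstacle I expect is bookkeeping the commutator terms and verifying that every quadratic source term genuinely closes under the stated Hölder exponents — in particular handling the vortex-stretching term $\omega\cdot\grad u$, which after passing through $-\curl\vps\La\inv\vp$ and using \eqref{vc3} to trade a $\grad u$ for $\omega$ plus a pressure-like gradient, must be written as (a derivative of) $\omega\otimes u$ or $u\otimes\omega$ so that the derivative can be moved onto $v$ in the energy estimate rather than landing on $u$. The subtlety flagged in the introduction — that $u\cdot\grad u = \omega\times u + \grad(|u|^2/2)$ and one should keep $u$ in $L^{2-}_tL^{6-}_x$ and $\omega$ in $L^{2+}_tL^{2-}_x$ — is precisely what the exponent conditions $\frac1{p_1}+\frac1{p_2}<1$, $\frac1{q_1}+\frac1{q_2}\le\frac76$ encode, so the pairing is not an equality case and there is room to absorb. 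A secondary technical point is that the full energy inequality for $v$ should be derived rigorously from the local energy inequality that $\omega$ (or $u$) inherits as a suitable weak solution, rather than formally; as the excerpt does for \eqref{eqn:LinfL1}, this can be done by a difference-quotient / Steklov-averaging argument, and I would either carry that out or cite it. Modulo these, choosing $\eta_1$ small enough makes the right-hand side $\le\eta$, giving \eqref{eqn:v-L-infty-L-2}.
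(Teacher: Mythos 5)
Your strategy — derive an equation for $v$, run an energy estimate, absorb sources using the smallness hypothesis — is the same as the paper's, and you correctly identify that the exponent conditions encode a Sobolev budget for making the quadratic terms integrable. But there are two gaps in the execution that, as written, would make the energy estimate fail to close.

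First, the absorption-by-Young step does not work at the level you propose. You write the equation schematically as $\pt v + u\cdot\grad v - \La v = \div F + G$, test against $v\chi(t)$ (temporal cutoff only, since $v$ is already compactly supported), move the divergence onto $v$, and claim $\int F:\grad v\,\chi$ can be absorbed into $\frac12\int|\grad v|^2$ at the cost of $\|F\|_{L^2_{t,x}}^2$. The quadratic ``pressure-like'' contribution here is $F\approx \RR(u\otimes v)$, and the available integrability is $u\in L^{p_1}_t L^{q_3}_x$, $v\in L^{p_2}_t L^{q_4}_x$ with only $\frac1{p_1}+\frac1{p_2}<1$ and $\frac1{q_3}+\frac1{q_4}\le\frac12$. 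The spatial Hölder closes to $L^2_x$, but in time one only gets $L^{r}_t$ with $r>1$, not $L^2_t$. So $\|F\|_{L^2_{t,x}}$ is not controlled, and the Young absorption against $\|\grad v\|_{L^2_{t,x}}^2$ fails. The paper avoids this by not moving the derivative onto $v$ at all: using $\div v = 0$, $v\cdot\grad\RR(u\otimes v) = \div\bigl(v\,\RR(u\otimes v)\bigr)$, a pure flux, and then testing against $\vp^4 v$ so that the derivative falls on $\vp^4$. The resulting term $I_\RR = \int (v\cdot\grad\vp^4)\,\RR(u\otimes v)\,\d x \lesssim \|\vp^2 v\|_{L^2_x}\,\|u\otimes v\|_{L^2_x}$ is pointwise in $t$, and its time integral is controlled because $\|u\otimes v\|_{L^2_x}\in L^1_t$. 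The estimate then closes by Grönwall against $\Phi(t)\in L^1_t$ rather than one-shot absorption. This is the substantive reason the paper uses a spatial cutoff $\vp^4$ even though $v$ already has compact support.

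Second, getting the nonlinearity into that flux form is not automatic from the schematic splitting; it requires the localization decomposition $\vp u = v + w$, $\vp\omega = \curl v + \varpi$ (with $w,\varpi$ harmonic inside $B_1$), which lets one rewrite $\Pcurl(\vp\omega\times u) = \omega\times v + \grad\RR(u\otimes v) - \BW$. The term $\omega\times v$ is orthogonal to $v$, so it drops from the energy balance; the $\grad\RR$ term is the flux as above; and $\BW$ collects the harmonic-remainder interactions, which are genuinely lower order. You gesture at the right identity ($u\cdot\grad u = \omega\times u + \grad|u|^2/2$) but at the level of $u$, not $v$; carrying it through the operator $-\curl\vps\La\inv\vp$ and making the orthogonality visible is the content of the paper's ``Equations of $v$'' subsection. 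Finally, on rigor: the local energy inequality for $v$ is proved in the appendix not by a Steklov/difference-quotient argument but by an algebraic manipulation of the four equations for $\vp u$, $v$, $w$ (using that $w=\vp u - v$ is one derivative more regular, so the cross-pairings $w\cdot\La v$ etc.\ make sense) together with the given suitability of $u$; citing a generic Steklov argument would not be a substitute.
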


For convenience, define $q _3$, $q _4$, $q _5$ by
\begin{align*}
    \frac1{q _3} = \frac1{q _1} - \frac13, \qquad \frac1{q _4} = \frac1{q _2} - \frac13, \qquad
    \frac1{q _5} = \left(\frac1{q _3} - \frac13\right) _+.
\end{align*}

\subsection{Equations of v}
We use \eqref{vc1} in \eqref{eqn:ns} to rewrite the equation of $u$, then take the curl to rewrite the equation of $\omega$, finally apply $-\curl \vps \La \inv \vp$ on the vorticity equation to obtain the equation of $v$.
\begin{align}
    \notag 
    \pt u + \Pcurl (\omega \cross u) &= \La u, \\
    \notag
    \pt \omega + \curl (\omega \cross u) &= \La \omega, \\
    \label{eqn-v-original}
    \pt v - \curl \vps \La \inv \vp \curl (\omega \cross u) &= - \curl \vps \La \inv \vp \La \omega.
\end{align}
The second term of \eqref{eqn-v-original} is
\begin{align*}
    \curl \vps \La \inv \vp \curl (\omega \cross u) &= \BB - \Pcurl (\vp \omega \cross u)
\end{align*}
where $\BB$ denotes the quadratic commutator
\begin{align*}
    \BB &:= - \curl (1 - \vps) \La \inv \vp \curl (\omega \cross u) + \curl \La \inv [\vp, \curl] (\omega \cross u) \\
    &= - \curl (1 - \vps) \La \inv \vp \curl (\omega \cross u) + \curl \La \inv (-\grad \vp \cross (\omega \cross u))
\end{align*}
Here we used \eqref{cm1}. The right hand side of \eqref{eqn-v-original} is
\begin{align*}
    -\curl \vps \La \inv \vp \La \omega &= \La v + \BL
\end{align*}
where $\BL$ denotes the linear commutator
\begin{align*}
    \BL &:= 
    [-\curl \vps \La \inv \vp, \La] \omega \\
    &= -\curl [\vps \La \inv \vp, \La] \omega \\
    &= -\curl [\vps, \La] \La \inv \vp \omega -\curl \vps \La \inv [\vp, \La] \omega \\
    &= -\curl [\vps, \La] \La \inv \vp \omega + \curl \vps \La \inv \left(2 \div (\grad \vp \tensor \omega) - (\La \vp) \omega \right).
\end{align*}
Here we used \eqref{cm2}.
Therefore we have the equation for $v$ as the following,
\begin{align}
\label{eqn-v}
    \pt v + \Pcurl (\vp \omega \cross u) = \BB + \BL + \La v.
\end{align}
We observe the following localization decomposition.
\begin{lemma}
We can decompose
\begin{align*}
    \vp u = v + w, \qquad \vp \omega = \curl v + \varpi,
\end{align*}
where $w$ and $\varpi$ are harmonic inside $B _1$.
\end{lemma}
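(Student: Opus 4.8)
The plan is to take $w := \vp u - v$ and $\varpi := \vp \omega - \curl v$, so that the two decompositions hold by definition, and then to show that the distributional Laplacian of each vanishes on the \emph{open} ball $B _{\frac65}$; harmonicity on $B _1 \subset B _{\frac65}$ (in fact smoothness) will then follow from Weyl's lemma. The structural inputs are elementary: $\vps \equiv 1$ on $\supp \vp$ (since $\supp \vp \subset \overline{B _{\frac54}} \subset B _{\frac43}$), so $\vps \vp = \vp$; $\vp \equiv 1$ on $B _{\frac65}$, so $\vp u = u$, $\vp \omega = \omega$ and all derivatives of $\vp$ vanish there; $\grad \vps$ and $\La \vps$ are supported in $\overline{B _{\frac32}} \setminus B _{\frac43}$, hence vanish on $B _{\frac65}$; the algebraic identity $\curl \curl = \grad \div - \La$; and $\div u = 0$, $\div \omega = \div \curl u = 0$. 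Throughout, $\La \inv (\vp \omega)$ is understood as the spatial Newtonian potential of the compactly supported function $\vp \omega (t, \cdot) \in L ^2 (\R ^3)$, which is well defined for a.e.\ $t$ and satisfies $\La \La \inv = \Id$.

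The only nontrivial computation is $\La v$ near $\overline{B _1}$. By the Leibniz rule (equivalently \eqref{cm2}), and using $\La \La \inv = \Id$ in the first term,
\begin{align*}
    \La \left(\vps \La \inv \vp \omega\right) = \vps \vp \omega + 2 \grad \vps \cdot \grad \La \inv \vp \omega + (\La \vps) \La \inv \vp \omega ,
\end{align*}
and on $B _{\frac65}$ the last two terms vanish (being supported where $\grad \vps$, $\La \vps$ are) while $\vps \vp \omega = \omega$; hence, commuting $\La$ with $\curl$,
\begin{align*}
    \La v = -\curl \La \left(\vps \La \inv \vp \omega\right) = -\curl \omega \qquad \text{on } B _{\frac65}.
\end{align*}
Since $\vp u = u$ and $\vp \omega = \omega$ on the open set $B _{\frac65}$ we also have $\La (\vp u) = \La u$ and $\La (\vp \omega) = \La \omega$ there, so on $B _{\frac65}$
\begin{align*}
    \La w &= \La u - \La v = \La u + \curl \curl u = \La u + \grad \div u - \La u = 0, \\
    \La \varpi &= \La \omega - \curl \La v = \La \omega + \curl \curl \omega = \La \omega + \grad \div \omega - \La \omega = 0,
\end{align*}
using $\div u = 0$ and $\div \omega = 0$. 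Thus $w$ and $\varpi$ are harmonic on $B _{\frac65} \supset B _1$. (Alternatively, \eqref{cm1} gives $\curl w = \varpi + \grad \vp \cross u$, which equals $\varpi$ on $B _{\frac65}$, so harmonicity of $\varpi$ could instead be inherited from that of $w$.)

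I do not anticipate a real obstacle here: once the bookkeeping object $w := \vp u - v$ is chosen, the argument is forced by the cut-off supports together with \eqref{cm1}, \eqref{cm2}, the identity $\curl \curl = \grad \div - \La$, and the divergence-free conditions. The only two points requiring a sentence of justification are giving meaning to $\La \inv (\vp \omega)$ at fixed time (the Newtonian potential of a compactly supported $L ^2$ function --- this is where the integrability hypotheses on $u$, in particular $\omega \in L ^2$, enter) and the passage from ``$\La w = 0$ in $\mathcal D' (B _{\frac65})$'' to ``$w$ is harmonic (and smooth) on $B _1$'', which is exactly Weyl's lemma.
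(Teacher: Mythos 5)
Your proof is correct, and it takes a somewhat more economical route than the paper's for the harmonicity claim itself. The paper's proof expands $v = -\curl \vps \La\inv \vp \curl u$ by writing $\vps = 1 - (1 - \vps)$ and commuting $\vp$ past $\curl$ and $\Pcurl$ (via \eqref{cm1} and $\div u = 0$), arriving at explicit closed-form expressions
\begin{align*}
    w &= -\curl (1 - \vps) \La \inv \vp \omega - \curl \La \inv (\grad \vp \cross u) + \grad \La \inv (\grad \vp \cdot u), \\
    \varpi &= -\curl \curl (1 - \vps) \La \inv \vp \omega + \grad \La \inv (\grad \vp \cdot \omega),
\end{align*}
and then reads off harmonicity on $B_1$ from the fact that each summand is either a smooth field supported away from $B_{3/2}$ or a potential whose source is supported in $\supp (\grad \vp) \subset B_{5/4} \setminus B_{6/5}$. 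You instead take $w := \vp u - v$ and $\varpi := \vp \omega - \curl v$ as tautological definitions and compute $\La v = -\curl \omega$ directly on $B_{6/5}$ via the Leibniz expansion of $\La(\vps \La\inv \vp \omega)$, which localizes cleanly because $\grad \vps$ and $\La \vps$ vanish there; harmonicity of $w$ and $\varpi$ then drops out from $\curl \curl = \grad \div - \La$ together with $\div u = \div \omega = 0$. Both arguments are sound and both ultimately rest on the same support considerations. The one thing to flag: the paper's detour through explicit formulas for $w$ and $\varpi$ is not idle --- those formulas are immediately reused in Lemma \ref{lem:integrability-1} to derive the $L^p$ bounds on $w$, $\grad w$, $\grad^2 w$, $\varpi$, $\grad \varpi$ that drive the energy estimate. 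If you keep the tautological definitions, you would need to reproduce those formulas (or an equivalent computation) at that point anyway.
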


\begin{proof}
We can compute $v$ by
\begin{align*}
    v &= -\curl \vps \La \inv \vp \curl u \\
    &= \curl (1 - \vps) \La \inv \vp \curl u - \curl \La \inv \vp \curl u \\
    &= \curl (1 - \vps) \La \inv \vp \omega -\curl \La \inv [\vp, \curl] u + \Pcurl (\vp u) \\
    &= \curl (1 - \vps) \La \inv \vp \omega + \curl \La \inv (\grad \vp \times u)  - \Pgrad (\vp u) + \vp u \\
    &= \curl (1 - \vps) \La \inv \vp \omega + \curl \La \inv (\grad \vp \times u) - \grad \La \inv (\grad \vp \cdot u) + \vp u
\end{align*}
using $\div u = 0$. We denote
\begin{align*}
    w := -\curl (1 - \vps) \La \inv \vp \omega -\curl \La \inv (\grad \vp \times u) + \grad \La \inv (\grad \vp \cdot u),
\end{align*}
which implies the first decomposition $\vp u = v + w$. By taking the curl, 
\begin{align*}
    \curl (\vp u) &= \curl v + \curl w, \\
    \grad \vp \cross u + \vp \omega 
    &= \curl v - \curl \curl (1 - \vps) \La \inv \vp \omega - \curl \curl \La \inv (\grad \vp \cross u) \\
    &= \curl v - \curl \curl (1 - \vps) \La \inv \vp \omega + \Pcurl (\grad \vp \cross u).
\end{align*}
We denote
\begin{align*}
    \varpi &:= - \curl \curl (1 - \vps) \La \inv \vp \omega - \Pgrad (\grad \vp \cross u) \\
    &= - \curl \curl (1 - \vps) \La \inv \vp \omega - \grad \La \inv \div (\grad \vp \cross u) \\
    &= - \curl \curl (1 - \vps) \La \inv \vp \omega + \grad \La \inv (\grad \vp \cdot \omega)
\end{align*}
which implies the second decomposition $\vp \omega = \curl v + \varpi$. We can easily see that $\La w$ and $\La \varpi$ are both the sum of a smooth function supported outside $B _\frac32$ and the Newtonian potential of something supported inside $\operatorname{supp} (\grad \vp) \subset B _{\frac54} \setminus B _{\frac65}$, so they are harmonic inside $B _1$.
\end{proof}
Using this decomposition, we can continue to expand
\begin{align*}
    \Pcurl (\vp \omega \cross u) &= \vp \omega \cross u - \Pgrad(\vp \omega \cross u) \\
    &= \omega \cross v + \omega \cross w - \frac12 \Pgrad \left(
        (\curl v + \varpi) \cross u + \omega \cross (v + w)
    \right) \\
    &= \omega \cross v - \frac12 \Pgrad \left(
        \curl v \cross u + \omega \cross v
    \right) - \BW,
\end{align*}
where $\BW$ denotes the remainders involving $w$ and $\varpi$,
\begin{align*}
    \BW := -\omega \cross w + \frac12 \Pgrad \left(
        \varpi \cross u + \omega \cross w
    \right).
\end{align*}
By subtracting \eqref{vc3} from \eqref{vc1}, for divergence free $u$, $v$ we have
\begin{align*}
    \curl v \cross u + \curl u \cross v = -\grad (u \cdot v) + 2 u \cdot \grad v + \curl (u \cross v),
\end{align*}
so 
\begin{align*}
    \Pcurl (\vp \omega \cross u) &= \omega \cross v + \frac12 \grad (u \cdot v) - \Pgrad \div (u \tensor v) - \BW \\
    &= \omega \cross v + \grad \left(
        \frac12 u \cdot v - \La \inv \div \div (u \tensor v)
    \right)
    - \BW.
\end{align*}
\newcommand{\tr}{\operatorname{tr}}
For convenience, denote the Riesz operator
\begin{align*}
    \RR = \frac12 \tr - \La \inv \div \div
\end{align*}
Finally, we have the equation of $v$ as
\begin{align}
    \label{eqn-v-final}
    \pt v + \omega \cross v + \grad \RR (u \tensor v) = \BB + \BL + \BW + \La v, \qquad \div v = 0.
\end{align}

We now check the spatial integrability of these new terms. 
\begin{lemma}
    \label{lem:integrability-1}
    For any $1 < p < \infty$, 
    \begin{align*}
        &\| v \| _{L ^p}, \| \grad w \| _{L ^p}, \| \varpi \| _{L ^p} \lesssim \| \omega \| _{L ^1 (B _2)} + \| u \| _{L ^p (B _2)}, \\
        &\| \grad v \| _{L ^p}, \| \grad \varpi \| _{L ^p} \lesssim \| \omega \| _{L ^p (B _2)}, \\
        &\| \grad ^2 w \| _{L ^p} \lesssim \| u \| _{W ^{1,p}} (B _2).
    \end{align*}
    If we denote $q = \min \{1, \frac1p + \frac13\} _+ \inv$, then 
    \begin{align*}
        \| \BB \| _{L ^p (B _2)} &\lesssim \| \omega \cross u \| _{L ^q (B _2)}, \\
        \| \BL \| _{L ^p (B _2)} &\lesssim \| \omega \| _{L ^p (B _2)} , \\
        \| \BW \| _{L ^p (B _2)} &\lesssim \| \omega \cross w \| _{L ^p (B _2)} + \| \varpi \cross u \| _{L ^p (B _2)} .
    \end{align*}
\end{lemma}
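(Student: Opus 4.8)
\emph{Strategy.} The key preliminary observation is that $v$, $w$ and $\varpi$ are all supported in $\overline{B _{3/2}}$: for $v$ this is forced by the outermost cut-off $\vps$, and for $w = \vp u - v$ and $\varpi = \vp \omega - \curl v$ it follows because each is a difference of functions supported in $\overline{B _{5/4}}$ and $\overline{B _{3/2}}$. Consequently all norms in the statement are really norms over a fixed bounded ball, and one may freely insert a fixed $\chi \in \cci (B _2)$ with $\chi \equiv 1$ on $B _{3/2}$ in front of these quantities. The plan is then to expand each quantity --- using the commutator identities \eqref{cm1}--\eqref{cm4}, the vector calculus identities \eqref{vc1}, \eqref{vc3}, and the decompositions $\vp u = v + w$, $\vp \omega = \curl v + \varpi$ --- into a finite sum of terms of the schematic form $(\text{cut-off}) \, T \, (\text{cut-off}) \, f$, with $f \in \{u, \omega, \omega \cross u\}$ and $T$ a composition of $\La \inv$ with derivatives, and to place each term into one of three categories. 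If $T$ has order $\le 0$ (i.e.\ $T \in \{\Pcurl, \Pgrad, \grad ^2 \La \inv, \grad \curl \La \inv, \curl \curl \La \inv, \curl \La \inv \div, \ldots\}$ is a Calder\'on--Zygmund operator), then $T$ is bounded $L ^p \to L ^p$ by Lemma \ref{lem:riesz-lorentz} (or classical theory) and the term is $\lesssim \| f \| _{L ^p (\supp \vp)}$. If $T$ is genuinely smoothing of negative order ($\La \inv$, $\grad \La \inv$, or $\curl \La \inv$) applied to a compactly supported $L ^r$ function, then by Hardy--Littlewood--Sobolev it maps into $L ^{r ^*} _{\loc}$ with $\frac1{r ^*} = \frac1r - \frac13$ for $1 < r < 3$, and into $L ^r _{\loc}$ for $r < \frac32$ by Young's inequality (the kernel $|x| ^{-2} \in L ^r _{\loc} (\R ^3)$ iff $r < \frac32$); this is what converts $\| \omega \cross u \| _{L ^q}$ into $\| \BB \| _{L ^p}$. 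Finally, if the two cut-offs have disjoint supports (possibly after a derivative falls on a cut-off so as to create such separation), then $T$ acts at positive distance from the support of the inner cut-off and Lemma \ref{lem:smoothing} bounds the term in $C ^k$, hence in $L ^p (B _2)$, by $\| f \| _{L ^1 (\supp \vp)} \lesssim \| f \| _{L ^p (B _2)}$ via H\"older.

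\emph{The quantities built from $u$ and $\omega$.} For $v$ and $\grad v$, one uses $[\vp, \curl] u = -\grad \vp \cross u$ to write $\vps \La \inv (\vp \omega) = \vps (\curl \La \inv (\vp u) - \La \inv (\grad \vp \cross u))$, so that $v$ becomes $\vps \Pcurl (\vp u)$ (Calder\'on--Zygmund, bounded by $\| u \| _{L ^p}$) plus terms in which the factor $\vps$ or $\grad \vps$, supported in $B _{3/2} \setminus B _{4/3}$ and hence disjoint from $\supp \vp \subset B _{5/4}$, puts us in the third category; $\grad v$ is the same except that one more derivative yields the CZ operator $\grad \curl \La \inv$ acting on $\vp \omega \in L ^p$, giving the bound $\| \omega \| _{L ^p (B _2)}$. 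For $\varpi = -\curl \curl (1 - \vps) \La \inv (\vp \omega) + \grad \La \inv (\grad \vp \cdot \omega)$, the crucial point is that $\div u = \div \omega = 0$ allow us to write $\grad \vp \cdot \omega = \div (u \cross \grad \vp)$, so the second term is $\Pgrad (u \cross \grad \vp)$, a CZ operator applied to $u$ rather than to $\omega$; the first term lies in the third category because of the $(1 - \vps)$ factor. For $w = \Pgrad (\vp u) - \curl (1 - \vps) \La \inv (\vp \omega) - \curl \La \inv (\grad \vp \cross u)$, again $\div u = 0$ gives $\Pgrad (\vp u) = \grad \La \inv (\grad \vp \cdot u)$, so $\grad \Pgrad (\vp u) = \grad ^2 \La \inv (\grad \vp \cdot u)$ is CZ of $u$ and $\grad ^2 \Pgrad (\vp u) = \grad ^2 \La \inv (\grad (\grad \vp \cdot u))$ is CZ of $\grad \vp \cdot u \in W ^{1, p}$ --- this is exactly why $\| \grad w \| _{L ^p}$ requires only $u$ whereas $\| \grad ^2 w \| _{L ^p}$ requires $\| u \| _{W ^{1,p}}$; the remaining pieces of $w$ are handled by keeping $\grad \curl \La \inv$ or $\grad ^2 \La \inv$ intact as CZ operators (rewriting $\curl (\grad \vp \cross u)$ by \eqref{vc3} and $\div u = 0$ as $-(\La \vp) u + (u \cdot \grad) \grad \vp - (\grad \vp \cdot \grad) u$ when two derivatives must land on $\La \inv$) and by the third category for the $(1-\vps)$ term.

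\emph{The commutators.} For $\BL$ one expands $[\vps, \La]$ and $[\vp, \La]$ via \eqref{cm2}: the piece $-\curl [\vps, \La] \La \inv (\vp \omega)$ has all cut-offs disjoint from $\supp \vp$, hence is $\lesssim \| \omega \| _{L ^1 (B _2)}$ (third category); $2 \curl \vps \La \inv \div (\grad \vp \tensor \omega)$ has $\curl \La \inv \div$ Calder\'on--Zygmund applied to $\grad \vp \tensor \omega \in L ^p$; and $\curl \vps \La \inv ((\La \vp) \omega)$ is second category with $r = p$. For $\BB = -\curl (1 - \vps) \La \inv \vp \curl (\omega \cross u) + \curl \La \inv (-\grad \vp \cross (\omega \cross u))$, applying \eqref{cm1} to the inner curl makes the first summand third category, $\lesssim \| \omega \cross u \| _{L ^1 (B _2)} \lesssim \| \omega \cross u \| _{L ^q (B _2)}$, while the second is $\curl \La \inv$ applied to the compactly supported $\grad \vp \cross (\omega \cross u) \in L ^q$, which by the second category lies in $L ^p (B _2)$ precisely when $\frac1p \ge \frac1q - \frac13$ (for $q > 1$), or when $q = 1$ and $p < \frac32$ --- exactly the content of $\frac1q = \min \{1, \frac1p + \frac13 \}$. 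For $\BW = -\omega \cross w + \frac12 \Pgrad (\varpi \cross u + \omega \cross w)$, the functions $\varpi \cross u$ and $\omega \cross w$ are compactly supported (since $w$, $\varpi$ are), so the $L ^p \to L ^p$ boundedness of $\Pgrad$ gives $\| \Pgrad (\varpi \cross u + \omega \cross w) \| _{L ^p} \lesssim \| \varpi \cross u \| _{L ^p (B _2)} + \| \omega \cross w \| _{L ^p (B _2)}$ with no further work.

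\emph{Main obstacle.} The argument is essentially bookkeeping; the delicate point throughout is to keep the number of derivatives that ultimately act on $u$ or $\omega$ within the budget of the target norm --- none beyond $u$ or $\omega$ for $v, \grad v, \varpi, \grad \varpi, \grad w$, and exactly one for $\grad ^2 w$. This is what forces the repeated use of the divergence-free identities $\grad \vp \cdot u = \div (\vp u)$, $\grad \vp \cdot \omega = \div (u \cross \grad \vp)$, and \eqref{vc3}, each of which transfers a derivative onto a cut-off before the potential-type operator acts, leaving an operator of order exactly $0$. The one genuinely analytic subtlety is the borderline $p = \frac32$ (forcing $q = 1$) in the $\BB$ estimate, where $\curl \La \inv$ of an $L ^1$ function is only weak-$L ^{3/2}$; this is harmless since only $\frac32 < p < 2$ (hence $q > 1$) is used in the applications, or one may simply exclude $p = \frac32$.
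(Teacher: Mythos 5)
Your proposal takes the same broad approach as the paper: expand $v, w, \varpi, \BB, \BL, \BW$ via the commutator identities \eqref{cm1}--\eqref{cm4}, and sort the resulting pieces into three categories (order-zero Calder\'on--Zygmund operators bounded on $L^p$, genuinely negative-order potentials handled by HLS or Young, and terms with disjoint cut-off supports handled by Lemma~\ref{lem:smoothing}). The paper's proof gives essentially the same term-by-term bookkeeping for $v$, $\grad v$, $\grad w$, $\grad^2 w$, $\varpi$, $\grad\varpi$ and then declares the $\BB, \BL, \BW$ cases ``similar'' and omits them; you fill in those omitted details correctly.

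There are two small differences worth noting, both favorable to your version. First, the paper bounds $\|v\|_{L^p}$ indirectly via $\|v\| \le \|\vp u\| + \|w\|$ followed by Poincar\'e for the compactly supported $w$, whereas you decompose $v$ directly as $\vps\Pcurl(\vp u)$ plus disjoint-support remainders; both work, and yours in fact gives the slightly cleaner bound $\|v\|_{L^p} \lesssim \|u\|_{L^p(B_2)}$ without the harmless $\|\omega\|_{L^1}$ term. Second, you explicitly flag the endpoint $p = \frac32$, where $q = 1$ and $\curl\La\inv$ maps $L^1$ only into weak $L^{3/2}$, so that the stated bound $\|\BB\|_{L^{3/2}(B_2)} \lesssim \|\omega\cross u\|_{L^1(B_2)}$ is strictly speaking a Lorentz-space statement rather than a strong $L^{3/2}$ one. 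The paper does not comment on this, and in fact invokes exactly this borderline estimate immediately after the lemma to conclude $\BB \in L^2_t L^{3/2}_{\loc,x}$; your observation that one should either pass through $L^{3/2,\infty}$ or interpolate the integrability of $\omega\cross u$ in $(t,x)$ more carefully is a legitimate refinement, and your remark that in the De Giorgi section only exponents $p > \frac32$ (hence $q > 1$) arise is the correct reassurance that nothing in the argument is actually at risk.
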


\begin{proof}
$v, w, \varpi$ are all supported inside $B _2$, so
\begin{align*}
    \| v \| _{L ^p} &\le \| \vp u \| _{L ^p} + \| w \| _{L ^p} \lesssim \| u \| _{L ^p (B _2)} + \| \grad w \| _{L ^p}, \\
    \| \grad w \| _{L ^p} &\le \| \grad \curl (1 - \vps) \La \inv \vp \omega \| _{L ^p (B _2)} 
    + \| \grad \curl \La \inv (\grad \vp \cross u) \| _{L ^p} \\
    &\qquad + \| \grad ^2 \La \inv (\grad \vp \cdot u) \| _{L ^p} \\
    & \lesssim \| (1 - \vps) \La \inv \vp \omega \| _{C ^2} + \| \grad \vp \cross u \| _{L ^p} + \| \grad \vp \cdot u \| _{L ^p} \\
    & \lesssim \| \omega \| _{L ^1 (B _2)} + \| u \| _{L ^p (B _2)}, \\
    \| \varpi \| _{L ^p} &\le \| \curl \curl (1 - \vps) \La \inv \vp \omega \| _{L ^p (B _2)} + \| \grad \La \inv (\grad \vp \cdot \omega) \| _{L ^p} \\
    & \le \| (1 - \vps) \La \inv \vp \omega \| _{C ^2} + \| \grad \La \inv \div (\grad \vp \cross u) \| _{L ^p} \\
    & \le \| \omega \| _{L ^1 (B _2)} + \| \grad \vp \cross u \| _{L ^p} \\
    & \le \| \omega \| _{L ^1 (B _2)} + \| u \| _{L ^p (B _2)}.
\end{align*}
Here we used Lemma \ref{lem:smoothing} since $\vp$ and $1 - \vps$ are supported away from each other, and we also used the boundedness of Riesz transform by Lemma \ref{lem:riesz-lorentz}. Their derivatives are bounded by
\begin{align*}
    \| \grad v \| _{L ^p} 
    &= \| \grad \curl \vps \La \inv \vp \omega \| _{L ^p} \\
    &\le \| \grad \curl \La \inv \vp \omega \| _{L ^p} + \| \grad \curl (1 - \vps) \La \inv \vp \omega \| _{L ^p (B _2)} \\
    &\lesssim \| \omega \| _{L ^p (B _2)} + \| \omega \| _{L ^1 (B _2)} \lesssim \| \omega \| _{L ^p (B _2)}, \\
    \| \grad ^2 w \| _{L ^p} &\le \| \grad ^2 \curl (1 - \vps) \La \inv \vp \omega \| _{L ^p (B _2)} 
    + \| \grad ^2 \curl \La \inv (\grad \vp \cross u) \| _{L ^p} \\
    &\qquad + \| \grad ^3 \La \inv (\grad \vp \cdot u) \| _{L ^p} \\
    & \lesssim \| \omega \| _{L ^1 (B _2)} + \| u \| _{W ^{1, p} (B _2)} \lesssim \| u \| _{W ^{1, p} (B _2)}, \\
    \| \grad \varpi \| _{L ^p} &\le \| \grad \curl \curl (1 - \vps) \La \inv \vp \omega \| _{L ^p (B _2)} + \| \grad ^2 \La \inv (\grad \vp \cdot \omega) \| _{L ^p} \\
    & \lesssim \| \omega \| _{L ^1 (B _2)} + \| \omega \| _{L ^p (B _2)} \lesssim \| \omega \| _{L ^p (B _2)}.
\end{align*}
The proof for $\BB, \BL, \BW$ are similar so we omit here.
\end{proof}

Since $u \in \mathcal E$, it can be seen from the above lemma that $v, \grad w, \varpi \in \mathcal E$, thus
\begin{align*}
    \| \BB \| _{L ^\frac32 (B _2)} &\lesssim \| \omega \cross u \| _{L ^1 (B _2)} \in L ^2 _t, \\
    \| \BL \| _{L ^2 (B _2)} &\lesssim \| \omega \| _{L ^2 (B _2)} \in L ^2 _t, \\
    \| \BW \| _{L ^\frac32 (B _2)} &\lesssim \| \omega \cross w \| _{L ^\frac32 (B _2)} + \| \varpi \cross u \| _{L ^\frac32 (B _2)} \in L ^2 _t,
\end{align*}
therefore $\BB, \BL, \BW \in L ^2 _t L ^\frac32 _{\loc,x}$. In the appendix we prove the suitability for $v$: it satisfies the following local energy inequality,
\begin{align}
\label{local-inequality}
    \pt \frac{|v| ^2}{2} + |\grad v| ^2 + \div \left[v
    \RR (u \tensor v)
    \right] 
    &\le \La \frac{|v| ^2}{2} + v \cdot (\BB + \BL + \BW).
\end{align}


\subsection{Energy Estimate}
Multiply \eqref{local-inequality} by $\vp ^4$ then integrate over $\R ^3$ yields
\begin{align*}
    &\frac{\d}{\d t} \int \vp ^4 \frac{|v| ^2}2 \d x 
    + \int \vp ^4 |\grad v| ^2 \d x \\
    &\le \int \frac{|v| ^2}2 \La \vp ^4 \d x
    + \int (v \cdot \grad \vp ^4) \RR (u \tensor v) \d x
    \\
    &\qquad + \int \vp ^4 v \cdot \BB \d x 
    + \int \vp ^4 v \cdot \BL \d x 
    + \int \vp ^4 v \cdot \BW \d x .
\end{align*}
Let us discuss these terms. For the first four terms on the right hand side,
\begin{align}
    I _\La &:= \int \frac{|v| ^2}2 \La \vp ^4 \d x
    \le C \|\vp ^2 v\| _{L ^2} \|v\| _{L ^2},
    \label{Ila}\\
    I _\RR &:= \int (v \cdot \grad \vp ^4) \RR (u \tensor v) \d x 
    \le C \|\vp ^2 v\| _{L ^2} \|\RR (u \tensor v)\| _{L ^2} 
    \label{IR} \\
    \notag
    &\le C \|\vp ^2 v\| _{L ^2} \|u \tensor v\| _{L ^2} ,\\
    I _\BB &:= \int \vp ^4 v \cdot \BB \d x \le \| \vp ^2 v \| _{L ^2} \| \vp ^2 \BB \| _{L ^2} 
    \label{IB} \\
    \notag
    & \le C \| \vp ^2 v \| _{L ^2} \| \omega \cross u \| _{L ^\frac65 (B _2)},
    \\
    I _\BL &:= \int \vp ^4 v \cdot \BL \d x \le \| \vp ^\frac23 |v| ^\frac13 \| _{L ^6} \| |v| ^\frac23 \| _{L ^{q _3}} \| \vp ^2 \BL \| _{L ^{q _2}} 
    \label{IL} \\
    \notag
    &\le \| \vp ^2 v \| _{L ^2} ^\frac13 
    \| v \| _{L ^{q _3}} ^\frac23
    \| \omega \| _{L ^{q _2} (B _2)}.
\end{align}
Here we use H\"older's inequality, $\vp$ is compactly supported in $B _2$ and
$
    \frac1{q_2} + \frac1{q _3} + \frac16 \le 1
$.
For the $\BW$ term,
\begin{align*}
    I _\BW &:= \int \vp ^4 v \cdot \BW \d x \\
    &= -\int \vp ^4 v \cdot \omega \cross w \d x + \frac12 \int \vp ^4 v \cdot  \Pgrad \left(
        \varpi \cross u + \omega \cross w
    \right) \d x \\
    &= -I _{\BW1} + \frac12 I _{\BW2}.
\end{align*}
For the first one, we break it as
\begin{align*}
    I _{\BW1} = \int \vp ^4 v \cdot \omega \cross w \d x &= \int \vp ^3 v \cross \curl v \cdot w \d x + \int \vp ^3 v \cdot \varpi \cross w \d x.
\end{align*}
Using \eqref{vc1},
\begin{align*}
    v \cross \curl v = \frac12 \grad |v| ^2 - (v \cdot \grad) v,
\end{align*}
we have
\begin{align*}
    \int \vp ^3 v \cross \curl v \cdot w \d x &= -\frac12 \int |v| ^2 \div (\vp ^3 w) \d x + \int v \cdot \grad (\vp ^3 w) \cdot v \d x \\
    &\le C \|\vp ^2 v\| _{L ^2} \left( 
        \|\grad w \tensor v\| _{L ^2} + \|w \tensor v\| _{L ^2}
    \right).
\end{align*}
The remaining is of lower order,
\begin{align*}
    \int \vp ^3 v \cdot \varpi \cross w \d x \le C \|\vp ^2 v\| _{L ^2} \|\varpi \cross w\| _{L ^2} 
\end{align*}
For the second one, 
\begin{align*}
    I _{\BW2} &= \int \Pgrad (\vp ^4 v) \cdot \left(
        \varpi \cross u + \omega \cross w
    \right) \d x\\
    &\le \| \Pgrad (\vp ^4 v) \| _{L ^6} \|\varpi \cross u + \omega \cross w \| _{L ^\frac65}
\end{align*}
where
\begin{align*}
    \| \Pgrad (\vp ^4 v) \| _{L ^6} &= 
    \| [\Pgrad, \vp ^2] \vp ^2 v \| _{L ^6} \lesssim \| \vp ^2 v\| _{L ^2}.
\end{align*}
So $I _\BW$ can be bounded by
\begin{align}
    I _\BW \le C \|\vp ^2 v\| _{L ^2} \left(
        \|\grad w \tensor v\| _{L ^2} 
        + \|\varpi \cross w\| _{L ^2} 
        + \|\varpi \cross u + \omega \cross w \| _{L ^\frac65}
    \right).
    \label{IW}
\end{align}

In summary, we conclude that for $-4 \le t \le 0$,
\begin{align}
    \label{energy-inequality}
    \frac{\d}{\d t} \int \vp ^4 \frac{|v| ^2}2 \d x 
    + \int \vp ^4 |\grad v| ^2 \d x
    &\le I _\La + I _\RR + I _\BB + I _\BL + I _\BW
\end{align}
with good estimates on each of the term on the right.

\subsection{Proof of Proposition \ref{prop:v-L-inf-L-2}}
First we check the integrability of each terms.
\begin{lemma}[Integrability]
\label{lem:integrability}
Given conditions \eqref{eqn:mean-zero-velocity} and \eqref{eqn:compensated-integrability},
we have
\begin{align}
    \notag
    \| u \| _{\Lp1 \Lq3 (Q _2)} &\le \eta, \\
    \notag
    \| \vp \omega \| _{\Lp1 \Lq1 ((-4, 0) \times \R ^3)} &\le \eta, \qquad
    \| \vp \omega \| _{\Lp2 \Lq2 ((-4, 0) \times \R ^3)} \le \eta, \\
    \notag
    \|\grad v\| _{\Lp1 \Lq1 ((-4, 0) \times \R ^3)} &\le \eta, \qquad  
    \|\grad v\| _{\Lp2 \Lq2 ((-4, 0) \times \R ^3)} \le \eta, \\
    \notag
    \| v \| _{\Lp1 \Lq3 ((-4, 0) \times \R ^3)} &\le \eta, \qquad
    \| v \| _{\Lp2 \Lq4 ((-4, 0) \times \R ^3)} \le \eta, \\
    \notag
    \| \grad w \| _{\Lp1 \Lq3 ((-4, 0) \times \R ^3)} &\le \eta, \\
    \label{eqn:integrability-w}
    \| w \| _{\Lp1 \Lq5 ((-4, 0) \times \R ^3)} &\le \eta, \\
    \label{eqn:integrability-varpi}
    \| \varpi \| _{\Lp1 \Lq3 ((-4, 0) \times \R ^3)} &\le \eta, \qquad
    \| \varpi \| _{\Lp2 \Lq4 ((-4, 0) \times \R ^3)} \le \eta.
\end{align}
\end{lemma}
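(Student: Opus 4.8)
The plan is to read off every one of these bounds from the fixed-time spatial estimates already proved in Lemma~\ref{lem:integrability-1}, combined with three elementary ingredients, and then integrate in time. The ingredients are: (i) the Gagliardo--Nirenberg--Sobolev inequality $\|f\|_{L^{r^*}(\R^3)} \lesssim \|\grad f\|_{L^r(\R^3)}$ with $\tfrac1{r^*} = \tfrac1r - \tfrac13$ for $1 \le r < 3$, together with its Morrey endpoint $\|f\|_{L^\infty} \lesssim \|\grad f\|_{L^r}$ for compactly supported $f$ and $r > 3$, which is exactly what generates the gains $q_1 \to q_3 \to q_5$ and $q_2 \to q_4$; (ii) the weighted Poincar\'e--Sobolev inequality $\|u(t)\|_{L^{q_3}(B_2)} \lesssim \|\grad u(t)\|_{L^{q_1}(B_2)}$, which is where the normalization~\eqref{eqn:mean-zero-velocity} enters --- one subtracts the average of $u(t)$ over $B_2$, applies the ordinary Sobolev--Poincar\'e inequality, and bounds that average by pairing the difference with $\phi$ and using $\int\phi=1$; and (iii) the pointwise bound $|\omega| \lesssim |\grad u|$, so that~\eqref{eqn:compensated-integrability} yields $\|\vp\omega\|_{\pq1} \lesssim \|\grad u\|_{\pq1(Q_2)} \le \eta_1$ and $\|\vp\omega\|_{\pq2} \le \|\omega\|_{\pq2(Q_2)} \le \eta_1$. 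Every estimate below is proved first at a fixed time and then integrated against the $\Lp1$ or $\Lp2$ norm, so the temporal exponents $p_1,p_2$ play no role beyond bookkeeping, and the final constant is a universal multiple of $\eta_1$, which we rename $\eta$.

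I would carry out the steps in this order. First, $\|u\|_{\Lp1\Lq3(Q_2)} \lesssim \|\grad u\|_{\pq1(Q_2)} \le \eta_1$ by~(ii). Second, the two bounds on $\vp\omega$ by~(iii). Third, the gradient bounds: Lemma~\ref{lem:integrability-1} gives $\|\grad v\|_{L^{q_i}}, \|\grad\varpi\|_{L^{q_i}} \lesssim \|\omega\|_{L^{q_i}(B_2)}$ at fixed $t$, hence $\|\grad v\|_{\pq i}, \|\grad\varpi\|_{\pq i} \lesssim \eta_1$ for $i=1,2$ (using~(iii) when $i=1$), while $\|\grad w\|_{L^{q_3}} \lesssim \|\omega\|_{L^1(B_2)} + \|u\|_{L^{q_3}(B_2)} \lesssim \|\grad u\|_{L^{q_1}(B_2)}$ --- here $\|\omega\|_{L^1(B_2)} \lesssim \|\omega\|_{L^{q_1}(B_2)}$ since $q_1$ is finite and $\|u\|_{L^{q_3}(B_2)}$ is handled by~(ii) --- so $\|\grad w\|_{\Lp1\Lq3} \lesssim \eta_1$. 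Fourth, the function-level bounds: the fields $v,w,\varpi$ are all compactly supported in $B_2$, so the global embedding~(i) applies to their gradients, giving at fixed $t$ the inequalities $\|v\|_{L^{q_3}} \lesssim \|\grad v\|_{L^{q_1}}$, $\|v\|_{L^{q_4}} \lesssim \|\grad v\|_{L^{q_2}}$, $\|\varpi\|_{L^{q_3}} \lesssim \|\grad\varpi\|_{L^{q_1}}$, $\|\varpi\|_{L^{q_4}} \lesssim \|\grad\varpi\|_{L^{q_2}}$ and $\|w\|_{L^{q_5}} \lesssim \|\grad w\|_{L^{q_3}}$; integrating in $t$ and inserting the bounds from the third step closes all the remaining estimates.

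The only genuinely delicate point is $\|w\|_{\Lp1\Lq5}$, because of how $q_5$ is defined: when $q_3 \ge 3$ (equivalently $q_1 \ge \tfrac32$) one has $q_5 = \infty$, and then $\|w\|_{L^\infty} \lesssim \|\grad w\|_{L^{q_3}}$ must be read as the Morrey embedding $\dot W^{1,q_3} \hookrightarrow C^{0,\,1-3/q_3} \hookrightarrow L^\infty$ for compactly supported functions, which is valid for $q_3 > 3$; for $q_3 < 3$ it is simply~(i) with $\tfrac1{q_5} = \tfrac1{q_3} - \tfrac13$. In the exponent range used in the rest of the paper, $q_1 \in (\tfrac{11}6,2)$, one is always in the Morrey regime, so this causes no difficulty there, and the single borderline case $q_1 = \tfrac32$ can anyway be avoided by an arbitrarily small perturbation of $q_1$. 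Beyond this bookkeeping the argument uses no new analytic input: every ``global'' operation is a Sobolev or Morrey embedding applied to a compactly supported field, and everything else is H\"older's inequality on $B_2$ together with the already-established Lemma~\ref{lem:integrability-1}; consequently I do not anticipate a real obstacle, only the care needed to match each target exponent to the correct combination of these tools.
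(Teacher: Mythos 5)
Your proposal is correct and matches the paper's (very terse) proof in both structure and ingredients: the paper simply says the $u$ bound follows from Sobolev embedding together with the mean-zero condition, the $\vp\omega$ bounds are given by hypothesis~\eqref{eqn:compensated-integrability} (implicitly using $|\omega|\lesssim|\grad u|$ for the $\Lp1\Lq1$ one), and the rest follow from Lemma~\ref{lem:integrability-1} plus Sobolev embedding. You have just spelled this out. The only point worth flagging is your observation about the Morrey borderline $q_3=3$ (i.e.\ $q_1=\tfrac32$), where $q_5=\infty$ and the embedding $\dot W^{1,q_3}\hookrightarrow L^\infty$ fails; this is a genuine, if minor, imprecision in the statement. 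In the paper's eventual application (the proof of the Local Theorem) one only uses $\|w\|_{L^{p_1}_t L^1_x}$ and harmonicity of $w$, so the $q_5$ exponent is never needed at full strength, which is why the issue causes no downstream harm — but your caution here is well placed.
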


\begin{proof}
Integrability of $u$ is obtained by Sobolev embedding and that $\vp u$ has average $0$. Integrability of $\vp \omega$ is given. The remaining are consequences of Lemma \ref{lem:integrability-1} and Sobolev embedding.
\end{proof}

\begin{proof}[Proof of Proposition \ref{prop:v-L-inf-L-2}]
We prove Proposition \ref{prop:v-L-inf-L-2} using a Gr\"onwall argument. Multiply \eqref{energy-inequality} by an increasing smooth function $\psi _1 (t)$ with $\psi _1 (t) = 0$ for $t \le -2$, $\psi _1 (t) = 1$ for $t \ge 0$, we have
\begin{align*}
    &\frac{\d}{\d t} \left(
        \psi _1 (t) \int \vp ^4 \frac{|v| ^2}2 \d x
    \right) + \psi _1 (t) \int \vp ^4 |\grad v| ^2 \d x \\
    &\qquad = 
    \psi' _1 (t) \int \vp ^4 \frac{|v| ^2}2 \d x + \psi _1 (t) \left(
        I _\La + I _\RR + I _\BB + I _\BL + I _\BW
    \right).
\end{align*}
Integrate from $-4$ to $t < 0$ we have
\begin{align*}
    & \psi _1 (t) \int \vp ^4 \frac{|v| ^2}2 \d x
    + \int _{-2} ^t \psi _1 (s) \int \vp ^4 |\grad v| ^2 \d x \\
    & \qquad = 
    \int _{-2} ^t \psi' _1 (s) \int \vp ^4 \frac{|v| ^2}2 \d x \d t 
    + \int _{-2} ^t \psi _1 (s) \left(
        I _{\Delta,\RR,\BB,\BL,\BW}
    \right) \d t.
\end{align*}
Because \eqref{Ila}, \eqref{IR}, \eqref{IB}, \eqref{IL}, \eqref{IW}, 
and
\begin{align*}
    \| \vp ^2 v \| _{L ^2 (B _2)}, 
    \| \vp ^2 v \| _{L ^2 (B _2)} ^\frac13 \le C \left(
        1 + \int \vp ^4 |v| ^2 \d x
    \right),
\end{align*}
we can conclude that
\begin{align*}
    &\frac{\d}{\d t} \left(
        \psi _1 (t) \int \vp ^4 \frac{|v| ^2}2 \d x
    \right) + \psi _1 (t) \int \vp ^4 |\grad v| ^2 \d x \\
    &\qquad \le C \Phi (t) \left(
        1 + \psi _1 (t) \int \vp ^4 \frac{|v| ^2}2 \d x
    \right),
\end{align*}
where
\begin{align*}
    \Phi (t) &= \psi' _1 (t) \int \vp ^4 \frac{|v| ^2}2 \d x \\
    &\qquad 
        + \| v \| _{L ^2} 
        + \|u \tensor v\| _{L ^2} 
        + \| \omega \cross u \| _{L ^\frac65 (B _2)} 
    \\
    &\qquad 
        + \| v \| _{\Lq3} ^\frac23 \| \omega \| _{\Lq2 (B _2)}
        + \|\grad w \tensor v\| _{L ^2} 
    \\
    &\qquad 
        + \|\varpi \cross w\| _{L ^2} 
        + \|\varpi \cross u + \omega \cross w \| _{L ^\frac65}
    \\
    &\le
        \| v \| _{L ^2} ^2 
        + \| v \| _{L ^2} 
        + \| v \| _{\Lq4} 
            \| u \| _{\Lq3 (B _2)} 
        + \|\omega\| _{\Lq2 (B _2)} 
            \|u\| _{\Lq3 (B _2)} \\
    &\qquad
        + \| v \| _{\Lq3} ^\frac23 
            \| \omega \| _{\Lq2 (B _2)}
        + \| \grad w \| _{\Lq3} 
            \| v \| _{\Lq4}
    \\
    &\qquad 
        + \| \varpi \| _{\Lq4} 
            \| w \| _{\Lq3}
    \\
    &\qquad 
        + \|\varpi\| _{\Lq4} 
            \|u\| _{\Lq3} 
        + \|\omega\| _{\Lq2} 
            \|w\| _{\Lq3} 
    \\
    &\le \left(
        \| v \| _{\Lq3} 
        + \| v \| _{\Lq3} ^\frac12
        + \| u \| _{\Lq3 (B _2)} 
        + \| \grad w \| _{\Lq3} 
        + \|w\| _{\Lq3} 
    \right)
    \\
    &\qquad \cross 
    \left(
        \| v \| _{\Lq4} 
        + \| v \| _{\Lq4} ^\frac12
        + \|\omega\| _{\Lq2} 
        + \|\varpi\| _{\Lq4} 
    \right)
\end{align*}
Here we used interpolation for
$
    \| v \| _{L ^2} ^2 \le \| v \| _{\Lq3} \| v \| _{\Lq4}. 
$
Therefore
\begin{align*}
    \| \Phi \| _{L ^1 _t} 
    & \lesssim
    \left\|\left(
        \| v \| _{\Lq3} 
        + \| v \| _{\Lq3} ^\frac12
        + \| u \| _{\Lq3 (B _2)} 
        + \| \grad w \| _{\Lq3} 
        + \|w\| _{\Lq3} 
    \right)\right\| _{\Lp1}
    \\
    & \qquad \times
    \left\|\left(
        \| v \| _{\Lq4} 
        + \| v \| _{\Lq4} ^\frac12
        + \|\omega\| _{\Lq2} 
        + \|\varpi\| _{\Lq4} 
    \right)\right\| _{\Lp2}
    \le \eta.
\end{align*}
By a Gr\"onwall's lemma, we conclude that for every $-4 \le t \le 0$,
\begin{align*}
    1 + \psi _1 (t) \int \vp ^4 \frac{|v| ^2}{2} \d x + \int _{-4} ^t \psi _1 (t) \int \vp ^4 |\grad v| ^2 \d x \le e^{\int _{-4} ^t C \Phi (s) \d s} \le e ^{C\eta}.
\end{align*}
Therefore by taking the sup over $-1 \le t \le 0$ and $t = 0$ respectively, we conclude
\begin{align*}
    \sup _{-1 \le t \le 0} \int |v (t)| ^2 \d x \le \eta, \qquad \int _{Q _1} |\grad v| ^2 \d x \d t \le \eta.
\end{align*}

\end{proof}

\section{Local Study: Part Two, De Giorgi Iteration}

\newcommand{\ak}{\alpha _k}
\newcommand{\bk}{\beta _k}
\newcommand{\ek}{\rho _k}
\newcommand{\ekp}{\ek ^\sharp}
\newcommand{\rks}{r _k ^\sharp}
\newcommand{\rkn}{r _k ^\natural}
\newcommand{\rkf}{r _k ^\flat}
\newcommand{\Tks}{T _k ^\sharp}
\newcommand{\Tkn}{T _k ^\natural}
\newcommand{\Tkf}{T _k ^\flat}
\newcommand{\Bks}{B _k ^\sharp}
\newcommand{\Bkn}{B _k ^\natural}
\newcommand{\Bkf}{B _k ^\flat}
\newcommand{\Qks}{Q _k ^\sharp}
\newcommand{\Qkn}{Q _k ^\natural}
\newcommand{\Qkf}{Q _k ^\flat}

In this section, we derive the boundedness of $v$ in $Q _\frac12$ which is the following.
\begin{proposition}
\label{prop:v-L-inf}
Let $v$ solves \eqref{eqn-v-final}. If \eqref{eqn:v-L-infty-L-2} holds for sufficiently small $\eta$, and we have integrability bounds in Lemma \ref{lem:integrability}, then we have
\begin{align*}
    \| v \| _{L ^\infty (Q _\frac12)} = \sup _{t \in (-1, 0)} \| v (t) \| _{L ^\infty (B _\frac12)} \le 1.
\end{align*}
\end{proposition}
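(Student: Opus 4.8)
The plan is to run a De Giorgi iteration on the local energy inequality \eqref{local-inequality} satisfied by $v$, following the scheme of \cite{Vasseur2007,Vasseur2010}, but adapted to the commutator source terms $\BB$, $\BL$, $\BW$. First I would fix a decreasing sequence of truncation levels $C_k = 1 - 2^{-k}$, a decreasing sequence of radii $r_k = \frac12(1+2^{-k})$ with associated cylinders $Q_k = (-r_k^2,0)\times B_{r_k}$ (shrinking from $Q_1$ to $Q_{1/2}$), and nested cutoff functions $\psi_k$ which are $1$ on $Q_{k+1}$, supported in $Q_k$, with $|\grad\psi_k|\lesssim 2^k$ and $|\pt\psi_k|+|\La\psi_k|\lesssim 4^k$. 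Define the truncated energy
\begin{align*}
    U_k := \sup_{t} \int \psi_k^2 (|v|-C_k)_+^2\,\d x + \int \psi_k^2 |\grad (|v|-C_k)_+|^2\,\d x\,\d t.
\end{align*}
Testing \eqref{local-inequality} against $\psi_k^2$ and the truncation $(|v|-C_k)_+$ (using the convexity/Kato inequality for $|v|$, and that $v\cdot(\BB+\BL+\BW)\le |v|\,|\BB+\BL+\BW|$), the parabolic term $\div[v\RR(u\tensor v)]$ and the Laplacian term are handled exactly as in the classical argument, producing on the right-hand side terms of the form $C4^k$ times localized space-time integrals of $|v|^2$, $|u\tensor v|$, and the source terms, all restricted to the set $A_k = \{|v|>C_k\}\cap Q_k$.

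The second step is to show the nonlinear recursion $U_{k+1}\le C^k U_k^{1+\kappa}$ for some $\kappa>0$. Here I would use: (i) the parabolic Sobolev embedding $\|f\|_{L^{10/3}_{t,x}}\lesssim \|f\|_{\mathcal E}$ to upgrade $U_k$ to an $L^{10/3}$ bound on $(|v|-C_k)_+\psi_k$; (ii) the fact that on $A_{k}$ we have $(|v|-C_{k-1})_+ \ge C_k-C_{k-1}=2^{-k}$, so the measure $|A_{k+1}|$ and all the lower-power integrals over $A_{k+1}$ gain a factor that is a positive power of $U_k$ by Chebyshev; (iii) the integrability bounds from Lemma \ref{lem:integrability} — crucially $u,v\in \Lp1\Lq3$, $\grad v\in\pq22$, $\omega, w, \varpi$ in the listed spaces — together with H\"older in space-time to absorb the source terms $\BB,\BL,\BW$, whose spatial integrability was computed in Lemma \ref{lem:integrability-1} (e.g. $\|\BB\|_{L^{3/2}}\lesssim\|\omega\cross u\|_{L^1}$, $\|\BL\|_{L^2}\lesssim\|\omega\|_{L^2}$). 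The point is that each source term, when paired with $(|v|-C_k)_+\psi_k^2$ via H\"older using the $L^{10/3}$ gain and a $\theta$-power of $|A_k|$, contributes $C^k U_k^{1+\kappa}$ with the same $\kappa>0$; the quadratic velocity term $u\tensor v$ and the $\RR$-term are the standard ones from \cite{Vasseur2010}.

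The final step is the standard De Giorgi lemma: if $U_0 \le \|v\|_{\mathcal E(Q_1)}^2 \le \eta$ is small enough (smaller than a threshold $(C)^{-1/\kappa^2}$ determined by the constants above), then $U_k\to 0$, which forces $(|v|-1)_+\equiv 0$ on $Q_{1/2}$, i.e. $\|v\|_{L^\infty(Q_{1/2})}\le 1$. Proposition \ref{prop:v-L-inf-L-2} supplies exactly the smallness $U_0\le\eta$ needed, provided $\eta_1$ (hence $\eta$) is chosen small. The main obstacle I anticipate is bookkeeping the source terms: unlike the clean Navier-Stokes local energy inequality, here $\BB,\BL,\BW$ only have subcritical integrability ($L^2_tL^{3/2}_x$ at best, as noted after Lemma \ref{lem:integrability-1}), so one must be careful that the H\"older pairing with the truncated function still leaves a strictly positive power of $|A_k|$ to spare after using the $L^{10/3}$ gain — this is where the restriction $q_1,q_2<3$ and the condition $\frac1{q_1}+\frac1{q_2}\le\frac76$ in the Local Theorem hypotheses get used, ensuring $q_3,q_4$ (and $q_5$) are finite and the exponents close with room. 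A secondary technical point is justifying the truncation computation rigorously despite $|v|$ being only Lipschitz in $v$; this is handled as usual by approximating $|v|$ by $\sqrt{|v|^2+\e^2}-\e$ and passing to the limit, or by the difference-quotient argument already invoked for \eqref{eqn:LinfL1}.
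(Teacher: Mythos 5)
Your overall framework is correct: a De Giorgi iteration with truncation levels $c_k=1-2^{-k}$, shrinking cylinders, a parabolic Sobolev gain, Chebyshev on the level sets, and a nonlinear recursion $U_k\le C^kU_{k-1}^{1+\kappa}$, with Proposition~\ref{prop:v-L-inf-L-2} supplying the starting smallness. You have also correctly pinned the ``vector'' version of the truncation (the $d_k$-type derivative and the split between $\ak|\grad|v||$ and $\bk|\grad v|$) and the role of the $q_i$-constraints in closing exponents. That much matches the paper. But two of your claims gloss over what is actually the technical heart of the proof, and as written they would not go through.

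First, you assert that ``the quadratic velocity term $u\tensor v$ and the $\RR$-term are the standard ones from \cite{Vasseur2010}.'' They are not. In \cite{Vasseur2010} the pressure term is $\grad\La\inv\div\div(u\tensor u)$ --- quadratic in the \emph{same} small, truncated variable. Here the term is $\grad\RR(u\tensor v)$ with $u$ large (only mean-zero, not small) and $v$ small, and $\RR$ nonlocal, so you cannot truncate $v$ inside the operator. The paper's way around this is to use the localization decomposition $\vp u=v+w$ inside $B_1$, split $u\tensor v=(v+w+(u-v-w))\tensor v$ and, with cutoffs $\ekp$, separate an interior $\ekp v\tensor v$ piece (handled by trilinear-form algebra $\Tc=\Tg+\Td$ and repeated swapping of $\ak v$ and $\bk v$), a $w\tensor v$ piece that is pushed into a new commutator $\BW_2$, and an exterior piece that is smooth in $\supp\ek$ because the kernel of $\RR$ is smooth off the diagonal. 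Without that decomposition the recursion does not close: the exterior contribution of the nonlocal operator is not dominated by any power of $U_{k-1}$.

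Second, the proposal treats $\BB,\BL,\BW$ uniformly via the crude bound $v\cdot(\BB+\BL+\BW)\le|v||\BB+\BL+\BW|$ and H\"older. This works for $\BB$ and $\BL$, which inside $B_1$ are Newtonian potentials of compactly supported data, hence smooth there. It does \emph{not} work for $\BW$ (and certainly not for $\BW+\BW_2$), which contain $\omega\cross w$, $\varpi\cross u$, and after regrouping $\Pcurl(\curl v\cross\ekp w)$, a term with a raw $\grad v$. The paper needs a vector-calculus reshuffle of $\BW+\BW_2$ into $-\Pcurl(\ekp\curl v\cross w)-\Pcurl(\ekp\varpi\cross w)+\Pgrad(\ekp\varpi\cross v)$, and then the lone $\ekp w\cdot\grad v$ term is absorbed using the truncation identity
\begin{align*}
\int\ek\bk v\cdot(\ekp w\cdot\grad)v\,\d x=-\int\frac{v_k^2}{2}\div(\ek w)\,\d x,
\end{align*}
which uses that $\grad|v|=\grad v_k$ on $\Omega_k$. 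A direct H\"older pairing of $\bk v$ with the untreated $\curl v\cross w$ would leave $\grad v$ at the wrong power and the recursion would degenerate. So the main obstacle you anticipate (``bookkeeping the source terms'') is real, but the resolution requires the $v/w$ decomposition and the trilinear/commutator algebra, not just more careful H\"older.
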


The proof uses De Giorgi technique and the truncation method. First, we set dyadically shrinking radius,
\begin{align*}
    \rkf &= \frac12(1 + 8 ^{-k}),  & 
    \rkn &= \frac12(1 + 2 \times 8 ^{-k}), &
    \rks &= \frac12(1 + 4 \times 8 ^{-k}).
\end{align*}
Then we define dyadically shrinking cylinder $Q _k$'s,
\begin{align*}
    \Tkf &= {\rkf} ^2,  &
    \Bkf &= B _{\rkf} (0), &
    \Qkf &= (-\Tkf, 0) \times \Bkf, \\
    \Tkn &= {\rkn} ^2,  &
    \Bkn &= B _{\rkn} (0), &
    \Qkn &= (-\Tkn, 0) \times \Bkn, \\
    \Tks &= {\rks} ^2,  &
    \Bks &= B _{\rks} (0), &
    \Qks &= (-\Tks, 0) \times \Bks.
\end{align*}
We also introduce positive smooth space-time cut-off functions 
$\ek$ and $\ekp$ with 
\begin{align*}
    \ind{\Qkf} \le \ek \le \ind{\Qkn}, \qquad \ind{\Qks} \le \ekp \le \ind{Q ^\flat _{k - 1}}.
\end{align*}
Then, let $c _k$ denote a sequence of rising energy level, 
\begin{align*}
    c _k &= 1 - 2 ^{-k}, & 
    v _k &= (|v| - c _k) _+, & 
    \bk &= \frac{v _k}{|v|}, \\
    \Omega _k &= \{ v _k > 0 \}, &
    \ind k &= \ind{\Omega _k}, &
    \ak &= 1 - \bk.
\end{align*}
We define analogous of vector derivative $d _k$ and energy quantity $U _k$:
\begin{align*}
    d _k ^2 &= \ind k \left(
        \ak |\grad |v||^2 + \bk |\grad v| ^2
    \right), \\
    U _k &= \| v _k \| _{L ^\infty (-\Tkf, 0; L ^2 (\Bkf))} ^2 + \| d _k \| _{L ^2 (\Qkf)} ^2.
\end{align*}

We have the following truncation estimates.
\begin{lemma}
    \begin{align*}
        \ak v \le c _k &\le 1, \\
        \|\bk v\| _{\LLLH (Q _{k-1}^\flat)} ^2 &\le 9 U _{k - 1}, \\
        \|\ind k \| _{L ^\infty _t L ^2 _x \cap L ^2 _t L ^6 _x (Q _{k-1} ^\flat)} ^2 &\le C ^k U _{k - 1}.
    \end{align*}
\end{lemma}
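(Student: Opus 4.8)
The goal is to establish the three truncation estimates of the final Lemma, which compare the energy of $v_k=(|v|-c_k)_+$ with that of the previous level. The plan is to prove them in the order stated, since each builds on the previous one.

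\medskip

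\textbf{First estimate: $\ak v \le c_k \le 1$.} Since $c_k = 1 - 2^{-k} \le 1$, the second inequality is immediate. For the first, I would recall $\ak = 1 - \bk = 1 - \frac{v_k}{|v|} = 1 - \frac{(|v|-c_k)_+}{|v|}$. On the set $\Omega_k = \{|v| > c_k\}$ we have $\ak v = |v| - (|v|-c_k) = c_k$ pointwise in magnitude; outside $\Omega_k$ we have $v_k = 0$ so $\ak = 1$ and $\ak|v| = |v| \le c_k$. Hence $|\ak v| \le c_k$ everywhere, which is the claim (interpreted componentwise after writing $\ak$ as a scalar multiplying the vector $v$). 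This is a short pointwise computation.

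\medskip

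\textbf{Second estimate: $\|\bk v\|_{\mathcal E(Q^\flat_{k-1})}^2 \le 9 U_{k-1}$.} Here $\bk v = \frac{(|v|-c_k)_+}{|v|} v$, which has magnitude $(|v|-c_k)_+ = v_k$. Note $v_k \le v_{k-1}$ pointwise since $c_k \ge c_{k-1}$, so the $L^\infty_t L^2_x$ part is controlled by $\|v_{k-1}\|_{L^\infty_t L^2_x(Q^\flat_{k-1})}^2 \le U_{k-1}$. For the $\dot H^1$ part I would compute $\grad(\bk v)$ using the product rule and the identity $\grad v_k = \ind k \grad |v|$ (valid a.e., since $v_k$ vanishes off $\Omega_k$). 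Expanding $\grad(\bk v)$ will produce terms involving $\grad|v|$ and $\grad v$, weighted by $\ind k$, $\ak$, $\bk$; using $0\le \ak,\bk\le 1$, $\ak+\bk=1$, and the pointwise inequality $|\grad|v||\le|\grad v|$, one bounds $|\grad(\bk v)|^2 \lesssim \ind k(|\grad|v||^2 + |\grad v|^2)$, and the numerology should yield the constant $9$ by comparison with $d_{k-1}^2 = \ind{k-1}(\ak[k-1]|\grad|v||^2 + \bk[k-1]|\grad v|^2)$ after noting $\Omega_k \subset \Omega_{k-1}$ and that on $\Omega_k$ the coefficient $\bk[k-1]$ is bounded below. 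I expect the bookkeeping of which truncation index sits on which coefficient to be the fiddly point; the factor $9 = 3^2$ presumably comes from bounding $1 \le 3\,\bk[k-1]$ on $\Omega_k$ or a similar elementary inequality relating consecutive levels.

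\medskip

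\textbf{Third estimate: $\|\ind k\|_{L^\infty_t L^2_x \cap L^2_t L^6_x(Q^\flat_{k-1})}^2 \le C^k U_{k-1}$.} This is the standard De Giorgi ``measure of the level set'' bound. The key observation is that on $\Omega_k$, we have $v_{k-1} = (|v|-c_{k-1})_+ \ge c_k - c_{k-1} = 2^{-k}$, so $\ind k \le 2^k v_{k-1}$ pointwise (indeed $\ind{k} \le (2^k v_{k-1})^\gamma$ for any $\gamma > 0$ on the relevant set). Therefore $\|\ind k\|_{L^\infty_t L^2_x}^2 \le 4^k \|v_{k-1}\|_{L^\infty_t L^2_x}^2 \le 4^k U_{k-1}$, and similarly $\|\ind k\|_{L^2_t L^6_x}^2 \le 4^k \|v_{k-1}\|_{L^2_t L^6_x}^2$; the latter is controlled via the Sobolev embedding $\dot H^1 \hookrightarrow L^6$ applied to (a truncation of) $v_{k-1}$ together with Hölder in time, giving $\lesssim 4^k(\|v_{k-1}\|_{L^\infty_t L^2_x}^2 + \|\grad v_{k-1}\|_{L^2_{t,x}}^2) \lesssim 4^k U_{k-1}$ up to absorbing cutoff derivatives, which cost an extra geometric factor $8^{2k}$ from $\grad\ek$ — this is absorbed into $C^k$.

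\medskip

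The main obstacle I anticipate is the second estimate: handling the non-smoothness of $|v|$ at its zero set and correctly tracking the coefficients $\ak,\bk$ across consecutive truncation levels so that the constant comes out as the clean value $9$. The first and third estimates are routine once the pointwise inequalities $\ind k \le 2^k v_{k-1}$ and $v_k \le v_{k-1}$ are in hand.
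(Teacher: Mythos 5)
Your first estimate is fine, and your third is essentially the paper's argument; just note that the final step, passing from $\|\grad v_{k-1}\|_{L^2}^2$ to $U_{k-1}$, silently uses the pointwise bound $|\grad v_{k-1}| \le d_{k-1}$ (on $\Omega_{k-1}$ one has $\grad v_{k-1} = \grad|v|$, while $d_{k-1}^2 \ge (\alpha_{k-1}+\beta_{k-1})|\grad|v||^2 = |\grad|v||^2$), which is worth stating. The genuine gap is in the second estimate: your proposed mechanism for the factor $9$ does not work.

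The problem is that you discard the $\alpha_k,\beta_k$ weights too early. The product rule on $\Omega_k$, using $\grad v_k = \grad|v|$ and $v^0 \cdot \partial_i v = \partial_i|v|$, gives
\begin{align*}
\grad(\beta_k v) = \alpha_k\,(\grad|v|)\otimes v^0 + \beta_k\,\grad v, \qquad v^0 := v/|v|,
\end{align*}
whence $|\grad(\beta_k v)|^2 = \alpha_k(1+\beta_k)|\grad|v||^2 + \beta_k^2|\grad v|^2 \le 2(\alpha_k|\grad|v||^2 + \beta_k|\grad v|^2) = 2\,d_k^2$. That is, the natural pointwise bound keeps the weighted combination $d_k$, which the paper records (via Lemma~4 of \cite{Vasseur2007}) as $|\grad(\beta_k v)| \le 3\,d_k$. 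The second, and decisive, ingredient is the monotonicity $d_k \le d_{k-1}$: rewrite $d_k^2 = \ind{\Omega_k}\bigl(|\grad|v||^2 + \beta_k(|\grad v|^2 - |\grad|v||^2)\bigr)$ and use $|\grad|v|| \le |\grad v|$, $\Omega_k \subset \Omega_{k-1}$, and $\beta_k \le \beta_{k-1}$. Squaring then yields $\|\grad(\beta_k v)\|_{L^2}^2 \le 9\|d_{k-1}\|_{L^2}^2$, which together with $v_k \le v_{k-1}$ gives $9U_{k-1}$.

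By contrast, your bound $|\grad(\beta_k v)|^2 \lesssim \ind{\Omega_k}(|\grad|v||^2 + |\grad v|^2)$ drops the weights, and the comparison you then propose — ``$1 \le 3\beta_{k-1}$ on $\Omega_k$'' — is simply false for large $k$: when $|v|$ is just above $c_k = 1 - 2^{-k}$ one has $\beta_{k-1} = 1 - c_{k-1}/|v| \approx 2^{-k}/(1-2^{-k}) \to 0$. So your comparison constant would grow like $2^k$ rather than staying bounded. Retaining the $\alpha_k,\beta_k$ structure through the product rule is precisely what makes the $k$-uniform constant $9$ possible, and $d_k \le d_{k-1}$ is the link between consecutive levels that you were missing.
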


\begin{proof}
    The first estimate follows from the definition. By Lemma 4 in \cite{Vasseur2007}, we have $|\grad v _k| \le d _k$ and $|\grad (\bk v)| \le 3 d _k$. Moreover, since $|\grad|v|| \le |\grad v| ^2$, we see $d _k \le d _{k - 1}$, as $v _k$ and $\bk$ are monotonously decreasing. So
    \begin{align*}
        \| \grad (\bk v) \| _{L ^2 (Q _{k - 1} ^\flat)} \le 
        3\| d _k \| _{L ^2 (Q _{k - 1} ^\flat)} \le 
        3\| d _{k - 1} \| _{L ^2 (Q _{k - 1} ^\flat)}.
    \end{align*}
    Moreover, the truncation gives $|\bk v| + 2 ^{-k} \ind k = v _k + 2 ^{-k} \ind k = \ind k v _{k - 1}$, so
    \begin{align*}
        \| \bk v \| _{L ^\infty _t L ^2 _x (Q _{k - 1} ^\flat)} 
        &\le \|v _{k - 1}\| _{L ^\infty _t L ^2 _x (Q _{k - 1} ^\flat)}, \\
        2 ^{-k} \| \ind k \| _{L ^\infty _t L ^2 _x (Q _{k - 1} ^\flat)} &\le \|v _{k - 1}\| _{L ^\infty _t L ^2 _x (Q _{k - 1} ^\flat)} , \\
        2 ^{-k} \| \ind k \| _{L ^2 _t L ^6 _x (Q _{k - 1} ^\flat)} 
        &\le \| v _{k - 1} \| _{L ^2 _t L ^6 _x (Q _{k - 1} ^\flat)} \\
        &\le \| v _{k - 1} \| _{L ^\infty _t L ^2 _x (Q _{k - 1} ^\flat)} 
        + \| \grad v _{k - 1} \| _{L ^2 (Q ^\flat _{k - 1})} \\
        &\le \| v _{k - 1} \| _{L ^\infty _t L ^2 _x (Q _{k - 1} ^\flat)} 
        + \| d _{k - 1} \| _{L ^2 (Q ^\flat _{k - 1})}.
    \end{align*}
\end{proof}

\begin{corollary}[Nonlinearization]
\label{cor:nonlinearize}
If $f \in L ^p _t L ^q _x (Q _{k - 1})$, with
\begin{align*}
    \frac1p + \gamma \left(
        \frac\theta2 + \frac{1-\theta}\infty
    \right) = 1, \qquad
    \frac1q + \gamma \left(
        \frac\theta6 + \frac{1-\theta}2
    \right) = 1,
\end{align*}
for some $0 \le \theta \le 1$, $0 < \sigma \le \gamma$, then uniformly in $\sigma$,
\begin{align*}
    \int _{Q _{k - 1} ^\flat} |\bk v| ^\sigma |f| \d x \d t \le C ^k \| f \| _{L ^p _t L ^q _x (Q _{k - 1})} U _{k - 1} ^\frac\gamma2.
\end{align*}
\end{corollary}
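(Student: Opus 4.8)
\textbf{Proof proposal for Corollary \ref{cor:nonlinearize}.} The plan is to interpolate the quantity $|\bk v|$ between the two norms controlled by $U _{k-1}$, namely $L ^\infty _t L ^2 _x$ and $L ^2 _t L ^6 _x$ (via $L ^2 _t \dot H ^1 _x$), using the estimate $\|\bk v\| _{\LLLH (Q _{k-1}^\flat)} ^2 \le 9 U _{k-1}$ from the previous lemma. First I would observe that the two linear constraints on $(p,q)$ say exactly that the pair $(\sigma p', \sigma q')$, where $p', q'$ are the H\"older conjugates, matches the parabolic interpolation exponent $L ^{\bar p} _t L ^{\bar q} _x$ obtained from interpolating $L ^\infty _t L ^2 _x$ with $L ^2 _t L ^6 _x$ at parameter $\theta$: concretely $\frac{1}{\bar p} = \gamma(\frac\theta2)$ and $\frac{1}{\bar q} = \gamma(\frac\theta6 + \frac{1-\theta}2)$, scaled so that $\bar p \ge \sigma$, $\bar q \ge \sigma$ with the right balance. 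So the strategy is: apply H\"older in space-time to split $\int |\bk v| ^\sigma |f|$ as $\| |\bk v| ^\sigma \| _{L ^{\bar p/\sigma} _t L ^{\bar q/\sigma} _x} \| f \| _{L ^p _t L ^q _x}$ after checking the exponents are conjugate; then rewrite $\| |\bk v| ^\sigma \| _{L ^{\bar p/\sigma} _t L ^{\bar q/\sigma} _x} = \| \bk v \| _{L ^{\bar p} _t L ^{\bar q} _x} ^\sigma$.

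Next, I would bound $\| \bk v \| _{L ^{\bar p} _t L ^{\bar q} _x (Q _{k-1}^\flat)}$ by the parabolic Gagliardo--Nirenberg / interpolation inequality: for the admissible exponent pair parametrized by $\theta$, one has $\| \bk v \| _{L ^{\bar p} _t L ^{\bar q} _x} \lesssim \| \bk v \| _{L ^\infty _t L ^2 _x} ^{1-\theta} \| \bk v \| _{L ^2 _t L ^6 _x} ^{\theta} \lesssim \| \bk v \| _{\LLLH} ^{1}$ (the powers summing to one because the total interpolation weight is normalized), and hence $\| \bk v \| _{L ^{\bar p} _t L ^{\bar q} _x} \le (9 U _{k-1}) ^{1/2}$ up to a constant. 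Raising to the power $\sigma$ and matching $\sigma$ against $\gamma$ through the constraints gives $\| \bk v \| _{L ^{\bar p} _t L ^{\bar q} _x} ^\sigma \lesssim U _{k-1} ^{\gamma/2}$; here the point is that the relation between $\sigma$, $\gamma$ and the exponent pair is exactly engineered so the power of $U _{k-1}$ comes out to $\gamma/2$ regardless of the particular $\sigma \le \gamma$, which is what gives the uniformity in $\sigma$. The $C ^k$ factor enters only from the mismatch between $Q _{k-1}^\flat$ and $Q _{k-1}$ in the cutoff geometry and from the dyadic constants in the Sobolev embedding on shrinking balls; since the radii $\rkf$ stay bounded between $\frac12$ and $1$, these constants are in fact uniformly bounded, but keeping $C ^k$ is harmless and consistent with how the lemma is stated.

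The main obstacle I anticipate is bookkeeping the exponents correctly so that the two displayed constraints on $(p,q)$ are shown to be equivalent to ``$(\bar p/\sigma)' = p$, $(\bar q/\sigma)' = q$ where $\bar p, \bar q$ are the $\theta$-interpolation exponents scaled by $\gamma/\sigma$''; this is a short but error-prone algebra exercise with reciprocals, and one must be careful about the case $\sigma < \gamma$ versus $\sigma = \gamma$ (in the former, there is spare integrability, and one uses that $Q _{k-1}^\flat$ has finite measure to absorb the extra exponent, again producing only a bounded constant). A secondary point to verify is that the interpolation inequality is applied to $\bk v$ rather than $v$ itself; this is legitimate because $\bk v$ vanishes outside $\Omega _k$ and the previous lemma already provides the needed $\LLLH$ control of $\bk v$ on $Q _{k-1}^\flat$, so no additional truncation argument is required. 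Once the exponent identities are pinned down, the rest is a direct chain of H\"older and interpolation with no analytic subtlety.
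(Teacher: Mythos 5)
Your overall framework (parabolic interpolation of $\bk v$ in $L^{p_\theta}_t L^{q_\theta}_x$ followed by H\"older against $f$) matches the paper's, but there is a real gap in the way you handle $\sigma<\gamma$, and it is precisely the case that matters. After H\"older, you get $\|\bk v\|_{L^{\bar p}_t L^{\bar q}_x}^\sigma$; raising a quantity bounded by $U_{k-1}^{1/2}$ to the power $\sigma$ gives $U_{k-1}^{\sigma/2}$, not $U_{k-1}^{\gamma/2}$ as you assert. These are genuinely different: the De Giorgi iteration needs the superlinear power $\gamma/2>1$, and $U_{k-1}^{\sigma/2}$ with small $\sigma$ is a \emph{weaker} bound (assuming $U_{k-1}\le 1$, $U_{k-1}^{\sigma/2}\ge U_{k-1}^{\gamma/2}$). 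Your fallback --- absorbing the ``extra'' integrability using the finite measure of $Q_{k-1}^\flat$ --- only supplies a constant, never the missing factor $U_{k-1}^{(\gamma-\sigma)/2}$. So the claimed uniformity in $\sigma$ is not established.

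The device you are missing is the indicator function. Since $\bk v$ vanishes outside $\Omega_k$, one is free to insert $\ind{k}^{\gamma-\sigma}=1$ on its support and then H\"older with three factors: $|\bk v|^\sigma$ in $L^{p_\theta/\sigma}_tL^{q_\theta/\sigma}_x$, $\ind{k}^{\gamma-\sigma}$ in $L^{p_\theta/(\gamma-\sigma)}_tL^{q_\theta/(\gamma-\sigma)}_x$, and $f$ in $L^p_tL^q_x$; the two displayed exponent constraints are exactly the conjugacy condition $\gamma/p_\theta+1/p=1$ and $\gamma/q_\theta+1/q=1$. The point is that the preceding lemma gives not only $\|\bk v\|_{\LLLH}\lesssim U_{k-1}^{1/2}$ but also $\|\ind{k}\|_{\LLLH}\le C^k U_{k-1}^{1/2}$, so by interpolation $\|\ind{k}\|_{L^{p_\theta}_tL^{q_\theta}_x}\le C^k U_{k-1}^{1/2}$ too, and the indicator factor contributes exactly the missing $U_{k-1}^{(\gamma-\sigma)/2}$. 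This also corrects your explanation of the $C^k$: it does not come from shrinking radii (which, as you note, stay bounded) but from the bound $2^{-k}\ind{k}\le v_{k-1}$ used to control $\ind{k}$ in the energy space. Once the indicator is put in, the rest of your argument goes through as you describe.
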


\newcommand{\Lpt}{L ^{p _\theta} _t}
\newcommand{\Lqt}{L ^{q _\theta} _x}

\begin{proof}
By interpolation, 
\begin{align*}
    \| \bk v \|, \| \ind k \| _{\Lpt \Lqt (Q _{k - 1})} \le U _{k - 1} ^\frac12,
\end{align*}
where
\begin{align*}
    \frac1{p _\theta} = \frac\theta2 + \frac{1-\theta}\infty
    , \qquad
    \frac1{q _\theta} = 
        \frac\theta6 + \frac{1-\theta}2.
\end{align*}
Therefore, using H\"older's inequality,
\begin{align*}
    \int _{Q _{k - 1}} |\bk v| ^\sigma |f| \d x \d t 
    \le 
    \| f \| _{L ^p _t L ^q _x} 
    \| \bk v \| _{\Lpt \Lqt} ^\sigma 
    \| \ind k \| _{\Lpt \Lqt} ^{\gamma - \sigma} 
    \le
    \| f \| _{L ^p _t L ^q _x} U _{k - 1} ^\frac\gamma2.
\end{align*}
\end{proof}

First, we recall the following identities from \cite{Vasseur2007}.
\begin{align}
    \label{ak-algebra}
    \ak v \cdot \partial _\bullet v &= \partial _{\bullet} \left(\frac{|v| ^2 - v _k ^2}{2}\right), \\
    \label{ak-3}
    \ak v \cdot \La v &= \La \left(
        \frac{|v|^2 - v _k ^2}2
    \right) + d _k ^2 - |\grad v| ^2.
\end{align}
Since $\ak v$ is bounded, we can multiply equation \eqref{eqn-v-final} by $\ak v$ and obtain
\begin{align}
    \label{ak-2}
    &\pt \left(
        \frac{|v|^2 - v _k ^2}2
    \right) 
    + \ak v \cdot \grad \RR (u \tensor v)
    \\
    \notag
    &\qquad
    = \La \left(
        \frac{|v|^2 - v _k ^2}2
    \right) + d _k ^2 - |\grad v| ^2
    + \ak v \cdot (\BB + \BL + \BW).
\end{align}
using \eqref{ak-algebra} and \eqref{ak-3}. Denote $\Cv = \BB + \BL + \BW$. Subtracting \eqref{ak-2} from \eqref{local-inequality}, we have
\begin{align*}
    \pt \frac{v _k ^2}{2} + d _k ^2 
    + \div (v \RR (u \tensor v)) - \ak v \cdot \grad \RR (u \tensor v)
    \le \La \frac{v _k ^2}{2} 
    + \bk v \cdot \Cv.
\end{align*}
Multiply by $\ek$, then integrate in space and from $\sigma$ to $\tau$ in time,
\begin{align*}
    &\left[
        \int \ek \frac{v _k ^2}{2} \d x
    \right] ^\tau _\sigma 
    + \int _\sigma ^\tau \int \ek d _k ^2 \d x \d t \\
    &\qquad \le \int _\sigma ^\tau \int (\pt \ek + \La \ek) \frac{v _k ^2}{2} \d x \d t 
    - \int _\sigma ^\tau \int \ek \div (v \RR (u \tensor v)) \d x \d t \\
    &\qquad \qquad + \int _\sigma ^\tau \int \ek \ak v \cdot \grad \RR (u \tensor v) \d x \d t 
    + \int _\sigma ^\tau \int \ek \bk v \cdot \Cv \d x \d t.
\end{align*}
Take the sup over $\tau > -\Tkf$, and set $\sigma < -T _{k - 1} ^\flat$, we obtain
\begin{align}
    \label{nonlinearize}
    U _k &\le
    \sup _{\tau \in (-\Tkf, 0)} \int \ek \frac{v _k ^2}{2} \d x + \int _{-T _{k - 1} ^\flat} ^0 \int \ek d _k ^2 \d x \d t \\
    \notag
    & \le C ^k \int _{\Qkn} v _k ^2 \d x \d t 
    + \sup _{\tau \in (-\Tkf, 0)} 
    \bigg\lbrace
    \int _{-\Tkn} ^\tau \int 
    \ek \ak v \cdot \grad \RR (u \tensor v) 
    \d x \d t \\
    \notag
    & \qquad \qquad \qquad \qquad \qquad \qquad \qquad -
    \int _{-\Tkn} ^\tau \int 
    \ek \div (v \RR (u \tensor v))
    \d x \d t \\
    \notag
    & \qquad \qquad \qquad \qquad \qquad \qquad \qquad +
    \int _{-\Tkn} ^\tau \int 
    \ek \bk v \cdot \Cv \d x \d t \bigg\rbrace.
\end{align}
Using Corollary \ref{cor:nonlinearize}, the first one is bounded by 
\begin{align}
    \label{eqn:vk2-term}
    \int _{\Qkn} v _k ^2 \d x \d s 
    \le 
    \int _{Q _{k - 1} ^\flat} |\bk v|^2 \d x \d s
    \le U _{k - 1} ^\frac53.
\end{align}
Now let's deal with the last few terms. For simplicity, we use $\iint \d x \d t$ to denote $\int _{-\Tkn} ^\tau \int _{\R ^3} \d x \d t$ in the rest of this section.

\subsection{Highest Order Nonlinear Term}
Define three trilinear forms,
\newcommand{\Tc}{\mathbf T _\circ}
\newcommand{\Tg}{\mathbf T _\grad}
\newcommand{\Td}{\mathbf T _{\div}}
\begin{align*}
    \Tc [v _1, v _2, v _3] &= \iint \ek \div (v _1 \RR (v _2 \tensor v _3)) \d x \d t, \\
    \Tg [v _1, v _2, v _3] &= \iint \ek v _1 \cdot \grad \RR (v _2 \tensor v _3) \d x \d t, \\
    \Td [v _1, v _2, v _3] &= \iint \ek \div v _1 \RR (v _2 \tensor v _3) \d x \d t.
\end{align*}
They are symmetric on $v _2$, $v _3$ positions. When we have enough integrability, that is, when
\begin{align*}
    |\grad v _1| |v _2| |v _3|, |v _1| |\grad v _2| |v _3|, |v _1| |v _2| |\grad v _3| \in L ^1 _{t, x},
\end{align*}
we have Leibniz rule
\begin{align*}
    \Tc = \Tg + \Td.
\end{align*}
The goal is to estimate the first two double integrals in \eqref{nonlinearize},
\begin{align*}
    &\iint 
    \ek \ak v \cdot \grad \RR (u \tensor v) \d x \d t - \iint \ek \div (v \RR (u \tensor v)) \d x \d t \\
    &\qquad
    = \Tg [\ak v, u, v] - \Tc [v, u, v].
\end{align*}

We first separate $w \tensor v$ from $u \tensor v$, and we will have
\begin{align*}
    \Tg [\ak v, w, v] - \Tc [v, w, v] &= \Tg [\ak v, w, v] - \Tg [v, w, v] - \Td [v, w, v]\\
    &= - \Tg [\bk v, w, v]
    \\
    &= 
    -\iint \ek \bk v \cdot \grad \RR (w \tensor v) \d x \d t.
\end{align*}
Denote $-\grad \RR (w \tensor v) =: \BW _2$ and we will deal with it later. The remaining $(u - w) \tensor v$ can be separated into interior part and exterior part,
\begin{align*}
    (u - w) \tensor v = \ekp v \tensor v + (1 - \ekp) (u - w) \tensor v.
\end{align*}
The exterior part is bounded and smooth in space over the support of $\ek$. 
\begin{align*}
    \| \ek \RR ((1 - \ekp) (u - w) \tensor v) \| _{\Lp3 C ^\infty _x} 
    &\le C \| (u - w) \tensor v \| _{\Lp3 L ^2 _x} 
    \\
    &\le C \| u - w \| _{\Lp1 \Lq3 (Q _2)} \| v \| _{\Lp2 \Lq4}
    \le \eta.
\end{align*}
Here, we denote
\begin{align*}
    \frac1{p _3} = \frac1{p _1} + \frac1{p _2} < 1.
\end{align*}

Therefore we can use Leibniz rule similar as $w$ and
\begin{align*}
    &\Tg [\ak v, (1 - \ekp) (u - w), v] - \Tc [v, (1 - \ekp) (u - w), v] \\
    &= \Tg [\ak v, (1 - \ekp) (u - w), v] - \Tg [v, (1 - \ekp) (u - w), v] \\
    &= -\Tg [\bk v, (1 - \ekp) (u - w), v] \\
    &= -\iint \ek \bk v \cdot \grad \RR (v \tensor (1 - \ekp) (u - w)) \d x \d t \\
    &
    \le C ^k U _{k - 1} ^{\frac53 - \frac2{3 p _3}}
\end{align*}
by nonlinearization Corollary \ref{cor:nonlinearize}.
The interior part is
\begin{align*}
    &
    \Tg [\ak v, \ekp v, v] 
    - \Tc [v, \ekp v, v] 
    \\
    &= 
    \Tg [\ak v, \ekp \bk v, \bk v] 
    + 2\Tg [\ak v, \ekp \ak v, \bk v] 
    \\
    &\qquad 
    + \Tg [\ak v, \ekp \ak v, \ak v] 
    - \Tc [v, \ekp v, v] 
    \\
    &= 
    \Tg [\ak v, \ekp \bk v, \bk v] 
    \\
    &\qquad 
    + 2\Tc [\ak v, \ekp \ak v, \bk v] 
    - 2\Td [\ak v, \ekp \ak v, \bk v] 
    \\
    &\qquad 
    + \Tc [\ak v, \ekp \ak v, \ak v] 
    - \Td [\ak v, \ekp \ak v, \ak v] 
    \\
    &\qquad 
    - \Tc [v, \ekp v, v] 
    \\
    &= 
    \Tg [\ak v, \ekp \bk v, \bk v] 
    \\
    &\qquad 
    + 2\Td [\bk v, \ekp \ak v, \bk v] 
    + \Td [\bk v, \ekp \ak v, \ak v] 
    \\
    &\qquad 
    + 2\Tc [\ak v, \ekp \ak v, \bk v] 
    + \Tc [\ak v, \ekp \ak v, \ak v] 
    \\
    &\qquad 
    - \Tc [v, \ekp v, v] 
    \\
    &= 
    \Tg [\ak v, \ekp \bk v, \bk v] 
    + \Td [\bk v, \ekp \ak v, (\bk + 1) v] 
    \\
    &\qquad 
    - \Tc [\ak v, \ekp \bk v, \bk v] 
    - \Tc [\bk v, \ekp v, v].
\end{align*}
Notice that the boundedness of $\ak v$ guarantees enough integrability to switch between trilinear forms. Then
\begin{align*}
    &|\Tg [\ak v, \ekp \bk v, \bk v]|, 
    |\Td [\bk v, \ekp \ak v, (\bk + 1) v]| \\
    &\qquad \lesssim \| \grad (\bk v) \| _{L ^2 (Q _{k - 1})} U _{k - 1} ^\frac56 \le U _{k - 1} ^\frac43, \\
    & |\Tc [\ak v, \ekp \bk v, \bk v]|,
    |\Tc [\bk v, \ekp v, v]| 
    \lesssim U _{k - 1} ^\frac53.
\end{align*}
In conclusion, 
\begin{align}
    \label{eqn:trilinear}
    &\bigg|
        \iint \ek \ak v \cdot \grad \RR (u \tensor v) \d x \d t 
        - \iint \ek \div (v \RR (u \tensor v)) \d x \d t
        \\
    \notag
        &\qquad
        - \iint \ek \bk v \cdot \BW _2 \d x \d t
    \bigg| \lesssim C ^k U _{k - 1} ^{\min\{\frac43, \frac53-\frac23{p_3}\}}.
\end{align}

\subsection{Lower Order Terms}
For the bilinear and linear term, recall that inside $B _1$,
\begin{align*}
    \BB &= -\curl \La \inv (\grad \vp \cross (\omega \cross u)), \\
    \BL &= \curl \La \inv \left(2 \div (\grad \vp \tensor \omega) - (\La \vp) \omega \right). 
\end{align*}
Therefore, 
\begin{align*}
    &\| \ek \BB \| _{\Lp3 L ^\infty _x} \le \| \omega \cross u \| _{\Lp3 L ^\frac65 _x (Q _2)} \le \| u \| _{\Lp1 \Lq3} \| \omega \| _{\Lp2 \Lq2} \le \eta, \\
    &\| \ek \BL \| _{\Lp2 L ^\infty _x} \le \| \omega \| _{\Lp2 \Lq2 (Q _2)} \le \eta, 
\end{align*}
Thus
\begin{align}
    \label{eqn:lower-order-1}
    \iint \BB \cdot \ek \bk v \d x \d t
    &\le C ^k U _{k - 1} ^{\frac53 - \frac2{3 p _3}}, \\
    \label{eqn:lower-order-2}
    \iint \BL \cdot \ek \bk v \d x \d t
    &\le C ^k U _{k - 1} ^{\frac53 - \frac2{3 p _2}}.
\end{align}

\subsection{W Terms}
Finally, let us deal with
\begin{align*}
    \BW + \BW _2 &= -\omega \cross w + \frac12 \Pgrad \left(
        \varpi \cross u + \omega \cross w 
    \right) - \grad \RR (w \tensor v).
\end{align*}
Here $\grad \RR = \frac12 \grad \tr - \Pgrad \div$, so
\begin{align*}
    \grad \RR (w \tensor v) 
    &= \frac12 \grad (w \cdot v) - \Pgrad \div (v \tensor w) \\
    &= \frac12 \left(
        w \cdot \grad v + v \cdot \grad w + w \cross \curl v + v \cross \curl w
    \right) - \Pgrad (v \cdot \grad w) \\
    &= \frac12 \left(
        w \cdot \grad v - v \cdot \grad w
    \right) + \Pcurl (v \cdot \grad w) \\
    & \qquad  + \frac12 \left(
        w \cross \curl v + v \cross \curl w
    \right), \\
    \grad \RR (w \tensor v) &= \Pgrad (\grad \RR (w \tensor v)) \\
    &= \frac12 \Pgrad \left(
        w \cdot \grad v - v \cdot \grad w
    \right) + \frac12 \Pgrad \left(
        w \cross \curl v + v \cross \curl w
    \right) \\
    &= \frac12 \Pgrad \left(
        \curl (v \cross w) - v \div w + w \div v
    \right) \\
    &\qquad 
    + \frac12 \Pgrad \left(
        w \cross \curl v + v \cross \curl w
    \right) \\ 
    &= -\frac12 \Pgrad \left(
        v (u \cdot \grad \vp)
    \right) + \frac12 \Pgrad \left(
        w \cross \curl v + v \cross \curl w
    \right). 
\end{align*}
Hence
\begin{align*}
    \BW + \BW _2 &= -\omega \cross w + \frac12 \Pgrad \left(
        v (u \cdot \grad \vp)
    \right) \\
    &\qquad + \frac12 \Pgrad \left(
        \varpi \cross u + \omega \cross w 
        + \curl v \cross w + \curl w \cross v
    \right).
\end{align*}
Again, we separate $\BW + \BW _2$ into exterior and interior part, with
\begin{align*}
    \BW + \BW _2 = \BW _{\mathrm{ext}} + \BW _{\mathrm{int}}
\end{align*}
where
\begin{align*}
    \BW _{\mathrm{ext}} &= -(1 - \ekp) \omega \cross w + \frac12 \Pgrad \left(
        v (u \cdot \grad \vp)
    \right) \\
    &\qquad + \frac12 \Pgrad \left(
        (1 - \ekp) \left(
            \varpi \cross u + \omega \cross w 
            + \curl v \cross w + \curl w \cross v
        \right)
    \right), \\
    \BW _{\mathrm{int}} &= -\ekp \omega \cross w \\
    &\qquad + \frac12 \Pgrad \left(
        \ekp \left(
            \varpi \cross u + \omega \cross w 
            + \curl v \cross w + \curl w \cross v
        \right)
    \right) \\
    &= -\ekp \curl v \cross w - \ekp \varpi \cross w \\
    &\qquad + \frac12 \Pgrad \left(
        \ekp \left(
            \varpi \cross u + \curl w \cross v + \varpi \cross w 
        \right)
    \right)\\
    &\qquad + \frac12 \Pgrad \left(
        \ekp \left(
            \omega \cross w + \curl v \cross w - \varpi \cross w
        \right)
    \right) \\
    &= -\ekp \curl v \cross w - \ekp \varpi \cross w \\
    &\qquad + \Pgrad \left(
        \ekp \varpi \cross u 
    \right) + \Pgrad \left(
        \ekp \curl v \cross w
    \right) \\
    &= -\Pcurl (\ekp \curl v \cross w) - \Pcurl(\ekp \varpi \cross w) + \Pgrad \left(
        \ekp \varpi \cross v
    \right).
\end{align*}
Similar as bilinear terms, $\ek \BW _{\mathrm{ext}}$ is small in $\Lp3 L ^\infty _x$. Among the three terms in $\BW _{\mathrm{int}}$, $\ekp \varpi \cross w$ is bounded in $\Lp3 L ^\infty _x$, and $\ekp \varpi$ is in $\Lp2 L ^\infty _x$. Finally, for the first term,
\begin{align*}
    \Pcurl (\curl v \cross \ekp w) &= -\Pcurl (\curl \ekp w \cross v) + \Pcurl(v \cdot \grad \ekp w + \ekp w \cdot \grad v), \\
    \Pcurl(\ekp w \cdot \grad v) &= \Pcurl(
        \curl (v \cross \ekp w) + v \cdot \grad \ekp w - v \div \ekp w
    ) \\
    &= \curl (v \cross \ekp w)
    + \Pcurl(
        v \cdot \grad \ekp w - v \div \ekp w
    ), \\
    \curl (v \cross \ekp w) &= v \div \ekp w + \ekp w \cdot \grad v - v \cdot \grad \ekp w.
\end{align*}
Every term is a product of $v$ and $\grad \ekp w$ (possibly with a Riesz transform) except $\ekp w \cdot \grad v$.
Because in $\Omega _k$, $\grad |v| = \grad v _k$ are the same, we have
\begin{align*}
    \int \ek \bk v \cdot (\ekp w \cdot \grad) v \d x &=
    \int \ek \bk (w \cdot \grad) \frac{|v| ^2}{2} \d x \\
    &= \int \ek \bk |v| (w \cdot \grad) |v| \d x \\
    &= \int \ek v _k (w \cdot \grad) v _k \d x \\
    &= \int \ek (w \cdot \grad) \frac{v _k ^2}2 \d x \\
    &= -\int \frac{v _k ^2}{2} \div (\ek w) \d x.
\end{align*}
Therefore, every term of $\Pcurl (\curl v \cross \ekp w)$ is a product of $v$ and $\grad \ek w$ or $\grad \ekp w$. Inside $B _1$, $w \in \Lp1 C ^\infty _x$. In conclusion,
\begin{align*}
    \iint \ek \bk v \cdot \BW _{\textrm{ext}} \d x \d t &\le C ^k U _{k - 1} ^{\frac53 - \frac2{3 p _3}}, \\
    \iint \ek \bk v \cdot \Pcurl (\ekp \curl v \cross w) \d x \d t &\le C ^k U _{k - 1} ^{\frac53 - \frac2{3 p _1}}, \\
    \iint \ek \bk v \cdot \Pcurl(\ekp \varpi \cross w) \d x \d t &\le C ^k U _{k - 1} ^{\frac53 - \frac2{3 p _3}}, \\
    \iint \ek \bk v \cdot \Pgrad (\ekp \varpi \cross v) \d x \d t &\le C ^k U _{k - 1} ^{\frac53 - \frac2{3 p _2}}.
\end{align*}
So the sum is bounded in
\begin{align}
    \label{eqn:W+W2}
    \iint \ek \bk v \cdot (\BW + \BW _2) \d x \d t &= 
    \iint \ek \bk v \cdot (\BW _{\textrm{int}} + \BW _{\textrm{ext}}) \d x \d t \le C ^k U _{k - 1} ^{\frac53 - \frac2{3 p _3}}
\end{align}
provided $U _{k - 1} < 1$.

\subsection{Proof of Proposition \ref{prop:v-L-inf}}

\begin{proof}[Proof of Proposition \ref{prop:v-L-inf}]
Coming back to \eqref{nonlinearize}, by estimates \eqref{eqn:vk2-term} on the first term, \eqref{eqn:trilinear} on the trilinear terms, \eqref{eqn:lower-order-1}, \eqref{eqn:lower-order-2} on the $\BB, \BL$ terms and \eqref{eqn:W+W2} on the $\BW$ terms, we conclude that
\begin{align*}
    U _{k} \le C ^k U _{k - 1} ^{\min\{\frac53 - \frac2{3 p _3}, \frac43\}}
\end{align*}
provided $U _{k - 1} < 1$. Here $p _3 > 1$ ensures the index is strictly greater than 1. Since
\begin{align*}
    U _0 &= \sup _{t \in (-1, 0)} \int |v _0| ^2 \d x + \int _{-1} ^0 \int _{B _1} d _0 ^2 \d x \d t \\
    &= \sup _{t \in (-1, 0)} \int |v| ^2 \d x + \int _{-1} ^0 \int _{B _1} |\grad v| ^2 \d x \d t \le \eta
\end{align*}
by Proposition \ref{prop:v-L-inf-L-2}, we know that if $\eta$ is small enough, $U _k \to 0$ as $k \to \infty$. So in $Q _\frac12$, $|v| \le 1$ a.e.. This finishes the proof of Proposition \ref{prop:v-L-inf}.
\end{proof}

\section{Local Study: Part Three, More Regularity}

In this section, we will show that the vorticity $\omega$ is smooth in space. We will only work with the vorticity equation from now on. After the previous two steps, in $B _\frac12$ we should always decompose $u = v + w$, because $v$ is bounded and $w$ is harmonic. 

For convenience, given a vector $\omega$, we denote
\begin{align*}
    \omega ^0 := \frac{\omega}{|\omega|}, \qquad \omega ^\alpha := |\omega| ^\alpha \omega ^0, \alpha \in \R.
\end{align*}
\newcommand{\pb}{\partial _\bullet}
Let $\pb$ be the partial derivative in any space direction or time, then we have
\begin{align*}
    \pb (|\omega| ^\alpha) 
    &= \alpha \omega ^{\alpha - 1} \cdot \pb \omega, \\
    \pb (\omega ^\alpha) 
    &= |\omega| ^{\alpha - 1} \pb \omega + (\alpha - 1) (\omega ^{\alpha - 2} \cdot \pb \omega)
    \omega, \\
    \frac1\alpha \pb \pb (|\omega| ^\alpha) 
    &= |\omega| ^{\alpha - 2} |\pb \omega| ^2 + (\alpha - 2) 
    (\omega ^{\frac\alpha2 - 1} \cdot \pb \omega) ^2
    + \omega ^{\alpha - 1} \cdot \pb \pb \omega \\ 
    &\ge (\alpha - 1) (\omega ^{\frac\alpha2 - 1} \cdot \pb \omega) ^2
    + \omega ^{\alpha - 1} \cdot \pb \pb \omega \\ 
    &=  \frac{4(\alpha - 1)}{\alpha ^2} \left| \pb \omega ^{\frac\alpha2} \right| ^2
    + \omega ^{\alpha - 1} \cdot \pb \pb \omega.
\end{align*}

\newcommand{\grada}{\grad ^\alpha}
\newcommand{\gradb}{\grad ^\beta}



\subsection{Bound Vorticity in the Energy Space}
We will first show $\omega$ is bounded in the energy space.

\begin{proposition}
\label{prop:omega-energy-space}
If $u = v + w$ in $Q _\frac12$, where $v, w$ are bounded in
\begin{align}
    \label{eqn:hypotheses-for-v-w}
    \| v \| _{L ^\infty (Q _\frac12)}
    + \| \grad v \| _{L ^2 (Q _\frac12)} \le 2, \\
    \label{eqn:hypotheses-for-v-w-2}
    \| \curl w \| _{L ^2 _t L ^\frac32 _x (Q _\frac12)}
    + \| w \| _{L ^\frac43 _t \Lip _x (Q _\frac12)}
    \le 2,
\end{align}
$\omega = \curl u$ solves the vorticity equation \eqref{eqn:vorticity},
then 
\begin{enumerate}[\upshape (a)]
    \item $\| \omega ^\frac34 \| _{\mathcal E (Q _\frac14)} \le C$,
    \item $\| \omega \| _{\mathcal E (Q _\frac18)} \le C$,
\end{enumerate}
\end{proposition}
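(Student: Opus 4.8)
The plan is to run a weighted energy estimate on powers of the vorticity, based on the two convexity identities displayed just above the statement. For a parameter $\alpha > 1$, dot the vorticity equation \eqref{eqn:vorticity} with $\omega ^{\alpha - 1}$ and use $\pb (|\omega| ^\alpha) = \alpha \omega ^{\alpha - 1} \cdot \pb \omega$ together with the displayed lower bound for $\frac1\alpha \pb \pb (|\omega| ^\alpha)$; this gives the pointwise differential inequality
\begin{align*}
    \frac1\alpha (\pt + u \cdot \grad - \La) |\omega| ^\alpha + \frac{4(\alpha - 1)}{\alpha ^2} \left|\grad \omega ^{\alpha/2}\right| ^2 \le |\omega| ^\alpha \, \omega ^0 \cdot \grad u \cdot \omega ^0 \le |\omega| ^\alpha |\grad u|,
\end{align*}
which we use distributionally (made rigorous by difference quotients as in \cite{Constantin1990, Lions1996}). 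I would prove (a) with $\alpha = \frac32$ and then bootstrap (b) with $\alpha = 2$. A useful a priori fact is $\omega = \curl v + \curl w \in L ^2 _t L ^{3/2} _x (Q _\frac12)$: indeed $\curl v \in L ^2 _{t,x} (Q _\frac12) \subset L ^2 _t L ^{3/2} _x (Q _\frac12)$ by \eqref{eqn:hypotheses-for-v-w}, while $\curl w \in L ^2 _t L ^{3/2} _x (Q _\frac12)$ by \eqref{eqn:hypotheses-for-v-w-2}; equivalently $\int _{B _\frac12} |\omega| ^{3/2} \d x \in L ^{4/3} _t \subset L ^1 _t$. Also $u = v + w \in L ^{4/3} _t L ^\infty _x (Q _\frac12)$, since $v$ is bounded and $w \in L ^{4/3} _t \Lip _x$.

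For (a), multiply the inequality with $\alpha = \frac32$ by $\vp ^{2m}$, where $\ind{Q _\frac14} \le \vp \le \ind{Q _\frac12}$ is a smooth cutoff and $m$ a fixed integer, and integrate in space. Write $E(t) = \int \vp ^{2m} \frac{|\omega| ^{3/2}}{2} \d x$ and $D(t) = \int \vp ^{2m} \left|\grad \omega ^{3/4}\right| ^2 \d x$. The parabolic and transport terms produce $D(t)$ (with a positive constant) and cutoff commutators bounded by $(1 + \|u(t)\| _{L ^\infty}) \int _{B _\frac12} |\omega| ^{3/2} \d x$, which lie in $L ^1 _t$ by the a priori bound (the piece of the commutator multiplied by $w$ is rerouted into a Gr\"onwall term using a slightly larger nested cutoff). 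The crux is the stretching term $\int \vp ^{2m} |\omega| ^{3/2} |\grad u| \d x$. Splitting $\grad u = \grad v + \grad w$: the $\grad w$ piece is $\le \|\grad w(t)\| _{L ^\infty} \cdot 2E(t)$, a Gr\"onwall term with $\|\grad w\| _{L ^1 _t L ^\infty _x} \le C$. For the $\grad v$ piece --- which inherits only the same borderline integrability as $\omega$ itself --- I would integrate by parts in space, moving the derivative off $v$ (bounded) and onto $|\omega| ^{3/2}$ and $\omega ^0 \tensor \omega ^0$; using $\left|\grad |\omega| ^{3/2}\right| \lesssim |\omega| ^{3/4} \left|\grad \omega ^{3/4}\right|$ and $|\omega| ^{3/2} |\grad \omega ^0| \lesssim |\omega| ^{3/4} \left|\grad \omega ^{3/4}\right|$, one gets
\begin{align*}
    \left| \int \vp ^{2m} |\omega| ^{3/2} \omega ^0 \cdot \grad v \cdot \omega ^0 \d x \right| \lesssim \|v\| _{L ^\infty} \left( \int _{B _\frac12} |\omega| ^{3/2} \d x + \int \vp ^{2m} |\omega| ^{3/4} \left|\grad \omega ^{3/4}\right| \d x \right),
\end{align*}
and Cauchy--Schwarz plus Young bound the last integral by $\e D(t) + C _\e \|v\| _{L ^\infty} ^2 E(t)$. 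Absorbing $\e D$ into the left side and collecting everything yields $\frac{\d}{\d t} E + c D \le \Phi(t)(1 + E(t))$ with $\Phi \in L ^1 _t$; since $E \equiv 0$ for $t \le -\frac14$, Gr\"onwall gives $\sup _t E + \int D \le C$, which is (a).

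For (b), the conclusion of (a) --- $\omega ^{3/4} \in \mathcal E(Q _\frac14)$, hence by Sobolev embedding $\omega \in L ^{8/3} _t L ^{9/2} _x (Q _\frac14) \subset L ^2 _{t,x}(Q _\frac14)$ with a quantitative bound --- supplies exactly the extra integrability needed. I would repeat the argument verbatim with $\alpha = 2$ on $Q _\frac18 \subset Q _\frac14$: now the convexity identity is an equality with good term $\int \vp ^{2m} |\grad \omega| ^2 \d x$, the integration by parts on the $\grad v$ stretching term gives $\lesssim \|v\| _{L ^\infty} \int \vp ^{2m} |\omega| |\grad \omega| \d x$, again absorbable, and all commutator and leftover terms are dominated by $\int _{B _\frac14} |\omega| ^2 \d x \in L ^{4/3} _t$, now a known finite quantity. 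Gr\"onwall closes the estimate on $Q _\frac18$, giving (b).

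The main obstacle is the vortex-stretching term $\omega \cdot \grad u$: because $\grad v$ inherits only the same critical integrability one is trying to establish for $\omega$, it cannot be treated as a forcing term, and $\alpha = \frac32$ is exactly the threshold at which $\omega ^{\alpha/2} \in \mathcal E \hookrightarrow L ^{10/3}$ just fails to make $|\omega| ^\alpha |\grad v|$ locally integrable. The integration-by-parts trick, available precisely because $v$ is bounded (Proposition \ref{prop:v-L-inf}), is what lets the estimate close. The rest is careful bookkeeping of the cutoff errors, which have only limited temporal integrability and must each be routed either into an $L ^1 _t$ forcing term or into the Gr\"onwall exponent.
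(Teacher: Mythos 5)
Your proposal is essentially the paper's proof: same convexity inequality on powers $|\omega|^\alpha$, same choices $\alpha = \frac32$ for part (a) and $\alpha = 2$ for part (b), same splitting $\grad u = \grad v + \grad w$ with an integration by parts on the $\grad v$ stretching term exploiting the $L^\infty$ bound on $v$, and the same Gr\"onwall closure. The one place you should be careful is the embedding used to launch part (b). From $\omega^{3/4} \in \mathcal{E}(Q_{1/4})$ you obtain $\omega \in L^\infty_t L^{3/2}_x \cap L^{3/2}_t L^{9/2}_x$, but not $L^{8/3}_t L^{9/2}_x$ as you claim; the correct interpolant to use is $L^4_t L^2_x$ (taking $\theta = \frac58$, so that $q = 2$ gives $p = 4$), which is what the paper does. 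This matters quantitatively: the Gr\"onwall integrand coming from the transport and stretching terms is roughly $\|u\|_{L^\infty_x}\|\omega\|_{L^2_x}$, which is $L^1_t$ because $\frac34 + \frac14 = 1$; with only $L^{8/3}_t L^2_x$ the exponents would sum to $\frac34 + \frac38 = \frac98 > 1$ and the estimate would not close. With $L^4_t L^2_x$ in hand, the rest of your bookkeeping goes through as the paper's does.
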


\begin{proof}[Proof of Proposition \ref{prop:omega-energy-space} (a)]
We fix a pair of smooth space-time cut-off functions $\vr$ and $\vs$ which satisfy
\begin{align*}
    \ind{Q _\frac18} \le \vs \le \ind{Q _\frac14} \le \vr \le \ind{Q _\frac12}.
\end{align*}

Take the dot product of the vorticity equation \eqref{eqn:vorticity} with $\frac32 \omega ^\frac12$:
\begin{align*}
    \frac32 \omega ^\frac12 \cdot \pt \omega &= \pt (|\omega| ^\frac32), \\
    \frac32 \omega ^\frac12 \cdot (u \cdot \grad) \omega &= (u \cdot \grad) (|\omega| ^\frac32), \\
    \frac32 \omega ^\frac12 \cdot \La \omega &\le \La (|\omega| ^\frac32) - \frac43 |\grad \omega ^\frac34| ^2.
\end{align*}
Therefore,
\begin{align*}
    (\pt + u \cdot \grad - \La) (|\omega|^\frac32) + \frac32 \omega \cdot \grad u \cdot \omega^\frac12 + \frac43 |\grad \omega ^\frac34| ^2 \le 0.
\end{align*}
Multiply by $\vr ^6$ then integrate over space,
\begin{align}
    \label{eqn:varrho-1}
    \int \vr ^6 (\pt + u \cdot \grad - \La) (|\omega|^\frac32) \d x + 
    \frac43 \int \vr ^6 |\grad \omega ^\frac34| ^2 \d x 
    &\le -\frac32 \int \vr ^6 \omega \cdot \grad u \cdot \omega^\frac12 \d x.
\end{align}

For the left hand side, we can integrate by part,
\begin{align}
    \label{eqn:varrho-2}
    &\int \vr ^6 (\pt + u \cdot \grad - \La) (|\omega|^\frac32) \d x
    \\
    \notag
    &\qquad
    = \frac{\d}{\d t} \int \vr ^6 |\omega| ^\frac32 \d x
    - \int \left(
        (\pt + u \cdot \grad + \La) \vr ^6
    \right) |\omega|^\frac32 \d x,
\end{align}
where the latter can be controlled by
\begin{align}
    \label{eqn:varrho-3}
    \int \left(
        (\pt + u \cdot \grad + \La) \vr ^6
    \right) |\omega|^\frac32 \d x
    \le C \left(
        1 + \| u \| _{L ^\infty (B _\frac12)}
    \right) \int \vr ^4 |\omega| ^\frac32 \d x.
\end{align}
For the right hand side, using $u = v + w$ over the support of $\vr$ we can separate
\begin{align}
    \label{eqn:varrho-4}
    \int \vr ^6 \omega \cdot \grad u \cdot \omega ^\frac12 \d x
    = 
    \int \vr ^6 \omega \cdot \grad v \cdot \omega ^\frac12 \d x + 
    \int \vr ^6 \omega \cdot \grad w \cdot \omega ^\frac12 \d x, 
\end{align}
The $\grad v$ term can be controlled by
\begin{align}
    \label{eqn:varrho-5}
    \int \vr ^6 \omega \cdot \grad v \cdot \omega ^\frac12 \d x
    &= -\int \omega \cdot \grad (\vr ^6 \omega ^\frac12) \cdot v \d x
    \\
    \notag
    &= -\int \vr ^6 \omega \cdot \grad (\omega ^\frac12) \cdot v \d x
    - \int \omega \cdot (\omega ^\frac12 \tensor \grad \vr ^6) \cdot v \d x,
\end{align}
where
\begin{align*}
    \omega \cdot \grad (\omega ^\frac12) 
    &= |\omega| ^{-\frac12} \omega \cdot \grad \omega - \frac12 (\omega \cdot \grad \omega \cdot \omega ^{-\frac32})
    \omega 
    = \omega ^\frac12 \cdot \grad \omega - \frac12 (\omega ^\frac12 \cdot \grad \omega \cdot \omega ^0) \omega^0 \\
    \Rightarrow
    |\omega \cdot \grad (\omega ^\frac12)| 
    &\le \left| \frac32 \omega ^\frac12 \cdot \grad \omega \right| 
    = 2 |\omega| ^\frac34 \left| \frac34 \omega ^{-\frac14} \cdot \grad \omega \right|
    = 2 |\omega| ^\frac34 \left| \grad |\omega| ^\frac34 \right|
    \\
    &\le 2 |\omega| ^\frac34 |\grad \omega ^\frac34| 
    \le |\omega| ^\frac32 + |\grad \omega ^\frac43| ^2.
\end{align*}
Here the second to the last inequality is due to $\partial _i |\omega| ^\frac34 = \partial _i \omega ^\frac34 \cdot \omega ^0$. Since $|v| \le 1$ over the support of $\vr$,
\begin{align}
    \label{eqn:varrho-6}
    \int \vr ^6 \omega \cdot \grad (\omega ^\frac12) \cdot v \d x
    &\le 
    \int \vr ^6 |\omega| ^\frac32 \d x
    + \int \vr ^6 | \grad \omega ^\frac34 | ^2 \d x.
\end{align}
By using \eqref{eqn:varrho-2}-\eqref{eqn:varrho-6} in \eqref{eqn:varrho-1}, we conclude
\begin{align*}
    &\frac{\d}{\d t} \int \vr ^6 |\omega| ^\frac32 \d x
    + \frac43 \int \vr ^6 |\grad \omega ^\frac34| ^2 \d x 
    \\
    &\qquad 
    \le \int \left[
        (\pt + u \cdot \grad + \La) \vr ^6
    \right] |\omega|^\frac32 \d x 
    \\
    &\qquad \qquad 
    + \int \vr ^6 \omega \cdot \grad w \cdot \omega ^\frac12 \d x
    \\
    &\qquad \qquad 
    + \int \omega \cdot (\omega ^\frac12 \tensor \grad \vr ^6) \cdot v \d x 
    \\
    &\qquad \qquad 
    +
    \int \vr ^6 |\omega| ^\frac32 \d x
    + \int \vr ^6 | \grad \omega ^\frac34 | ^2 \d x 
    \\
    &
    \frac{\d}{\d t} \int \vr ^6 |\omega| ^\frac32 \d x
    + \frac13 \int \vr ^6 |\grad \omega ^\frac34| ^2 \d x
    \\
    &\qquad 
    \le C \left(
        1
        + \| u (t) \| _{L ^{\infty} (B _\frac12)} 
        + \| \grad w (t) \| _{L ^{\infty} (B _\frac12)}
    \right) \int \vr ^4 |\omega| ^\frac32 \d x.
\end{align*}
By H\"older's inequality,
\begin{align*}
    \int \vr ^4 |\omega| ^\frac32 \d x \le 
    \| \omega (t) \| _{L ^\frac32 (B _\frac12)} ^\frac12 \left(
        \int \vr ^6 |\omega| ^\frac32 \d x
    \right) ^\frac23.
\end{align*}
Therefore we can write
\begin{align*}
    &\frac{\d}{\d t} \int \vr ^6 |\omega| ^\frac32 \d x
    + \frac13 \int \vr ^6 |\grad \omega ^\frac34| ^2 \d x 
    \le C \Phi (t) \left(
        1 +
        \int \vr ^6 |\omega| ^\frac32 \d x
    \right),
\end{align*}
where
\begin{align*}
    \Phi (t) 
    &= \left(
        1
        + \| u (t) \| _{L ^{\infty} (B _\frac12)} 
        + \| \grad w (t) \| _{L ^{\infty} (B _\frac12)}
    \right) \| \omega (t) \| _{L ^\frac32 (B _\frac12)} ^\frac12 \\
    &\le
    \left(
        2 + 
        \| w (t) \| _{L ^{\infty} (B _\frac12)} 
        + \| \grad w (t) \| _{L ^{\infty} (B _\frac12)}
    \right) 
    \\
    &\qquad 
    \times \left(
        \| \curl v (t) \| _{L ^\frac32 (B _\frac12)} ^\frac12
        + \| \curl w (t) \| _{L ^\frac32 (B _\frac12)} ^\frac12
    \right)
\end{align*}
since $u = w + v$, and $|v| \le 1$ inside $B _\frac12$. By \eqref{eqn:hypotheses-for-v-w},
\begin{align*}
    \int _{-\frac14} ^0 \Phi (t) \d t \lesssim \left(
        1 + \| w \| _{L ^\frac43 _t \Lip _x (Q _\frac12)}
    \right) \left(
        \| \grad v \| _{L ^2 (Q _\frac12)} ^\frac12 + \| \curl w (t) \| _{L ^2 _t L ^\frac32 _x (Q _\frac12)} ^\frac12
    \right) \le C.
\end{align*}
%
So by Gr\"onwall's inequality, 
\begin{align*}
    \| \omega ^\frac34 \| _{\LLLH (Q _\frac14)} ^2 \le e ^C - 1.
\end{align*}
\end{proof}

\begin{proof}[Proof of Proposition \ref{prop:omega-energy-space} (b)]
From Proposition \ref{prop:omega-energy-space} (a) and Sobolev embedding,
\begin{align*}
    \| \omega \| _{L ^\infty _t L ^\frac32 _x \cap L ^\frac32 _t L ^\frac92 _x (Q _\frac14)} \le C,
\end{align*}
this interpolates the space
\begin{align*}
    \| \omega \| _{L ^4 _t L ^2 _x (Q _\frac14)} \le C.
\end{align*}
Multiply the vorticity equation \eqref{eqn:vorticity} by $\vs ^2 \omega$ then integrate over $\R ^3$,
\begin{align*}
    \frac{\d}{\d t} \int \vs ^2 \frac{|\omega| ^2}{2} \d x + \int \vs ^2 |\grad \omega| ^2 \d x &= \int (\pt \vs ^2 + \La \vs ^2) \frac{|\omega| ^2}{2} \d x \\
    &\qquad - \int (u \cdot \grad \omega) \cdot \vs ^2 \omega \d x \\
    &\qquad + \int (\omega \cdot \grad u) \cdot \vs ^2 \omega \d x.
\end{align*}
The first integral is $L ^1$ in time because $\omega \in L ^4 _t L ^2 _x$. For the second, 
\begin{align*}
    \int (u \cdot \grad \omega) \cdot \vs ^2 \omega \d x &= \int \vs ^2 u \cdot \grad \frac{|\omega| ^2}{2} \d x \\
    &= -\int \frac{|\omega| ^2}{2} u \cdot \grad \vs ^2 \d x \\
    &= -\int \vs |\omega| ^2 u \cdot \grad \vs \\
    &\le \| \vs \omega \| _{L ^2} \|u \cdot \grad \vs |\omega| \| _{L ^2},
\end{align*}
the latter is bounded $L ^1$ in time, by $u \in L ^\frac43 _t L ^\infty _x$ and $\omega \in L ^4 _t L ^2 _x$. For the third integral,
\begin{align*}
    \int (\omega \cdot \grad u) \cdot \vs ^2 \omega \d x = \int (\omega \cdot \grad v) \cdot \vs ^2 \omega \d x + \int (\omega \cdot \grad w) \cdot \vs ^2 \omega \d x.
\end{align*}
$w$ is bounded in $L ^\frac43 _t \Lip _x$, and for $v$,
\begin{align*}
    \int (\omega \cdot \grad v) \cdot \vs ^2 \omega \d x &= \int v \cdot (\omega \cdot \grad) (\vs ^2 \omega) \d x \\
    &= \int v \cdot \omega (\omega \cdot \grad \vs ^2) \d x + \int v \cdot (\vs ^2 \omega \cdot \grad \omega) \d x.
\end{align*}
The former is $L ^1$ in time, while the latter can be bounded by Cauchy-Schwartz,
\begin{align*}
    \int v \cdot (\vs ^2 \omega \cdot \grad \omega) \d x 
    \le \frac12 \int |v \tensor \vs \omega| ^2 \d x 
    + \frac12 \int \vs ^2 |\grad \omega| ^2 \d x.
\end{align*}
In conclusion,
\begin{align*}
    &\frac{\d}{\d t} \int \vs ^2 \frac{|\omega| ^2}{2} \d x 
    + \frac12 \int \vs ^2 |\grad \omega| ^2 \d x
    \\
    &\qquad 
    \le C \| \omega (t) \| _{L ^2 (B _\frac14)} ^2 
    + C \| u (t) \| _{L ^\infty (B _\frac14)} \| \omega (t) \| _{L ^2 (B _\frac14)} \| \vs \omega (t) \| _{L ^2} \\
    &\qquad \qquad
    + C \| \grad w \| _{L ^\infty (B _\frac14)} \| \vs \omega (t) \| _{L ^2} ^2 \\
    &\qquad \le C\Phi (t) \left(
        1 + \int \vs ^2 \frac{|\omega| ^2}{2} \d x 
    \right)
\end{align*}
where
\begin{align*}
    \Phi (t) = \| \omega (t) \| _{L ^2 (B _\frac14)} ^2 
    + \| u (t) \| _{L ^\infty (B _\frac14)} \| \omega (t) \| _{L ^2 (B _\frac14)}
    + \| \grad w (t) \| _{L ^\infty (B _\frac14)},
\end{align*}
whose integral is bounded using \eqref{eqn:hypotheses-for-v-w},
\begin{align*}
    \int _{-\frac1{16}} ^0 \Phi (t) \d t \le \| \omega \| _{L ^2 (Q _\frac14)} ^2 
    + \| u \| _{L ^{\frac43} _t L ^\infty _x (Q _\frac14)} 
    \| \omega \| _{L ^4 _t L ^2 _x (Q _\frac14)}  
    + \| \grad w \| _{L ^{\frac43} _t L ^\infty _x (Q _\frac14)} \le C.
\end{align*}
By a Gr\"onwall argument, we have
\begin{align*}
    \| \omega \| _{\LLLH (Q _\frac18)} ^2 \le e ^C - 1.
\end{align*}
\end{proof}

\subsection{Bound Higher Derivatives in the Energy Space}

\newcommand{\vn}{v}
\newcommand{\wn}{w}
\newcommand{\vpn}{\vp _n}
\newcommand{\vpns}{\vps _n}

Now we iteratively show higher derivatives of vorticity by induction.



\newcommand{\BP}{\mathbf P}
\newcommand{\BPvk}{\BP _{v, k}}
\newcommand{\BPwk}{\BP _{w, k}}

\begin{proposition}
\label{prop:omega-energy-space-n}
For any $n \ge 1$, if $u = v + w$ in $Q _{8 ^{-n}}$, where $v, w$ are bounded in
\begin{align}
    \label{eqn:hypotheses-for-v-w-n}
    \| v \| _{L ^\infty (Q _{8 ^{-n} / 2})}
    + \| v \| _{L ^2 _t H ^{n + 1} _x (Q _{8 ^{-n} / 2})} \le c _n, \\
    \| w \| _{L ^\frac43 _t C ^{n + 1} _x (Q _{8 ^{-n} / 2})}
    \le c _n,
\end{align}
for some constant $c _n$, $\omega = \curl u$ solves the vorticity equation \eqref{eqn:vorticity}, and is bounded in
\begin{align}
    \label{eqn:induction-condition}
    \| \omega \| _{L ^\infty _t H ^{n - 1} _x \cap L ^2 _t H ^n _x (Q _{8 ^{-n} / 2})} \le c _n,
\end{align}
then for any multiindex $\alpha$ with $|\alpha| = n$,
\begin{enumerate}[\upshape (a)]
    \item $\| \grada \omega ^\frac34 \| _{\mathcal E (Q _{8 ^{-n} / 4})} \le C _n$
    \item $\| \grada \omega \| _{\mathcal E (Q _{8 ^{-n-1}})} \le C _n$
\end{enumerate}
for some $C _n$ depending on $c _n$ and $n$.
\end{proposition}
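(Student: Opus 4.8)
The plan is to differentiate the vorticity equation $|\alpha|=n$ times and then run, on the resulting equation for $\grada\omega$, the same two Gr\"onwall arguments as in the proof of Proposition~\ref{prop:omega-energy-space}, absorbing the extra terms produced by the commutator $[\grada,\,\cdot\,]$ as source terms controlled by the induction hypothesis \eqref{eqn:induction-condition} together with the assumed regularity \eqref{eqn:hypotheses-for-v-w-n} of $v$ and $w$. Fix a multiindex $\alpha$ with $|\alpha|=n$ and write $\Omega:=\grada\omega$. Applying $\grada$ to \eqref{eqn:vorticity} and using the Leibniz rule gives
\begin{align*}
    \pt\Omega + u\cdot\grad\Omega - \La\Omega = \Omega\cdot\grad u + G,
\end{align*}
where $G:=\bigl(\grada(\omega\cdot\grad u) - \Omega\cdot\grad u\bigr) - [\grada, u\cdot\grad]\omega$ is a finite sum of terms of the schematic form $\grad^{\beta}\omega\tensor\grad^{\,n+1-|\beta|}u$ with $|\beta|\le n-1$, and $\grad^{\gamma}u\tensor\grad^{\,n+1-|\gamma|}\omega$ with $1\le|\gamma|\le n$. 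Thus $G$ involves derivatives of $\omega$ of order at most $n$ and derivatives of $u=v+w$ of order at most $n+1$, each of which, after substituting $u=v+w$, lies in a space supplied by \eqref{eqn:hypotheses-for-v-w-n}, \eqref{eqn:induction-condition} and $3$D Sobolev embedding.

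For part (a) I would fix space-time cut-offs $\vrn,\vsn$ with $\ind{Q _{8^{-n-1}}}\le\vsn\le\ind{Q _{8^{-n}/4}}\le\vrn\le\ind{Q _{8^{-n}/2}}$, dot the $\Omega$-equation with $\tfrac32\,\vrn^6\,\Omega^\frac12$, and use the pointwise inequality $\tfrac32\,\Omega^\frac12\cdot\La\Omega \le \La(|\Omega|^\frac32) - \tfrac43|\grad\Omega^\frac34|^2$ obtained from the algebraic identities at the start of this section with exponent $\tfrac32$. Exactly as in \eqref{eqn:varrho-1}--\eqref{eqn:varrho-5}, the transport term is integrated by parts in $u\cdot\grad$; the stretching term $\Omega\cdot\grad u\cdot\Omega^\frac12$ is split through $u=v+w$, with the $\grad w$ part bounded by $\|\grad w(t)\|_{L^\infty}\int\vrn^6|\Omega|^\frac32\,\d x$ and the $\grad v$ part integrated by parts and estimated using $|\Omega\cdot\grad\Omega^\frac12|\lesssim|\Omega|^\frac34|\grad\Omega^\frac34|$ and Young's inequality — now with a small constant depending on $\|v\|_{L^\infty}\le c_n$ rather than on the value $1$ — to absorb a fraction of $\int\vrn^6|\grad\Omega^\frac34|^2\,\d x$ into the left-hand side. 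The genuinely new contribution $\int\vrn^6\,\Omega^\frac12\cdot G\,\d x$ is treated summand by summand: one integration by parts moves the surplus derivative off the highest-order factor (off $\grad^{n+1}v$ in the terms $\omega\tensor\grad^{n+1}u$, and off $\grad^{n}\omega$ in the terms $\grad u\tensor\grad^{n}\omega$), after which H\"older's inequality closes the estimate in $L^1_t$ using $H^k\hookrightarrow L^\infty$ for $k\ge 2$, $\dot H^1\hookrightarrow L^6$, the energy interpolation $\LLLH\hookrightarrow L ^\frac{10}{3} _{t,x}$, \eqref{eqn:induction-condition} and \eqref{eqn:hypotheses-for-v-w-n}. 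Collecting terms into
\begin{align*}
    \frac{\d}{\d t}\int\vrn^6|\Omega|^\frac32\,\d x + \tfrac13\int\vrn^6|\grad\Omega^\frac34|^2\,\d x \le C\,\Phi(t)\Bigl(1 + \int\vrn^6|\Omega|^\frac32\,\d x\Bigr)
\end{align*}
with $\Phi\in L^1$ on the time-slice of $Q_{8^{-n}/2}$, Gr\"onwall's inequality yields $\|\grada\omega^\frac34\|_{\mathcal E(Q _{8^{-n}/4})}\le C_n$.

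For part (b) I would bootstrap as in the proof of Proposition~\ref{prop:omega-energy-space}(b): from (a) and Sobolev embedding, $\Omega\in L^\infty_tL^\frac32_x\cap L^\frac32_tL^\frac92_x(Q _{8^{-n}/4})$, which interpolates to $\Omega\in L^4_tL^2_x$; then, multiplying the $\Omega$-equation by $\vsn^2\Omega$ and integrating, the cut-off and transport terms are handled as in Proposition~\ref{prop:omega-energy-space}(b), the stretching $\Omega\cdot\grad u\cdot\Omega$ is split via $u=v+w$ with the $v$-part integrated by parts and a Cauchy--Schwarz absorbing $\tfrac12\int\vsn^2|\grad\Omega|^2\,\d x$, and the new source $\int\vsn^2\Omega\cdot G\,\d x$ is controlled by the same integration-by-parts-plus-Sobolev scheme as in part (a). A final Gr\"onwall argument gives $\|\grada\omega\|_{\mathcal E(Q _{8^{-n-1}})}\le C_n$, uniformly over the finitely many $\alpha$ of length $n$. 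Since the hypotheses at level $n$ are exactly the conclusions at level $n-1$ (with level $1$ seeded by Proposition~\ref{prop:omega-energy-space}), the statement is designed to be iterated.

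The main obstacle will be the bookkeeping for $G$: one must verify that, for every order split $(|\beta|,n+1-|\beta|)$ and $(|\gamma|,n+1-|\gamma|)$ occurring, the product $\grad^{\bullet}\omega\cdot\grad^{\bullet}u$ — after the single permitted integration by parts against $\vrn^6\Omega^\frac12$ (respectively $\vsn^2\Omega$) — has its two factors in dual Lebesgue-in-space spaces paired integrably in time, using only $\omega\in L^\infty_tH^{n-1}_x\cap L^2_tH^n_x$, $v\in L^\infty_{t,x}\cap L^2_tH^{n+1}_x$, $w\in L^\frac43_tC^{n+1}_x$ and $3$D Sobolev. The borderline cases are the top-order ones — $\omega\tensor\grad^{n+1}v$ and $\grad v\tensor\grad^{n}\omega$ for small $n$, where $\dot H^1$ embeds only into $L^6$ — and it is precisely there that the energy interpolation $\LLLH\hookrightarrow L ^\frac{10}{3} _{t,x}$ and the absorption against $\int\vrn^6|\grad\Omega^\frac34|^2\,\d x$ (or $\int\vsn^2|\grad\Omega|^2\,\d x$) are needed; the remaining terms are routine and parallel the proof of Proposition~\ref{prop:omega-energy-space}.
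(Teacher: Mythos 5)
Your overall skeleton — differentiate \eqref{eqn:vorticity} $n$ times, run the two Gr\"onwall arguments of Proposition~\ref{prop:omega-energy-space} on the equation for $\Omega=\grada\omega$, and control the commutator terms by the induction hypothesis — is exactly the paper's strategy, and your description of the structure of $G=-\BP_\alpha$ is accurate. Part~(b) of your plan is also essentially the paper's: the only term that needs an integration by parts there is the $k=0$ piece of the $v$-contribution, it is tested against $\vsn^2\Omega$ (not a fractional power), and the resulting $\grad\grada\omega$ is absorbed by Young's inequality. All of that is fine.

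The genuine gap is in part~(a). You propose to integrate by parts in $\int\vrn^6\,\Omega^\frac12\cdot G\,\d x$ to move a derivative off $\grad^{n+1}v$ (resp.\ off $\grad^n\omega$), and then absorb using $\int\vrn^6|\grad\Omega^\frac34|^2\,\d x$. But the test function is the \emph{fractional} power $\Omega^\frac12=|\Omega|^\frac12\Omega^0$, and its gradient carries a singular weight: schematically $|\grad(\Omega^\frac12)|\sim|\Omega|^{-\frac12}|\grad\Omega|\sim|\Omega|^{-\frac14}|\grad\Omega^\frac34|$. After Young's inequality the surviving term has an $|\Omega|^{-\frac12}$ factor multiplying $|\omega|^2|\grad^nv|^2$ (or $|\grad v|^2|\grad^n\omega|^2$), which is unbounded where $\Omega$ vanishes and is not controllable. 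So the integration by parts against $\vrn^6\Omega^\frac12$ does not close, and neither does the proposed absorption against $\int\vrn^6|\grad\Omega^\frac34|^2\,\d x$ for the commutator terms.

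The integration by parts is in fact unnecessary, and the paper never performs it in part~(a). The trick is that the low power on the test function buys you an $L^3$ bound essentially for free: by H\"older,
\begin{align*}
\int\vrn^6\,(\grada\omega)^\frac12\cdot\BP_\alpha\,\d x \le
\Bigl(\textstyle\int\vrn^{6}|\grada\omega|^\frac32\,\d x\Bigr)^\frac13\,\|\BP_\alpha\|_{L^\frac32_x}
\le \Bigl(1+\textstyle\int\vrn^6|\grada\omega|^\frac32\,\d x\Bigr)\,\|\BP_\alpha\|_{L^\frac32_x},
\end{align*}
so $\|\BP_\alpha\|_{L^\frac32_x}$ simply joins the Gr\"onwall coefficient $\Phi(t)$. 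Every summand of $\BP_\alpha$ is then estimated \emph{directly} in $L^1_tL^\frac32_x$: the borderline terms are
$\|\omega\tensor\grad^{n+1}v\|_{L^1_tL^\frac32_x}\le\|\omega\|_{L^2_tL^6_x}\|\grad^{n+1}v\|_{L^2_tL^2_x}$
(using $\omega\in\LLLH$ from \eqref{eqn:induction-condition}) and
$\|\grad^n\omega\tensor\grad v\|_{L^1_tL^\frac32_x}\le\|\grad^n\omega\|_{L^2_tL^2_x}\|\grad v\|_{L^2_tL^6_x}$
(using \eqref{eqn:hypotheses-for-v-w-n}), while the only term that cannot be put in $L^1_tL^\frac32_x$ on its own, namely $\grad^n\omega\tensor\grad w$, is bounded by $\|\grad w\|_{L^\infty_x}\,|\grad^n\omega|$ and folded into the Gr\"onwall coefficient with the main stretching term, using $w\in L^\frac43_tC^{n+1}_x$. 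No $L^\frac{10}{3}$ interpolation is needed either. If you replace the integration-by-parts step in part~(a) by this direct $L^3$--$L^\frac32$ pairing, the rest of your argument is sound and matches the paper.
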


\begin{proof}[Proof of Proposition \ref{prop:omega-energy-space-n} (a)]
Similarly we fix smooth cut-off functions $\vrn$ and $\vsn$ which satisfy
\begin{align*}
    \ind{Q _{8 ^{-n-1}}} \le \vsn \le \ind{Q _{8 ^{-n} / 4}} \le \vrn \le \ind{Q _{8 ^{-n} / 2}}.
\end{align*}

Differentiate \eqref{eqn:vorticity} by $\grada$,
\begin{align}
    \label{eqn:omega-a}
    \pt \grada \omega + u \cdot \grad \grada \omega - \grada \omega \cdot \grad u + \BP _{\alpha} = \La \grada \omega,
\end{align}
where
\begin{align*}
    \BP _\alpha = \sum _{\beta < \alpha} \begin{pmatrix}
        \alpha \\ \beta
    \end{pmatrix} \curl \left(
        \gradb \omega \cross \grad ^{\alpha - \beta} u
    \right).
\end{align*}
Multiply \eqref{eqn:omega-a} by $\frac32\vrn ^6 (\grada \omega) ^\frac12$ then integrate in space, 
\begin{align*}
    &\frac{\d}{\d t} \int \vrn ^6 |\grada \omega| ^\frac32 \d x
    + \frac43 \int \vrn ^6 |\grad \grada \omega ^\frac34| ^2 \d x 
    \\
    &\qquad 
    \le \int \left[
        (\pt + u \cdot \grad + \La) \vrn ^6
    \right] |\grada \omega|^\frac32 \d x 
    \\
    &\qquad \qquad 
    + \int \vrn ^6 \grada \omega \cdot \grad w \cdot (\grada \omega) ^\frac12 \d x
    \\
    &\qquad \qquad 
    + \int \grada \omega \cdot ((\grada \omega) ^\frac12 \tensor \grad \vrn ^6) \cdot v \d x 
    \\
    &\qquad \qquad 
    + \|v\| _{L ^\infty (Q _{8 ^{-n}})} ^2
    \int \vrn ^6 |\grada \omega| ^\frac32 \d x
    + \int \vrn ^6 | \grad \grada \omega ^\frac34 | ^2 \d x 
    \\
    &\qquad \qquad
    + \frac32 \int \vrn ^6 (\grada \omega) ^\frac12 \cdot \BP _\alpha \d x
\end{align*}
same as in the proof of Proposition \ref{prop:omega-energy-space} (a). So
\begin{align*}
    &
    \frac{\d}{\d t} \int \vrn ^6 |\grada \omega| ^\frac32 \d x
    + \frac13 \int \vrn ^6 |\grad \grada \omega ^\frac34| ^2 \d x 
    \\
    &\qquad 
    \le C \left(
        1
        + \| u (t) \| _{L ^{\infty} (B _{8 ^{-n}})} 
        + \| \grad w (t) \| _{L ^{\infty} (B _{8 ^{-n}})} 
    \right) \int \vrn ^4 |\grada \omega| ^\frac32 \d x
    \\
    &\qquad \qquad
    + \frac32 \int \vrn ^6 (\grada \omega) ^\frac12 \cdot \BP _\alpha \d x.
\end{align*}
Terms other than $\BP _\alpha$ are dealt with by the same way as in Proposition \ref{prop:omega-energy-space}: 
\begin{align*}
    \int \vrn ^4 |\grada \omega| ^\frac32 \d x \le 
    \| \grada \omega (t) \| _{L ^\frac32 (B _{8^{-n}})} ^\frac12 \left(
        \int \vrn ^6 |\grada \omega| ^\frac32 \d x
    \right) ^\frac23.
\end{align*}
The induction condition \eqref{eqn:induction-condition} ensures that $\| \grada \omega \| _{L ^2 (Q _{8 ^{-n}})} \le c _n$. Therefore
\begin{align*}
    &\int _{-8 ^{-2n}} ^0 \left(
        1
        + \| u (t) \| _{L ^{\infty} (B _{8 ^{-n}})} 
        + \| \grad w (t) \| _{L ^{\infty} (B _{8 ^{-n}})} 
    \right) \| \grada \omega (t) \| _{L ^\frac32 (B _{8^{-n}})} ^\frac12
    \d t \\
    &\qquad \lesssim \left(
        1 + \| v \| _{L ^\infty (B _{8 ^{-n}})} + \| w \| _{L ^\frac43 _t C ^1 _x (B _{8 ^{-n}})}
    \right) \| \grada \omega \| _{L ^2 (Q _{8 ^{-n}})} ^{\frac12} 
    \le C _n.
\end{align*}

Now let's focus on $\BP _\alpha$. 
\begin{align*}
    | \BP _\alpha | &\lesssim \sum _{k = 0} ^n |\grad ^k \omega| |\grad ^{n - k + 1} u| 
    \le \sum _{k = 0} ^n |\grad ^k \omega| |\grad ^{n - k + 1} v| + \sum _{k = 0} ^n |\grad ^k \omega| |\grad ^{n - k + 1} w|. 
\end{align*}
We denote 
\begin{align*}
    \BPvk = |\grad ^k \omega| |\grad ^{n - k + 1} v|, \qquad 
    \BPwk = |\grad ^k \omega| |\grad ^{n - k + 1} w|.
\end{align*}
First we estimate $\BPvk$. By \eqref{eqn:hypotheses-for-v-w-n} and \eqref{eqn:induction-condition}, when $k = 0$,
\begin{align*}
    \|\BP _{v, 0}\| _{L ^1 _t L ^\frac32 _x (Q _{8 ^{-n}})} 
    \le \| \omega \| _{L ^2 _t L ^6 _x (Q _{8 ^{-n}})} 
    \| \grad ^{n + 1} v \| _{L ^2 _t L ^2 _x (Q _{8 ^{-n}})} \le C _n,
\end{align*}
and when $0 < k \le n$,
\begin{align*}
    \|\BPvk \| _{L ^1 _t L ^\frac32 _x (Q _{8 ^{-n}})} 
    \le \| \grad ^{k} \omega \| _{L ^2 _t L ^2 _x (Q _{8 ^{-n}})} 
    \| \grad ^{n + 1 - k} v \| _{L ^2 _t L ^6 _x (Q _{8 ^{-n}})} \le C _n.
\end{align*}
Next we estimate $\BPwk$. When $0 \le k < n$,
\begin{align*}
    \|\BPwk \| _{L ^1 _t L ^\frac32 _x (Q _{8 ^{-n}})} 
    \le \| \grad ^{k} \omega \| _{L ^\infty _t L ^2 _x (Q _{8 ^{-n}})} 
    \| \grad ^{n + 1 - k} w \| _{L ^\frac43 _t L ^\infty _x (Q _{8 ^{-n}})} \le C _n.
\end{align*}
Finally, when $k = n$,
\begin{align*}
    | \BP _{w, n} | _{L ^\frac32 _x (B _{8 ^{-n}})}
    \le |\grad ^n \omega | | \grad w |.
\end{align*}
Therefore,
\begin{align*}
    &\int \vrn ^6 (\grada \omega) ^\frac12 \cdot \BP _\alpha \d x \le \left(
        1 + \int \vrn ^6 |\grada \omega| ^\frac32 \d x
    \right) \\
    &\qquad \times \left(
        \sum _{k = 0} ^n \|\BPvk \| _{L ^\frac32 _x (B _{8 ^{-n}})} 
        + \sum _{k = 0} ^{n - 1} \|\BPwk \| _{L ^\frac32 _x (B _{8 ^{-n}})} 
        + \| \grad w \| _{L ^\infty _x (B _{8 ^{-n}})}
    \right)
\end{align*}
In conclusion, we have shown that
\begin{align*}
    &\frac{\d}{\d t} \int \vrn ^6 |\grada \omega| ^\frac32 \d x
    + \frac13 \int \vrn ^6 |\grad \grada \omega ^\frac34| ^2 \d x 
    \le C \Phi (t) \left(
        1 + \int \vrn ^6 |\grad ^n \omega| ^\frac32 \d x
    \right),
\end{align*}
where
\begin{align*}
    \Phi (t) &= \left(
        1
        + \| u (t) \| _{L ^{\infty} (B _{8 ^{-n}})} 
        + \| \grad w (t) \| _{L ^{\infty} (B _{8 ^{-n}})} 
    \right) \| \grada \omega (t) \| _{L ^\frac32 (B _{8^{-n}})} ^\frac12 \\
    &\qquad + \sum _{k = 0} ^n \|\BPvk \| _{L ^\frac32 _x (B _{8 ^{-n}})} 
        + \sum _{k = 0} ^{n - 1} \|\BPwk \| _{L ^\frac32 _x (B _{8 ^{-n}})} 
        + \| \grad w \| _{L ^\infty _x (B _{8 ^{-n}})}
\end{align*}
with integral
\begin{align*}
    \int _{-8 ^{-2n} / 4} ^0 \Phi (t) \d t \le C _n.
\end{align*}

Taking the sum over all multi-index $\alpha$ with size $|\alpha| = n$, we have
\begin{align*}
    &\frac{\d}{\d t} \int \vrn ^6 |\grad ^{n} \omega| ^\frac32 \d x
    + \frac13 \int \vrn ^6 |\grad ^{n + 1} \omega ^\frac34| ^2 \d x 
    \le C \Phi (t) \left(
        1 + \int \vrn ^6 |\grad ^{n + 1} \omega| ^\frac32 \d x
    \right),
\end{align*}
Finally, Gr\"onwall inequality gives 
\begin{align*}
    \||\grad ^{n + 1} \omega| ^\frac34\| _{\LLLH (Q _{8 ^{-n} / 4})} \le C _n.
\end{align*}
\end{proof}

\begin{proof}[Proof of Proposition \ref{prop:omega-energy-space-n} (b)]
Now we multiply \eqref{eqn:omega-a} by $\vsn ^2 \grada \omega$ then integrate over $\R ^3$,
\begin{align*}
    \frac{\d}{\d t} \int \vsn ^2 \frac{|\grada \omega| ^2}{2} \d x + \int \vsn ^2 |\grad \grada \omega| ^2 \d x &= \int (\pt \vsn ^2 + \La \vsn ^2) \frac{|\grada \omega| ^2}{2} \d x \\
    &\qquad - \int (u \cdot \grad \grada \omega) \cdot \vsn ^2 \grada \omega \d x \\
    &\qquad + \int (\grada \omega \cdot \grad u) \cdot \vsn ^2 \grada \omega \d x \\
    &\qquad + \int \vsn ^2 \grada \omega \cdot \BP _\alpha \d x
\end{align*}
For the same reason, the only term that we need to take care of is $\BP _\alpha$ term, and the others are dealt the same as in Proposition \ref{prop:omega-energy-space} (b): 
\begin{align*}
    &\int (\pt \vsn ^2 + \La \vsn ^2) \frac{|\grada \omega| ^2}{2} \d x
     - \int (u \cdot \grad \grada \omega) \cdot \vsn ^2 \grada \omega \d x + \int (\grada \omega \cdot \grad u) \cdot \vsn ^2 \grada \omega \d x \\
    & \lesssim _n
    \| \grada \omega \| _{L ^2 (Q _{8 ^{-n} / 4})} ^2 + 
    \| u \| _{L ^\infty (Q _{8 ^{-n} / 4})} 
    \| \grada \omega \| _{L ^2 (Q _{8 ^{-n} / 4})} 
    \left( 
        \int \vsn ^2 \frac{| \grada \omega | ^2}2 \d x 
    \right) ^\frac12 \\
    &\qquad 
    + \| \grad w  \| _{L ^\infty (Q _{8 ^{-n} / 4})}
    \int \vsn ^2 \frac{| \grada \omega | ^2}2 \d x 
    + \| v \| _{L ^\infty (Q _{8 ^{-n} / 4})} 
    \| \grada \omega \| _{L ^2 (Q _{8 ^{-n} / 4})} ^2 \\
    &\qquad 
    + \frac1\e \| v \| _{L ^\infty (Q _{8 ^{-n} / 4})} ^2 
    \int \vsn ^2 \frac{| \grada \omega | ^2}2 \d x
    + \e \int \vsn ^2 |\grad \grada \omega| ^2 \d x.
\end{align*}
The last term can be absorbed into the left, and we will use Gr\"onwall on the remaining terms. 

Now we shall focus on the $\BP _\alpha$ term. From Proposition \ref{prop:omega-energy-space-n} (a), we have
\begin{align}
    \label{eqn:gradn-omega}
    \| \grad ^n \omega \| _{L ^\infty _t L ^\frac32 _x \cap L ^\frac32 _x L ^\frac92 _t (Q _{8 ^{-n} / 4})} \le C _n.
\end{align}
Again by interpolation,
\begin{align*}
    \| \grad ^n \omega \| _{L ^4 _t L ^2 _x (Q _{8 ^{-n} / 4})} \le C _n, \qquad \| \grad ^n \omega \| _{L ^2 _t L ^3 _x (Q _{8 ^{-n} / 4})} \le C _n, 
\end{align*}
First we estimate $\BPwk$. In this case, for any $0 \le k \le n$,
\begin{align*}
    \| \BPwk \| _{L ^1 _t L ^2 _x (Q _{8 ^{-n} / 4})} 
    \le \| \grad ^{k} \omega \| _{L ^4 _t L ^2 _x (Q _{8 ^{-n} / 4})} 
    \| \grad ^{n + 1 - k} w \| _{L ^\frac43 _t L ^\infty _x (Q _{8 ^{-n}})} \le C _n.
\end{align*}
Then we estimate $\BPvk$. When $0 < k \le n$,
\begin{align*}
    \|\BPvk \| _{L ^1 _t L ^2 _x (Q _{8 ^{-n}})} 
    \le \| \grad ^{k} \omega \| _{L ^2 _t L ^3 _x (Q _{8 ^{-n}})} 
    \| \grad ^{n + 1 - k} v \| _{L ^2 _t L ^6 _x (Q _{8 ^{-n}})} \le C _n.
\end{align*}
For the case $k = 0$ of the $\vn$ term, we put the curl on $\grada \omega$,
\begin{align*}
    &\int \vsn ^2 \grada \omega \cdot \curl \left(
        \omega \cross \grada \vn
    \right) \d x
    \\
    &\qquad 
    = \-\int \left(
        \omega \cross \grada \vn
    \right) \cdot \curl (\vsn ^2 \grada \omega) \d x 
    \\
    &\qquad
    \le \int 
    \vsn ^2 |\omega| 
    |\grada \vn| 
    |\grad \grada \omega| 
    + \vsn |\grad \vsn| |\omega| 
    |\grada \vn| 
    |\grada \omega| 
    \d x
    \\
    &\qquad
    \le \int \vsn ^2 |\omega| ^2 |\grada \vn| ^2 \d x
    + \e \int \vsn ^2 |\grad \grada \omega| ^2 \d x 
    + \frac1\e \int 
    |\grad \vsn| ^2 |\grada \omega| ^2
    \d x.
\end{align*}
where $|\grad \grada \omega|$ term can be absorbed to the left. By \eqref{eqn:gradn-omega} and Sobolev embedding,
\begin{align*}
    \| \omega \| _{L ^\infty _t L ^3 _x  (Q _{8 ^{-n} / 4})} \le C _n.
\end{align*}
Therefore
\begin{align*}
    \iint \vsn ^2 |\omega| ^2 |\grada \vn| ^2 \d x \d t
    \le \| \omega \|  _{L ^\infty _t L ^3 _x  (Q _{8 ^{-n} / 4})} ^2 
    \| \grada \vn \| _{L ^2 _t L ^6 _x  (Q _{8 ^{-n} / 4})} ^2 \le C _n.
\end{align*}
In conclusion,
\begin{align*}
    \frac{\d}{\d t} \int \vsn ^2 \frac{|\grada \omega| ^2}{2} \d x + \int \vsn ^2 |\grad \grada \omega| ^2 \d x \le C \Phi (t) \left(
        1 + \int \vsn ^2 \frac{|\grada \omega| ^2}{2} \d x
    \right),
\end{align*}
where
\begin{align*}
    \Phi (t) &= \| \grada \omega (t) \| _{L ^2 (B _{8 ^{-n} / 4})} ^2 + 
    \| u \| _{L ^\infty (B _{8 ^{-n} / 4})} 
    \| \grada \omega \| _{L ^2 (B _{8 ^{-n} / 4})} 
     \\
    &\qquad 
    + \| \grad w  \| _{L ^\infty (B _{8 ^{-n} / 4})}
    + \| v \| _{L ^\infty (B _{8 ^{-n} / 4})} 
    \| \grada \omega \| _{L ^2 (B _{8 ^{-n} / 4})} ^2 \\
    &\qquad 
    + \frac1\e \| v \| _{L ^\infty (B _{8 ^{-n} / 4})} ^2 
    + \sum _{k = 0} ^n \| \BPwk \| _{L ^2 (B _{8 ^{-n} / 4})} 
    + \sum _{k = 0} ^{n - 1} \| \BPvk \| _{L ^2 (B _{8 ^{-n} / 4})} \\
    &\qquad 
    + \| \omega \|  _{L ^3  (B _{8 ^{-n} / 4})} ^2 
    \| \grada \vn \| _{L ^6  (B _{8 ^{-n} / 4})} ^2 
    + \frac1\e \| \grada \omega (t) \| _{L ^2 (B _{8 ^{-n} / 4})} ^2
\end{align*}
has integral $\int _{-8 ^{-2n} / 16} ^0 \Phi (t) \d t \le C _n$.
Finally Gr\"onwall inequality gives 
\begin{align*}
    \|\grada \omega\| _{\LLLH (Q _{8 ^{-n-1}})} \le C _{n + 1}.
\end{align*}
\end{proof}

\subsection{Proof of the Local Theorem}

\begin{proof}[Proof of the Local Theorem \ref{thm:local}]
First, Proposition \ref{prop:v-L-inf-L-2} gives
\begin{align*}
    \| v \| _{\mathcal E (Q _1)} \le \eta
\end{align*}
where $\eta$ can be chosen arbitrarily small if we pick $\eta _1$ small. Next, by Proposition \ref{prop:v-L-inf}, we know
\begin{align*}
    \| v \| _{L ^\infty (Q _\frac12)} \le 1.
\end{align*}
These two steps implies \eqref{eqn:hypotheses-for-v-w}. As for \eqref{eqn:hypotheses-for-v-w-2}, $\curl w = \varpi$ in $B _1$, so we use interpolation in \eqref{eqn:integrability-varpi}: 
\begin{align*}
    \| \curl w \| _{L ^2 _t L ^\frac32 _x (Q _\frac12)} \le 
    \| \varpi \| _{L ^2 _t L ^{\frac{12}7} _x} \le \| \varpi \| _{\Lp1 \Lq3} ^\frac12
    \| \varpi \| _{\Lp2 \Lq4} ^\frac12 \le \eta
\end{align*}
$w$ is harmonic inside $B _1$, therefore
\begin{align*}
    \| w \| _{L ^\frac43 _t C ^n _x (Q _\frac12)} \lesssim _n \| w \| _{L ^\frac43 _t L ^1 _x (Q _1)} \le \eta
\end{align*}
due to \eqref{eqn:integrability-w} and $p _1 \ge \frac43$. Therefore, we can use Proposition \ref{prop:omega-energy-space} to obtain 
\begin{align*}
    \| \omega \| _{\mathcal E (Q _\frac18)} \le C.
\end{align*}

The next step is to use Proposition \ref{prop:omega-energy-space-n} iteratively. Suppose for $n \ge 1$ we know that 
\begin{align*}
    \| \grad ^{n - 1} \omega \| _{\mathcal E (Q _{8 ^{-n}})} \le c _{n}
\end{align*}
which is equivalent to \eqref{eqn:induction-condition}. Let $\vpn$ and $\vpns$ be a pair of smooth spatial cut-off functions, with 
\begin{align*}
    \ind{B _\frac{1}{8 ^n + 4}} \le \vpn \le \ind{B _\frac{1}{8 ^n + 3}}, \qquad 
    \ind{B _\frac{1}{8 ^n + 2}} \le \vpns \le \ind{B _\frac{1}{8 ^n + 1}},
\end{align*}
and set 
\begin{align*}
    v _n := -\curl \vpns \La \inv \vpn \omega, \qquad w _n = \vpn u - v _n.
\end{align*}
On the one hand, $\grad v _n$ is a Riesz transform of $\vp _n \omega$ up to lower order terms, so by the boundedness of Riesz transform we know 
\begin{align*}
    \| \grad ^{n + 1} v _n \| _{L ^2 (Q _{8 ^{-n} / 2})} \le \| \grad ^{n} \omega \| _{L ^2 (Q _{8 ^{-n}})} \le c _{n - 1}.
\end{align*}
On the other hand, we have similar boundedness estimates following Proposition \ref{prop:v-L-inf} as before,
\begin{align*}
    \| v _n \| _{L ^\infty (Q _{8 ^{-n} / 2})} \le 1.
\end{align*}
$w _n$ is harmonic in $B _{\frac1{8^n + 4}}$, so we also have
\begin{align*}
    \| w _n \| _{L ^\frac43 _t C ^{n + 1} _x (Q _{8 ^{-n} / 2})} \lesssim _n \| w _n \| _{L ^\frac43 _t L ^1 _x (Q _{\frac1{8^n + 4}})} \le \eta.
\end{align*}
Therefore, by Proposition \ref{prop:omega-energy-space-n}
\begin{align*}
    \| \grad ^n \omega \| _{\mathcal E (Q _{8 ^{-n - 1}})} \le C _n.
\end{align*}
By induction, we have 
\begin{align*}
    \| \grad ^n \omega \| _{\LLLH (Q _{8 ^{-n-1}})} \le C _n 
\end{align*}
for any $n$. By Sobolev embedding, this implies for any $n$,
\begin{align*}
    \| \grad ^n \omega \| _{L ^\infty (Q _{8 ^{-n-3}})} \lesssim 
    \| \grad ^n \omega \| _{L ^\infty _t L ^2 _x (Q _{8 ^{-n-3}})} +
    \| \grad ^{n + 2} \omega \| _{L ^\infty _t L ^2 _x (Q _{8 ^{-n-3}})} \le C _n .
\end{align*}
\end{proof}

\begin{appendix}
\section{Suitability of Solutions}

\begin{theorem}
Let $u$ be a suitable weak solution to the Navier-Stokes equation in $\R ^3$. That is, $u \in \LLLH$ solves the following equation
\begin{align}
    \label{eqn:u-original}
    \pt u + u \cdot \grad u + \grad P &= \La u, \qquad 
    \div u = 0
\end{align}
where $P$ is the pressure, and $u$ satisfies the following local energy inequality,
\begin{align}
    \label{eqn:u-original-suitability}
    \pt \uuhalf + \div \left(
        u \left(
            \uuhalf + P
        \right)
    \right) + |\grad u| ^2
    \le \La \uuhalf.
\end{align} 
Suppose $v \in \LLLH$ is compactly supported in space and solves the following equation,
\begin{align}
    \label{eqn:v-original}
    \pt v + \omega \cross v + \grad \RR (u \tensor v) &= \La v + \Cv, \qquad
    \div v = 0
\end{align}
where $\omega = \curl u$ is the vorticity, $\Cv \in L ^2 _t L ^\frac32 _{\loc,x}$ is a force term, and
\begin{align*}
    \RR = \frac12 \tr -\La \inv \div \div    
\end{align*}
is a symmetric Riesz operator. Moreover, suppose $v$ differs from $\vp u$ by 
\begin{align*}
    \vp u - v = w \in L ^\infty _t H ^1 _x \cap L ^2 _t H ^2 _x
\end{align*}
for some fixed $\vp \in C _c ^\infty (\R ^3)$.
Then $v$ satisfies the following local energy inequality,
\begin{align}
    \label{eqn:v-suitability}
    \pt \vvhalf + \div \left(v
        \RR (u \tensor v)
    \right) + |\grad v| ^2
    &\le \La \vvhalf + v \cdot \Cv.
\end{align}
\end{theorem}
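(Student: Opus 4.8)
The statement is a distributional differential inequality, and the only real obstruction is regularity: if $v$ were smooth one would simply dot \eqref{eqn:v-original} with $v$, use $v\cdot(\omega\cross v)=0$ and $\div v=0$ to rewrite $v\cdot\grad\RR(u\tensor v)$ as $\div(v\RR(u\tensor v))$, and read off \eqref{eqn:v-suitability} as an \emph{equality}. Since $v$ only lies in the energy space, the plan is to borrow regularity through the splitting $v=\vp u-w$, writing
\begin{align*}
    \vvhalf=\frac{|\vp u|^2}{2}-\vp u\cdot w+\frac{|w|^2}{2},
\end{align*}
and to establish a local energy relation for each of the three pieces, then add them so that the inequality sign enters only through the suitability of $u$.

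For the first piece, $\frac{|\vp u|^2}{2}=\vp^2\uuhalf$: multiply \eqref{eqn:u-original-suitability} by $\vp^2\ge 0$ and commute $\vp^2$ past $\pt$, $\div$ and $\La$. This yields a local energy inequality for $\vp u$ of the form
\begin{align*}
    \pt\frac{|\vp u|^2}{2}+|\grad(\vp u)|^2+\div\!\left(\vp^2 u\big(\uuhalf+P\big)\right)\le\La\frac{|\vp u|^2}{2}+\mathcal R,
\end{align*}
where $\mathcal R$ collects the lower-order terms created by the derivatives of $\vp$ — schematically $-\uuhalf\,\La\vp^2$, a multiple of $\grad\vp^2\cdot\grad\uuhalf$, the dissipation correction $|u|^2|\grad\vp|^2$, and the flux correction $u(\uuhalf+P)\cdot\grad\vp^2$ — all of which lie in $L^1_{\loc}$ because $u\in\LLLH\hookrightarrow L^{10/3}_{t,x}$ and $uP\in L^1_{\loc}$. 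For the third piece, subtract \eqref{eqn:v-original} from the equation that $\vp u$ satisfies — obtained by multiplying \eqref{eqn:u-original} by $\vp$ and rewriting with \eqref{vc1}, \eqref{vc3}, the commutators \eqref{cm1}--\eqref{cm4} and $\div u=\div v=0$, so that $\div w=\grad\vp\cdot u$ — to get a transport--diffusion equation for $w$; since $w$ is compactly supported and lies in $L^\infty_tH^1_x\cap L^2_tH^2_x$ it is a strong solution, and testing against $\zeta w$ (with $\zeta\ge 0$ smooth) gives an \emph{exact} local energy identity for $\frac{|w|^2}{2}$. For the cross term, $\frac{d}{dt}\!\int\zeta\,\vp u\cdot w=\langle\pt(\vp u),\zeta w\rangle+\langle\pt w,\zeta\vp u\rangle$ is again an identity; all four relations are derived rigorously after a spatial mollification of $u$, $v$, $w$ followed by passage to the limit.

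Adding the inequality for $\frac{|\vp u|^2}{2}$, subtracting the identity for $\vp u\cdot w$, and adding the identity for $\frac{|w|^2}{2}$ reproduces $\vvhalf$ on the left and preserves the inequality sign, since only the first relation is an inequality. The substance — and the main obstacle — is the bookkeeping: one must verify that all pressure contributions ($\vp^2 uP$ from the flux, and the $\vp\grad P$ terms arising from the cross term, where $\div w=\grad\vp\cdot u$ is used) cancel completely, and that all the non-gradient nonlinear contributions ($\vp^2 u\uuhalf$, the $u\cdot\grad u$-type terms, $\omega\cross v$, and the commutators $\BB$, $\BL$, $\BW$) recombine — through exactly the algebra that produced \eqref{eqn-v-final} from \eqref{eqn:ns}, using that $\omega\cross v+\grad\RR(u\tensor v)$ is precisely the localized, Helmholtz-projected form of $u$'s nonlinearity acting on the $v$-part — into $\div(v\RR(u\tensor v))$, the Laplacian flux $\La\vvhalf$, the dissipation $|\grad v|^2$ (this uses the algebraic identity $|\grad v|^2=|\grad(\vp u)|^2-2\grad(\vp u):\grad w+|\grad w|^2$, whose three terms come from the three energy relations), and the forcing $v\cdot\Cv$, with every remaining term cancelling. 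The limiting argument and the convergence of each pairing are controlled by $u\in L^{10/3}_{t,x}$, $\grad u\in L^2_{t,x}$, the local integrability of $P$, $\Cv\in L^2_tL^{3/2}_{\loc,x}$, and the stated $H^1$--$H^2$ regularity of $w$; this, together with the cancellation bookkeeping, is where essentially all of the work resides.
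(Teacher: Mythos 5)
Your plan is the paper's: decompose $\vvhalf=\tfrac{|\vp u|^2}{2}-\vp u\cdot w+\tfrac{|w|^2}{2}$, derive an inequality for the first piece by multiplying the suitability inequality of $u$ by $\vp^2$, and derive exact identities for the other two by exploiting the $L^\infty_tH^1_x\cap L^2_tH^2_x$ regularity of $w$, so that the inequality sign is inherited only from the suitability of $u$. The paper packages the two exact pieces into a single polarized identity for $\tfrac{|\vp u|^2-|v|^2}{2}$, obtained by pairing the $\vp u$- and $v$-equations against $w$ and the $w$-equation against $\vp u$ and $v$, and it works from the start in the $\RR$-formulation $\uuhalf+P=\RR(u\tensor u)$ so that no separate pressure-cancellation argument is needed; both are minor organizational differences from your splitting. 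The substantive caveat is that you defer, rather than perform, the cancellation bookkeeping --- verifying that the cutoff commutators and the $\Cu$-terms assemble into exactly $\La\vvhalf$, $\div(v\,\RR(u\tensor v))$, $|\grad v|^2$ and $v\cdot\Cv$ with nothing left over \emph{is} the proof, and the paper carries it out explicitly.
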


\begin{proof}
It is well-known that the pressure $P$ can be recovered from $u$ by
\begin{align*}
    P = -\La \inv \div \div (u \tensor u).
\end{align*}
Since
\begin{align*}
    u \cdot \grad u + \grad P 
    &= \grad \uuhalf + \omega \cross u - \grad \La \inv \div \div (u \tensor u) \\
    &= \omega \cross u + \grad \RR (u \tensor u),
\end{align*}
The Navier-Stokes equation \eqref{eqn:u-original} can be rewritten as
\begin{align}
    \label{eqn:u}
    \pt u + \omega \cross u + \grad \RR (u \tensor u) &= \La u, 
\end{align}
and local energy inequality \eqref{eqn:u-original-suitability} can be rewritten as
\begin{align}
    \label{eqn:u-suitability}
    \pt \uuhalf + \div \left(
        u \RR (u \tensor u)
    \right) + |\grad u| ^2
    \le \La \uuhalf,
\end{align}

First, multiply \eqref{eqn:u} by $\vp$, 
\begin{align*}
    \pt \vp u + \omega \cross \vp u + \grad \RR (u \tensor \vp u) = \La (\vp u) + [\grad \RR, \vp] (u \tensor u) + [\vp, \La] u.
\end{align*}
Denote
\begin{align*}
    \Cu = [\grad \RR, \vp] (u \tensor u) + [\vp, \La] u
\end{align*}
for these commutator terms. Subtracting the equation of $v$ from this equation of $\vp u$, we will have the equation for $w$. In summary,
\begin{align}
    \label{eqn:vpu}
    \pt \vp u + \omega \cross \vp u + \grad \RR (u \tensor \vp u) &= \La (\vp u) + \Cu, \\
    \label{eqn:v} 
    \pt v + \omega \cross v + \grad \RR (u \tensor v) &= \La v + \Cv, \\
    \label{eqn:w}
    \pt w + \omega \cross w + \grad \RR (u \tensor w) &= \La w + \Cu - \Cv.
\end{align}
Recall from \cite{Vasseur2010} that $\La u \in L ^{\frac43-\e} _{\loc (t, x)}$. Since $\La w \in L ^2 _{t, x}$, we have $\La v \in L ^{\frac43-\e} _{\loc (t, x)}$. Therefore, we can multiply \eqref{eqn:vpu} and \eqref{eqn:v} by $w$, and \eqref{eqn:w} by $\vp u$ and $v$,
\begin{align}
    \label{eqn:sum-1}
    w \cdot \pt (\vp u) 
    + w \cdot \omega \cross \vp u 
    + w \cdot \grad \RR (u \tensor \vp u) 
    &= 
    w \cdot \La (\vp u) 
    + w \cdot \Cu,
    \\
    w \cdot \pt v 
    + w \cdot \omega \cross v 
    + w \cdot \grad \RR (u \tensor v) 
    &= 
    w \cdot \La v 
    + w \cdot \Cv
    \\
    \vp u \cdot \pt w 
    + \vp u \cdot \omega \cross w 
    + \vp u \cdot \grad \RR (u \tensor w) 
    &=
    \vp u \cdot \La w 
    + \vp u \cdot (\Cu - \Cv).
    \\
    \label{eqn:sum-4}
    v \cdot \pt w 
    + v \cdot \omega \cross w 
    + v \cdot \grad \RR (u \tensor w) 
    &=
    v \cdot \La w 
    + v \cdot (\Cu - \Cv).
\end{align}
Now take the sum of \eqref{eqn:sum-1}-\eqref{eqn:sum-4}. $\pt$ terms are
\begin{align*}
    &\vp u \cdot \pt w + w \cdot \pt (\vp u) + v \cdot \pt w + w \cdot \pt v \\
    &\qquad = \pt (\vp u \cdot w) + \pt (w \cdot v) \\
    &\qquad = \pt (|\vp u| ^2 - |v| ^2).
\end{align*}
$\omega \cross$ terms are
\begin{align*}
    w \cdot \omega \cross \vp u 
    + \vp u \cdot \omega \cross w 
    + w \cdot \omega \cross v 
    + v \cdot \omega \cross w 
    = 0.
\end{align*}
$\grad \RR$ terms are
\begin{align*}
    &
    w \cdot \grad \RR (u \tensor \vp u)
    + v \cdot \grad \RR (u \tensor w)
    \\
    &\qquad 
    + \vp u \cdot \grad \RR (u \tensor w)
    + w \cdot \grad \RR (u \tensor v) 
    \\
    &= 
    \div (w \RR (u \tensor \vp u))
    + \div (v \RR (u \tensor w)) 
    \\
    &\qquad
    + \div (\vp u \RR(u \tensor w))
    + \div (w \RR (u \tensor v)) \\
    &\qquad 
    - \div (w) \grad \RR (u \tensor \vp u)
    - \div (v) \grad \RR (u \tensor w)
    \\
    &\qquad 
    - \div (\vp u) \grad \RR (u \tensor w)
    - \div (\vp ) \grad \RR (u \tensor v) 
    \\
    &= 2 \div (\vp u \RR (u \tensor \vp u)
    - v \RR (u \tensor v))
    \\
    &\qquad 
    - (u \cdot \grad \vp) \left(
        \grad \RR (u \tensor \vp u)
        + \grad \RR (u \tensor w)
        + \grad \RR (u \tensor v) 
    \right)
    \\
    &= 2 \div (\vp u \RR (u \tensor \vp u)
    - v \RR (u \tensor v)) - 2 (u \cdot \grad \vp) \RR (u \tensor \vp u).
\end{align*}
Here we use $\div v = 0, \div (\vp u) = \div w = u \cdot \grad \vp$. $\La$ terms are
\begin{align*}
    &\vp u \cdot \La w + w \cdot \La (\vp u) + v \cdot \La w + w \cdot \La v \\
    &\qquad = \La (u \cdot w) - 2 \grad (\vp u) : \grad w + \La (v \cdot w) - 2 \grad v : \grad w \\
    &\qquad = \La (|\vp u| ^2 - |v| ^2) - 2 (|\grad (\vp u)| ^2 - |\grad v| ^2).
\end{align*}
Commutator terms are
\begin{align*}
    w \cdot \Cu
    + \vp u \cdot (\Cu - \Cv)
    + w \cdot \Cv
    + v \cdot (\Cu - \Cv)
    = 2 \vp u \cdot \Cu - 2 v \cdot \Cv.
\end{align*}
In summary, half the sum of these four identities \eqref{eqn:sum-1}-\eqref{eqn:sum-4} gives
\begin{align}
    \label{eqn:difference}
    &\pt \frac{|\vp u| ^2 - |v| ^2}{2} 
    + \div (\vp u \RR (u \tensor \vp u)
    - v \RR (u \tensor v))
    + |\grad (\vp u)| ^2 - |\grad v| ^2
    \\
    \notag
    &\qquad =
    \La \frac{|\vp u| ^2 - |v| ^2}{2} 
    + \vp u \cdot \Cu - v \cdot \Cv + (u \cdot \grad \vp) \RR (\vp u \tensor u).
\end{align}

Next, multiply local energy inequality of $u$ \eqref{eqn:u-suitability} by $\vp ^2$,
\begin{align}
    \notag
    &
    \pt \frac{|\vp u| ^2}{2} 
    + |\vp \grad u| ^2 
    + \div \left(
        \vp ^2 u \RR (u \tensor u)
    \right) 
    \\
    \notag
    &\qquad 
    \le \La \frac{|\vp u| ^2}{2} 
    + [\vp ^2, \La] \uuhalf 
    + [\div, \vp ^2] \left(
        u \RR (u \tensor u)
    \right), 
    \\
    \notag
    &
    \pt \frac{|\vp u| ^2}{2} 
    + |\grad (\vp u)| ^2 
    + \div \left(
        \vp u \RR (u \tensor \vp u)
    \right) \\
    \label{eqn:vpu-suitability}
    &\qquad 
    \le \La \frac{|\vp u| ^2}{2} 
    + [\vp ^2, \La] \uuhalf 
    + |u \tensor \grad \vp| ^2 
    + 2 (u \tensor \grad \vp) : (\vp \grad u)
    \\
    \notag
    &\qquad \qquad
    + [\div, \vp ^2] \left(
        u \RR (u \tensor u)
    \right) + \div (\vp u [\RR, \vp] (u \tensor u)).
\end{align}
The quadratic commutator terms in \eqref{eqn:vpu-suitability} are
\begin{align*} 
    & [\vp ^2, \La] \uuhalf 
    + |u \tensor \grad \vp| ^2 
    + 2 (u \tensor \grad \vp) : (\vp \grad u) \\
    &\qquad
    = [\vp ^2, \La] \uuhalf 
    + |u| ^2 |\grad \vp| ^2 
    + 2 \grad \vp \cdot \vp \grad u \cdot u \\
    &\qquad
    = -2 \grad(\vp ^2) \cdot \grad \uuhalf 
    - \La (\vp ^2) \uuhalf
    + |u| ^2 |\grad \vp| ^2 
    + 2 \grad \vp \cdot \grad u \cdot \vp u \\
    &\qquad= 
    -4 \vp \grad \vp \cdot \grad \uuhalf 
    - \frac12 \La (\vp ^2) |u| ^2 
    + |u| ^2 |\grad \vp| ^2 
    + 2 \grad \vp \cdot \grad u \cdot \vp u \\
    &\qquad= -2 \vp \grad \vp \cdot \grad u \cdot u- \vp \La \vp |u| ^2 \\
    &\qquad= \vp u \cdot (-2 \grad \vp \cdot \grad u - (\La \vp) u) \\
    &\qquad= \vp u \cdot [\vp, \La] u.
\end{align*}
The cubic commutator terms in \eqref{eqn:vpu-suitability} are
\begin{align*}
    &
    [\div, \vp ^2] \left(
        u \RR (u \tensor u)
    \right) 
    + \div (\vp u [\RR, \vp] (u \tensor u)) 
    \\
    &\qquad
    = 2 \vp \grad \vp \cdot u \RR (u \tensor u) 
    + \vp u \cdot \grad [\RR, \vp] (u \tensor u) 
    + \div (\vp u) [\RR, \vp] (u \tensor u)
    \\
    &\qquad
    = 2 \vp (u \cdot \grad \vp)  \RR (u \tensor u) 
    + \vp u \cdot \grad [\RR, \vp] (u \tensor u) 
    + (u \cdot \grad \vp) [\RR, \vp] (u \tensor u)
    \\
    &\qquad
    = 2 \vp (u \cdot \grad \vp)  \RR (u \tensor u) 
    + \vp u \cdot \grad [\RR, \vp] (u \tensor u) 
    \\
    &\qquad \qquad
    + (u \cdot \grad \vp) \RR (\vp u \tensor u)
    - (u \cdot \grad \vp) \vp \RR ( u \tensor u)
    \\
    &\qquad
    = \vp u \cdot \grad \vp \RR (u \tensor u) 
    + \vp u \cdot \grad [\RR, \vp] (u \tensor u) 
    + (u \cdot \grad \vp) \RR (\vp u \tensor u)
    \\
    &\qquad
    = \vp u \cdot [\grad, \vp] \RR (u \tensor u) 
    + \vp u \cdot \grad [\RR, \vp] (u \tensor u) 
    + (u \cdot \grad \vp) \RR (\vp u \tensor u)
    \\
    &\qquad
    = \vp u \cdot \left(
        [\grad, \vp] \RR - \grad [\vp, \RR]
    \right)
    (u \tensor u) 
    + (u \cdot \grad \vp) \RR (\vp u \tensor u)
    \\
    &\qquad
    = \vp u \cdot 
        [\grad \RR, \vp]
    (u \tensor u) 
    + (u \cdot \grad \vp) \RR (\vp u \tensor u).
\end{align*}
Therefore, local energy inequality for $\vp u$ can be simplified as
\begin{align*}
    &\pt \frac{|\vp u| ^2}{2} 
    + |\grad (\vp u)| ^2 
    + \div \left(
        \vp u \RR (u \tensor \vp u)
    \right) \\
    &\qquad
    \le \La \frac{|\vp u| ^2}{2} 
    + \vp u \cdot \Cu + (u \cdot \grad \vp) \RR (\vp u \tensor u).
\end{align*}
Subtracting \eqref{eqn:difference} from this, we obtain \eqref{eqn:v-suitability}.
\end{proof}

\end{appendix}

\bibliographystyle{alpha}
\bibliography{mathscinet}

\end{document}